\theoremstyle{plain}
\newtheorem{de}{Definition}[section]
\newtheorem{lem}[de]{Lemma}
\newtheorem{prop}[de]{Proposition}
\newtheorem{cor}[de]{Corollary}
\newtheorem{thm}[de]{Theorem}
\theoremstyle{definition}
\newtheorem{rem}[de]{Remark}
\newcommand{\supp}{\operatorname{supp}} 
\newcommand{\dist}{\operatorname{dist}}
\DeclareMathOperator{\curl}{curl}
\DeclareMathOperator{\tr}{tr}
\DeclareMathOperator{\DDiv}{Div}
\DeclareMathOperator{\ran}{ran}
\newcommand{\RR}{{\mathbb{R}}}
\newcommand{\NN}{{\mathbb{N}}}
\newcommand{\ZZ}{{\mathbb{Z}}}
\newcommand{\cF}{{\mathcal{F}}}
\newcommand{\cA}{{\mathcal{A}}}
\newcommand{\cB}{{\mathcal{B}}}
\newcommand{\cH}{{\mathcal{H}}}
\newcommand{\cU}{{\mathcal{U}}}
\newcommand{\ph}{\varphi}
\newcommand{\ep}{\varepsilon}
\newcommand{\sub}{\subseteq}
\newcommand{\hra}{\hookrightarrow}
\newcommand{\D}{\mathrm{d}}
\newcommand{\dd}{\,\mathrm{d}}
\newcommand{\e}{\mathrm{e}}
\newcommand{\ii}{\mathrm{i}}
\newcommand{\loc}{\mathrm{loc}}
\newcommand{\sym}{\mathrm{sym}}
\newcommand{\tdiv}{\mathrm{div}}
\newcommand{\tcurl}{\mathrm{curl}}
\newcommand{\co}{\mathrm{co}}
\newcommand{\cf}{\mathrm{cf}}
\newcommand{\cp}{\mathrm{cp}}
\newcommand{\lin}{\mathrm{lin}}
\newcommand{\nl}{\mathrm{nl}}
\newcommand{\ta}{\mathrm{ta}}
\newcommand{\ol}{\overline}
\newcommand{\wt}{\widetilde}
\newcommand{\E}{\RR^3_+}
\newcommand{\Fnorm}[2]{\|#2\|_{F^{#1}(\Omega)}}
\newcommand{\Fvarnorm}[2]{\|#2\|_{F^{{#1},0}(\RR^3_+)}}
\newcommand{\Hh}[1]{\cH^{#1}(\RR^3_+)}
\newcommand{\Hhn}[2]{\|#2\|_{\Hh{#1}}}
\numberwithin{equation}{section}
\newcommand{\beq}{\begin{equation}}
\newcommand{\eeq}{\end{equation}}
\begin{document}

\title{Local wellposedness of quasilinear Maxwell equations with absorbing boundary conditions}

\author{Roland Schnaubelt}
\author{Martin Spitz}

\address{Department of Mathematics,
Karlsruhe Institute of Technology, 76128 Karlsruhe, Germany.}
\email{schnaubelt@kit.edu}
\email{martin.spitz@kit.edu}
\thanks{We gratefully acknowledge financial support by the Deutsche Forschungsgemeinschaft (DFG)
through CRC 1173.}

\keywords{Nonlinear Maxwell system, absorbing or impedance boundary conditions,
local wellposedness, blow-up criterion, continuous dependence.}

\subjclass[2010]{35L50, 35L60, 35Q61.}


\begin{abstract}
In this article we provide a local wellposedness theory for quasilinear Maxwell equations with absorbing boundary
conditions in $\cH^m$ for $m \geq 3$. The Maxwell equations are equipped with instantaneous  nonlinear material laws
leading to a quasilinear symmetric hyperbolic first order system. We consider both linear and nonlinear absorbing boundary conditions.
We show existence and uniqueness of a local solution, provide a blow-up criterion in the  Lipschitz norm,
and prove the continuous dependence on the data. In the case of nonlinear boundary conditions we need a 
smallness assumption on the tangential trace of the solution.
 The proof is based on detailed apriori estimates and the regularity theory for the corresponding linear 
 problem which we also develop here.
\end{abstract}

\maketitle


\section{Introduction}\label{sec:intro}
 The Maxwell system is the foundation of electromagnetism and thus one of the core partial differential equations
 in physics. For nonlinear instantaneous material laws, it can be written as a symmetric hyperbolic system under natural
assumptions. On the full space $\RR^d$, for such systems  a  satisfactory local wellposedness theory in 
$\cH^s(\RR^d)$ for $s > 1+\frac{d}{2}$ is provided by Kato's work \cite{Ka}. On domains $G\subsetneq\RR^3$ the Maxwell 
system is characteristic, and with its standard boundary conditions  it does not fit into 
the classes of hyperbolic problems for which one has a local wellposedness theory in $\cH^3$. The available results are stated 
in Sobolev spaces of much higher order and with weights encoding a loss of derivatives in normal direction,  see~\cite{Gu} or \cite{Se}.
In the very recent papers \cite{Sp1} and \cite{Sp2} by one of the authors, an encompassing local wellposedness theory in $\cH^m$ with $m\ge 3$ 
was derived for the Maxwell system endowed with  perfectly conducting boundaries, and it has been extended to interface problems in \cite{SS}.

In this paper we treat the quasilinear Maxwell system \eqref{eq:maxwell0} with absorbing boundary conditions which occur if one has a finite, 
strictly positive conductivity at the boundary, see \cite{FM}.
We establish a comprehensive local wellposedness theory in $\cH^m$ with $m\ge 3$ for linear boundary conditions and also treat nonlinear ones 
under a smallness condition (which automatically holds in the linear case). Our result provides the 
framework to show global existence 
and exponential decay of the solutions if the initial data are small, see the companion paper \cite{PS} co-authored by one of us.

For such boundary conditions, local solutions in $\cH^3$ were already constructed in \cite{PZ} under a similar smallness assumption. However, 
neither uniqueness, nor blow-up criteria, nor the continuous dependence on data were  addressed in this paper. These results (and ours below) 
rely on a regularity  theory for the linearized non-autonomous problem. It seems to us that the corresponding estimates
in \cite{PZ} were not precise enough to show uniqueness of the nonlinear  problem and to treat its wellposedness theory. The crucial problem 
in this respect is to derive differentiability in normal direction to the boundary, whereas tangential regularity can be shown in much greater 
generality, see \cite{Ra2}. In \cite{CE} and \cite{MO} this difficulty was solved on the linear level by transforming the system in a 
non-characteristic one, but the resulting estimates do not fit to the fixed point argument for the nonlinear system, as already 
observed in \cite{PZ} concerning \cite{MO}. We note that \cite{MO}  deals with a far more general situation. 

In this work we study the Maxwell system
\begin{align} \label{eq:maxwell0}
  \begin{aligned}
\partial_t \theta_e(E,H)&= \curl H -\sigma_e(E,H)E -J_0, \qquad &&x\in G, \quad &t\ge t_0,\\
 \partial_t \theta_m(E, H)&= -\curl E, &&x\in G, &t\ge t_0, \\
  H\times \nu &= \nu\times (\zeta(E\times \nu) E\times \nu)+g, &&x\in \Sigma, &t\ge t_0, \\
 E(t_0)&=E_0,  \qquad H(t_0)=H_0, &&x\in G,
 \end{aligned}
\end{align}
for an initial time $t_0 \in \RR$, an open subset $G$ of $\RR^3$ with a smooth compact boundary $\Sigma$, and the unit outward normal $\nu$.
We look for the electric and magnetic fields $(E(t,x), H(t,x))\in\cU$, where $\cU\sub \RR^6$ is a fixed open convex set.

The material laws $\theta=(\theta_e,\theta_m):G\times \cU\to  \RR^6$ are differentiable and their derivative $\partial_{(E,H)} \theta$ is $C^m$ for some
$m\in \NN$ with $m\ge3$, and it is assumed to be symmetric and positive definite. The latter is a standard assumption already in the linear case and 
was also imposed in \cite{PZ},  for instance. It is true for isotropic nonlinearities and large classes of constitutive relations
arising in optics, see e.g.\ Example~2.1 in \cite{LPS}. We refer to \cite{Busch} and \cite{FM} for further background.
 The conductivities $\sigma_e$ and $\zeta$ are also of class $C^m$, and $\zeta$ is symmetric and 
positive definite. The given current densities $J_0$ and $g$ and the initial fields $E_0$ and $H_0$ are supposed to belong to $\cH^m$. 

Guided by the basic energy estimate \eqref{est:apriori-0}, we look for solutions 
\[ u=(E,H)\in \bigcap\nolimits_{j=0}^m C^j(\ol{J},\cH^{m-j}(G)^ 6)\]
having tangential traces in $\cH^m(J\times \Sigma)$, where $J=(t_0,T)$ for some $T>t_0$. The space of these functions is called $G^m_\Sigma$.
For such a solution the data and coefficients 
have to satisfy the compatibility conditions \eqref{eq:cc-nl}. Assuming them, in our main Theorem~\ref{thm:lwp}
we show that
\begin{enumerate}
	\item[(1)] the system~\eqref{eq:maxwell0} has a unique maximal solution in $G^m_\Sigma$ with $m \geq 3$,
	\item[(2)] blow-up can be characterized in the Lipschitz-norm, 
	\item[(3)] the solution depends continuously on the data in $\cH^m$.
\end{enumerate}

In the case of nonlinear boundary conditions we have to add a smallness assumption on the product 
$\ol{\kappa}\, |\partial_E \zeta(\cdot, E\times \nu)|$ for fields with $|E\times\nu |\le \ol{\kappa}$. So, either the boundary 
condition is close to be linear or the solution has to be uniformly small (as in \cite{PZ}). We also deal with
non-autonomous linear boundary conditions,  see \eqref{ass:main1}.
The smallness condition is enforced by the basic energy estimate \eqref{est:apriori-0}
which allows us to bound the  tangential traces of the solution in $L^2(J\times \Sigma)$ by the boundary data in the same norm, 
but with a constant which cannot be made small. This behavior reappears on higher regularity levels and spoils the fixed point 
argument if the boundary condition is nonlinear.  Still the situation is much better than for perfectly conducting boundaries 
where one may lose a derivative at the  boundary, cf.\ \cite{El}.

For linear material laws one can treat  nonlinear boundary conditions with bounded $\zeta$ even on an $L^2$--level
without a smallness condition, see e.g.\ \cite{ELN}. These results are based on the theory of  monotone operators and semigroups. 
In our setting  this seems to be impossible, also in view of blow-up examples in $\cH(\mathrm{curl})$, see~\cite{DNS}.

We follow the strategy of \cite{Sp1} and \cite{Sp2}. One freezes the solutions in the nonlinearities of the system
and solves the resulting non-autonomous linear problems via localization, duality and regularization 
with precise apriori estimates. Then local solutions are constructed via a contraction argument. 
Similar ideas had been used in \cite{PZ}, though there core parts (like the regularization procedure) were not worked out.
The improved blow-up condition and the continuous dependence on data require additional significant efforts.

We first rewrite the system \eqref{eq:maxwell0}  in 
the equivalent form of a standard hyperbolic system in Section~\ref{sec:aux}, where we also collect our notation. Moreover,
we describe the localization procedure. It is crucial for our arguments that in various steps we can partly decouple the normal direction
from the time and tangential ones. To achieve this, in the localization one has to keep the form of the boundary condition 
and the constant coefficients  in front of $\partial_3$. For perfectly conducting boundaries and interfaces
this has been discussed in \cite{Sp0} respectively \cite{SS}, so that we can focus below on the new boundary conditions.
The compatibility conditions for the   linear and nonlinear problems are  derived in \eqref{eq:cc-lin} and \eqref{eq:cc-nl}.
They differ from those in \cite{Sp1}  and \cite{Sp2} in several respects.

In Section~\ref{sec:est} we first solve the non-autonomous linear problem in $G_\Sigma^0$ with $L^2$--data and Lipschitz coefficients
and derive the basic $L^2$--estimate in Proposition~\ref{prop:L2}.
This result is known, see e.g.\ \cite{CE}, but it is hard to find complete proofs and
we need more precise information about the constants than given in e.g.\ \cite{CE}. 
So we give a sketch and also obtain a rather general uniqueness statement. The apriori estimates in $G_\Sigma^m$
are then proven inductively by combining bounds for normal derivatives and for those in  tangential and time directions.
Here and later on we can use the results in normal direction from \cite{Sp1} and \cite{Sp2} since they do not involve 
boundary conditions. (And so we can omit a few very lenghty and intricate proofs.) However, the tangential bounds lead to 
new terms which have to be estimated carefully, since the nonlinear boundary conditions lead to  coefficients at the boundary
with less integrability  than those in the interior. This fact causes the smallness condition mentioned above.

In Section~\ref{sec:lin} we then derive the main linear regularity Theorem~\ref{thm:reg} needed for the nonlinear theory. 
We again follow the procedure from \cite{Sp1} and employ different regularization procedures in normal, tangential
and time directions which have to be intertwined in a careful induction. 
We again have to deal with  new terms at the boundaries, but also various additional  difficulties arise 
because of the more complicated compatibility conditions due to the time-dependent boundary coefficient on the linear level.

Based on the linear theory, we can then perform the core fixed point argument in Theorem~\ref{thm:local}.
Relying on the reasoning from \cite{Sp2}, we can focus on the smallness conditions needed to deal with the semilinear
boundary conditions.  They allow us to absorb error terms in the crucial estimates.
In the last section we  derive our main local wellposedness Theorem~\ref{thm:lwp}.
It is based on auxiliary results preparing the improved blow-up condition and the continuous dependence on data.
Compared to \cite{Sp2}, we have to deal again with additional boundary terms, the new compatibility conditions and
the needed smallness assumptions.

\section{Auxiliary results and notation}\label{sec:aux}
Let $m\in\NN$ and $G\subseteq \RR^3$ be either an open set with a compact  boundary $\Sigma:=\partial G$ of 
class $C^{m+2}$ or $G=\RR^ 3_+=\{x\in\RR^ 3\,|\, x_3>0\}$. Its  outer unit normal is $\nu$.
We write $c$ and $C$ for generic positive 
constants, as well as $c(a,b,\dots)$ or $C(a,b,\dots)$ if they depend on $a,b,\dots$. The range of a map $v$
is denoted by $\ran(v)$, and $B(x,r)$ or $B_M(x,r)$ is the closed  ball in a metric space $M$ with center $x$ and radius $r$.
Let $t_0\in\RR$ be the initial time, where  we often take $t_0=0$ in view  of time invariance.
To control constants, we partly fix a time $T'>t_0$ and let $T\in(t_0,T')$.
For $J=(t_0,T)$, we set $\Omega=J\times \RR^ 3_+$, and $\Gamma= J\times (\RR^2\times \{0\}) \cong J\times \RR^2$.
Sometimes $J$ also denotes other open intervals. 

We often use the same symbols for spaces of scalar, vector 
or matrix valued  functions. Sobolev spaces are designated by $W^ {s,p}$ with $W^{s,2}=\cH^s$.
Spaces on $\Sigma$ are equipped with the surface measure,  written as $\D x$.
For $\gamma\ge0$ and $t\in \RR$ we set $e_{-\gamma}(t)=\e^{-\gamma t}$. We employ time-weighted norms such as
\[\|f\|_{\cH^m_\gamma(\Omega)}^2 = \sum_{|\alpha|\le m} \|e_{-\gamma} \partial ^\alpha f\|_{L^2(\Omega)}^2,\]
and denote the respective spaces by the subscript $\gamma$.

Let $v: G\to \RR^3 $ be sufficiently regular.
We write $\tr_n v$ for the trace of the normal component $v\cdot \nu$ on $\Gamma$, and
$\tr_t v$ for the tangential trace $v\times \nu$ on $\Gamma$. We further employ its rotated variant 
$\tr_\tau v = \nu \times (\tr_t v)$ which is the tangential component $\tr v-(\tr_n v) \nu$ of
the full trace $\tr v$. Note that the Euclidean norms of $\tr_t v$ and $\tr_\tau v$ coincide. 
For vector fields $u=(u^1, u^2):G\to\RR^6$ and $k\in\{t,\tau,n\}$ we further set $\tr_k u = (\tr_k u^1, \tr_k u^2)$.  
It is well known that the mappings 
\[ \tr_n : \cH(\tdiv)\to \cH^ {-1/2}(\Sigma) \qquad \text{and} \qquad
\tr_t : \cH(\tcurl)\to \cH^ {-1/2}(\Sigma)^ 3\]
are continuous, where their maximal domains $\cH(\tcurl)$ and $\cH(\tdiv)$ in $L^2(G)^3$
are endowed with the respective graph norm, see Theorems~IX.1.1+2 in \cite{DL}. 
  For sufficiently regular functions $w: J\times G \to \RR^l$  with $l\in\{3,6\}$ we set
  $(\tr_k w)(t)= \tr_k (w(t))$, see also the remarks at the beginning of Section~\ref{sec:est}.
We introduce
\begin{align*}
G^m &=G^m(J\times G) = \bigcap\nolimits_{j=0}^m C^j(\ol{J},\cH^{m-j}(G)^ 6) \qquad  \text{and}\\
\tilde G^m &=\tilde G^m(J\times G)= \bigcap\nolimits_{j=0}^m W^ {j,\infty}(J,\cH^{m-j}(G)^ 6).
\end{align*}
 Our solutions have extra trace regularity expressed by the space
\[ G^m_\Sigma =G^m_\Sigma(J\times G)
   = \{ u\in G^m(J\times G)\,|\, \tr_\tau u \in \cH^m (J\times \Sigma)^6\},\]
which is equipped with its canonical norm. In the fixed point argument we also need
the slightly larger one
\[ \tilde G^m_\Sigma  =\tilde G^m_\Sigma (J\times G) 
 = \{ u\in \tilde G^m(J\times G) \,|\, \tr_\tau u \in \cH^m (J\times \Sigma)^6\}.\]
 We also use the subscript $\Sigma$ if $G=\RR_+^3$.

To reformulate  \eqref{eq:maxwell0} as a standard first  order system, we 
write  $u = (E, H)= (u^1,u^2)$ for the unknowns and introduce the matrices and maps
\begin{align}\label{eq:Aj}
 J_1 &= \begin{pmatrix}
	  0 &0 &0 \\
	  0 &0 &-1 \\
	  0 &1 &0
       \end{pmatrix},
        \ \
 J_2 =  \begin{pmatrix}
         0 &0 &1 \\
         0 &0 &0 \\
         -1 &0 &0
        \end{pmatrix},
        \ \
 J_3 = \begin{pmatrix}
        0 &-1 &0 \\
        1 &0 &0 \\
        0 &0 &0
       \end{pmatrix},  \quad
    \!    f =\! \begin{pmatrix}
	- J_0 \\ 0  
     \end{pmatrix}\!,   \notag\\
A_j^{\co} &= \begin{pmatrix}
        0 & -J_j \\
        J_j &0 
       \end{pmatrix},
    \quad \theta=(\theta_e,\theta_m), \quad\chi = \partial_{u} \theta , \quad 
        \sigma = \begin{pmatrix}
      \sigma_e &0 \\
      0 & 0
     \end{pmatrix},  \\
    B_1&=  \begin{pmatrix}\tr_t &  0  \end{pmatrix}, \quad
    B_2=  \begin{pmatrix} 0 & \tr_t   \end{pmatrix}, \quad
    B(u)= B_2 -\nu\times  \zeta(B_1 u) B_1 \notag
\end{align}
for $j\in\{1,2,3\}$. Observe that $\curl = J_1 \partial_1 +  J_2 \partial_2  +  J_3 \partial_3$. 
With this notation the Maxwell system  \eqref{eq:maxwell0} becomes
\begin{align}\label{eq:maxwell}
\begin{aligned}
\chi(u(t))\partial_t u(t) +\sum_{j=1}^3 A_j^{\co}\partial_j u(t) + \sigma(u(t))u(t) &=f(t), 
   \quad &&x \in G, \hspace{0.5em} &t\ge t_0, \\
B(u(t))u(t)   &= g(t), &&x \in \Sigma, &t\ge t_0,\\
u(t_0)&= u_0 &&x \in G,
\end{aligned}
\end{align}
where $u_0:=(E_0, H_0)$.
A \emph{solution} $u$ on an interval $J$ (with $t_0 \in J$) to this system belongs to $G^m_\Sigma(J' \times G)$ for every 
compact interval $J' \subseteq J$,
$u(t)$ takes values in a closed subset of $\cU$  for each $t\in J$, and $u$ satisfies \eqref{eq:maxwell} for $t\in J$.

Let $\tilde{m}=\max \{m,3\}$ and $\cU\sub \RR^ 6$ be an open convex set. We write
 $V\Subset\cU$ for open subsets $V$ of $\cU$ with a compact closure $\ol{V}\sub \cU$.
We assume that the coefficient $\theta$  belongs to $C^{\tilde{m}}(G\times \cU, \RR^{6\times6})$  and that
\begin{align} \label{ass:main}
&\chi = \partial_{u} \theta\in C^{\tilde{m}}(G\times \cU, \RR^{6\times6}_{\sym}), \quad
  \sigma \in C^{\tilde{m}}(G\times \cU, \RR^{6\times6}), \quad 
  \zeta \in  C^{\tilde{m}}_\tau(T_{\cU}\Sigma, \RR^{3\times3}_{\sym}),\notag \\
  &\forall\; V\Subset\cU, \ \alpha\in \NN_0^9, \ |\alpha|\le \tilde{m}: 
   \sup_{G\times V} (|\partial^\alpha \chi| +|\partial^\alpha\sigma|)<\infty,
     \  \sup_{\Sigma\times V} |\partial^\alpha \zeta|<\infty, \notag \\
  &\chi,\zeta \ge \eta I>0,
 \end{align}
 for some number $\eta>0$, where $T_{\cU}\Sigma = \{(x,v) \,|\, x \in \Sigma , v \in B_1(x) \cU\}$. 
 Mainly to unify notation, here we allow for 
a larger class of `conductivities' than in \eqref{eq:maxwell0}.
The subscript $\tau$ means that $\zeta$ is tangential in the sense that  $\zeta\nu^\perp \sub \nu^\perp$.
 Actually we only need uniform positive definiteness on each set
 $G\times \ol{V}$, respectively $\Sigma\times \ol{V}$, but we impose the above condition to
 simplify the presentation a bit. 
 Below we also state a variant of these assumptions for linear boundary conditions.

Let $\hat {u}\in \tilde G^m_\Sigma$ with $m\ge 3$. We freeze the coefficients in the nonlinearities
setting  $b= \zeta(B_1 \hat u)$, $A_0= \chi(\hat u)$, $D=\sigma(\hat u)$, and
\[B(t) = B_2 -  \nu\times  b(t) B_1\]
for $t\in J$. 
Such coefficients belong to  the function spaces 
\begin{align*}
F^m&= F^m(J\times G) =\{A\in W^{1,\infty}(J\times G)^{k\times k}\,|\,\forall\;\alpha\in\NN_0^4
   \text{ with } 1\le |\alpha| \le m:\\
       &\qquad \qquad \qquad \qquad \qquad \partial^ \alpha A \in L^ \infty(J, L^2(G))\},\\
F^{m,0} &= F^{m,0}(G) =\{A \!\in \! L^{\infty}(G)^ {k\times k} \, | \, \forall \alpha \!\in \! \NN_0^3 \text{ with } 
  1 \le |\alpha| \le m:   \partial^ \alpha A\in L^2(G)\},
\end{align*}
which are  endowed with their natural norms.
The corresponding spaces for the domains $\Omega$, $\RR^3_+$, $J\times \Sigma$, 
$\Sigma$, $\partial \RR^3_+$,  and  $\Gamma$ are denoted analogously. For $b$ we need the space
\[F^m_\cH= F^m_\cH(J\times \Sigma)=  F^{m-1}_\tau(J\times \Sigma)\cap  \cH^m(J\times \Sigma)^ k\]
whose norm is given by 
\begin{align*} 
\|b\|_{ F^m_\cH(J\times \Sigma)}^2 &= \|b\|_{ F^{m-1}(J\times \Sigma)}^2 + [b]_{\cH^m(J\times \Sigma)}^2
\qquad \text{with}\\
 [b]_{\cH^m(J\times \Sigma)}^2 &=\sum_{\alpha\in\NN_0^4, |\alpha|=m} \|\partial^\alpha b\|_{L^2(J\times \Sigma)}^2,
\end{align*}
Here, $k\in\NN$ usually is $1$, $3$, or $6$.
If we drop $J$ in $F^m$, we refer to the subspace of maps being constant in time.
The subscript $\eta>0$ in any of these or other spaces means that the functions take values in symmetric matrices 
with lower bound $\eta>0$; whereas $\cp$ indicates that the maps are constant outside a compact subset of $\ol{J\times G}$, 
respectively  $\ol{G}$. 
(For bounded $G$ the latter subspace is equal to $F^m$ or $F^{m,0}$.) In Lemma~2.1 and Corollary~2.2 of 
\cite{Sp2} one can find detailed results concerning the mapping properties of $\chi$ 
and $\sigma$ in these spaces. Several variants of the product rules in $\tilde{G}^m$, $F^m$ and $\cH^m$
are shown in Lemma~2.1 of  \cite{Sp1} for $G$, which easily extend to the boundary.
(See also \cite{Sp0} for a more detailed presentation.)  In the context of  $F^{m}_\cH(J\times \Sigma)$ 
new issues arise for the terms of highest order, which are discussed in the relevant parts of the proofs.

We also look at the case of linear non-autonomous boundary conditions assuming that
\begin{align} \label{ass:main1}
&\chi \in C^{\tilde{m}}_\eta(G\times \cU, \RR^{6\times6}_{\sym}), \quad
  \sigma\in C^{\tilde{m}}(G\times \cU, \RR^{6\times6}), \quad 
 \zeta=b \in  F^{\tilde{m}}_{\eta,\tau}(\RR_+\times \Sigma),\notag \\
  &\forall\; V\Subset\cU, \ \alpha\in \NN_0^9, \ |\alpha|\le \tilde{m}: 
  \sup\nolimits_{G\times V} (|\partial^\alpha \chi| +|\partial^\alpha\sigma|)<\infty,
 \end{align}
 for some number $\eta>0$.

Let $A_0\in F_{\eta}^ m(J\times G)$, $D\in F^m(J\times G)$, 
$b\in F_{\cH,\eta}^m(J\times \Sigma)$, $f\in \cH^m(J\times G)$, $g\in \cH^m(J\times \Sigma)$ 
with $g\cdot \nu=0$, and $v_0\in\cH^m(G)$. We then look for a solution $v\in G^m_\Sigma(J\times G)$ of the linearized 
problem 
\begin{align}\label{eq:maxwell-lin}
\begin{aligned}
A_0(t)\partial_t v(t) +\sum_{j=1}^3 A_j^{\co}\partial_j v(t) + D(t)v(t) &=f(t), \qquad &&x \in G, \quad &t\ge t_0,\\
B(t)v(t)    &= g(t), &&x \in \Sigma, &t\ge t_0, \\
v(t_0)&= v_0, &&x \in G. 
\end{aligned}
\end{align}

We solve the system \eqref{eq:maxwell-lin} via localization, proceeding as in \cite{SS}, \cite{Sp0} and \cite{Sp1}.
To this aim, we cover $\ol{G}$ with connected open 
sets $U_0, U_1, \dots, U_N$ where ${U_0}\sub G$ and $\Sigma\sub U_1\cup \dots \cup U_N$. 
For each $i\in\{1,\dots, N\}$ we fix a $C^{m+2}$--diffeomorphism 
$\ph_i$ from $U_i$ onto an open subset $V_i$ of $B(0,1)$ which maps  $\Sigma\cap U_i$ onto $\{y\in V_i\,|\, y_3=0\}$ 
and $G\cap U_i$ onto $\{y\in V_i\,|\, y_3>0\}\sub \RR^3_+$. The resulting composition operators are denoted by
\[ \Phi_i: L^2(U_i) \to L^2(V_i); \ \ v\mapsto v\circ \ph^{-1}_i, \qquad 
  \Phi_i^ {-1}: L^2(V_i) \to L^2(U_i); \ \ v\mapsto v\circ \ph_i.\]
We use the same notation for the induced maps  on Sobolev spaces, and also for the spaces on the domains
$J\times U_i$ or $J\times V_i$. The extension by 0 or restrictions of a function $v$ are also denoted by $v$.

Let $\{\theta_i \,|\,i\in \{0,1,\dots, N\}\}$ be a smooth partition of unity subordinate to the sets $U_i$, and 
$\sigma_i$ (resp.\ $\omega_i$) be test functions in $U_i$ (resp.\ $V_i$) which are equal to 1 on $\supp \theta_i$
(resp.\ on $K_i:= \ph_i (\supp \sigma_i$)). We can find a constant $\tau\in(0,1)$ and an index $z(i)\in\{1,2,3\}$
with $|\partial_{z(i)}\ph_{i,3}|\ge \tau$ for each $i$, see e.g.\ Lemma~5.1 of \cite{Sp0}, where $\ph_{i,3}$ is the third 
component of $\ph_i$. We assume that  $z(i)=3$. The other cases are treated analogously, cf.\ Section~5 of
\cite{Sp0} and Section~3 of~\cite{SS}. Let $i\in\{1,\dots,N\}$.

The usual localization procedure leads to a first order system on $\RR_+^3$ with variable coefficient matrices,
see e.g.\ (2.5) in \cite{Sp1}.  For our analysis it is important to keep the constant matrix  $A_3^\co$ and the form
of the boundary condition. To this aim, we set
\[\beta_i= \omega_i \Phi_i (\partial_3\ph_{i,3})
  + (1-\omega_i)\frac{\partial_3 \ph_{i,3}}{|\partial_3 \ph_{i,3}|}(\ph^ {-1}_i(y_i))\]
for a fixed point $y_i\in V_i$. The second summand is not important since we mostly work on  $K_i$ where we have
$\beta_i= \Phi_i \partial_3\ph_{i,3}$. It is easy to check the lower bound $|\beta_i| \ge \tau$. 
We assume that 
$\beta_i\ge \tau$ as the other sign is handled in the same way, cf.\ Section~5 of \cite{Sp0} and Section~3 of~\cite{SS}. We then set 
\[ \hat R_i = \beta_i^ {-1/2} \begin{pmatrix} 1 & 0 & 0\\ 0 & 1& 0\\ 
    \omega_i \Phi_i(\partial_1 \ph_{i,3}) & \omega_i \Phi_i(\partial_2 \ph_{i,3}) & -\beta_i\end{pmatrix}, \qquad
     R_i = \begin{pmatrix} \hat R_i & 0 \\ 0& \hat R_i \end{pmatrix}, \]
and define the `localized' coefficients
\begin{align}\label{def:A-i}
A_0^i &= R_i\big(\omega_i \Phi_i A_0  + (1-\omega_i)\eta I\big)R_i^T,\notag \\
A_j^i &= R_i\Big(\omega_i \Phi_i \Big(\sum\nolimits_{k=1}^3 A_k^\co \partial_k\ph_{i,j} \Big) +(1-\omega_i)
     \frac{\partial_3 \ph_{i,3}}{|\partial_3\ph_{i,3}|}(\ph^{-1}_i(y_i))A_3^\co\Big)R_i^T,\notag\\
A_3^i &= A_3^ \co,\\
D^i &= \omega_i R_i\Phi_i D R_i^T - \sum\nolimits_{j=1}^3 A_j^i
\partial_j (R_i^T)^ {-1} R_i^T\notag
\end{align}
for $j\in\{1,2\}$ on $\RR^3_+$ as in Section~3 of \cite{SS} or Section~2 of \cite{Sp1}, where partly a different notation was used. 
We note that $A_0^i$ belongs to $F^m_{\eta,\cp} (J\times G)$ and $D^i$ to $F^m_{\cp} (J\times G)$, and that
their norms in these spaces are bounded by a constant  times the analogous norms of $A_0$ and $D$.
The maps $A_1^i$ and $A_2^i$ are contained in  the space
\begin{align*}
F^m_{\cf}(\RR^3_+)&=\Big\{ A\in F^m_{\cp}(\RR^3_+) \,\Big|\, \exists\; \mu_j \in F^m_{\cp}(\RR^3_+,\RR):
  A = \sum\nolimits_{j = 1}^3 A_j^\co  \mu_j\Big\},
\end{align*}
and they are dominated in the norm of $F^m$ by a constant only depending on $G$, the charts, and $\omega_i$.
We stress that the functions $\mu_j$ are scalar.

To deal with the boundary condition, we set $\kappa_i= - \nabla\ph_{i,3}\cdot \nu\in C^ {m+1}(\Sigma\cap U_i)$.
Observe that  $\nabla\ph_{i,3}=-\kappa_i \nu$ since $\nabla\ph_{i,3}$ is normal to $\Sigma$ 
and that $\kappa_i > 0$ on $\Sigma \cap U_i$ by the properties of $\ph_{i,3}$. We further define
\begin{align*}
B_0 &=\begin{pmatrix} 0&\nu_3 &-\nu_2\\ -\nu_3 & 0& \nu_1\\ \nu_2 & -\nu_1& 0  \end{pmatrix}, \qquad
B_0^ \co=\begin{pmatrix} 0 & -1 & 0\\ 1 & 0 & 0\\ 0 & 0 & 0 \end{pmatrix} =J_3, \\ 
 B_1^\co&=\begin{pmatrix} B_0^\co & 0  \end{pmatrix}, \qquad B_2^\co=\begin{pmatrix} 0 &B_0^\co  \end{pmatrix},
\end{align*} 
cf.\ \eqref{eq:Aj}.
Abusing notation, we identify $B_0$ with $\tr_t$ at $\Sigma$ and  $B_0^ \co$ with $\tr_t$ at $\partial \RR^3_+$
(where  $\nu= -e_3$). As in (3.12) of \cite{SS} or Section~2 of \cite{Sp1}, we then compute
\[ B_0^ \co =\hat R_i \Big(\omega_i \Phi_i(\kappa_i B_0) 
     + (1-\omega_i) \frac{\partial_3 \ph_{i,3}}{|\partial_3 \ph_{i,3}|}(\ph^ {-1}_i(y_i)) B_0^\co   \Big) \hat R_i^T\]
on $\RR^2\times\{0\}$.  For the transformed coefficients, we take further cut-offs $\tilde{\omega}_i\in C^\infty_c (V_i)$
which are equal to 1 on $\supp \omega_i$ and define the auxiliary maps
\begin{align*} 
\tilde{b}_i(t) &= \tilde\omega_i \Phi_i(\kappa_i^{-1} b(t)) +(1-\tilde \omega_i) \eta I,\\
C^i(t) &= B_0^\co \tilde{b}_i(t) \Phi_i(\kappa_i B_1) + \Phi_i(\kappa_i B_0)\tilde{b}_i(t)B_1^\co,\\
\Omega_i &=\begin{pmatrix} \omega_i I & 0\\ 0 & I\end{pmatrix}.
\end{align*}
We now introduce the localized boundary operators and coefficients
\begin{align}\label{def:B-i}
B^i(t)&= \hat{R}_i \Big[\omega_i \Phi_i(\kappa_i B(t)) \Omega_i  
         + (1-\omega_i) \frac{\partial_3 \ph_{i,3}}{|\partial_3 \ph_{i,3}|}(\ph^ {-1}_i(y_i))B_2^\co\\
       &\qquad  +\omega_i (1-\omega_i) \frac{\partial_3 \ph_{i,3}}{|\partial_3 \ph_{i,3}|}(\ph^ {-1}_i(y_i))C_i(t)
		+ (1-\omega_i)^2B_0^\co \tilde{b}_i(t)B_1^\co\Big] R_i^T,\notag \\
b_i(t)&= (\hat{R}_i^T)^{-1} \tilde{b}_i(t)\hat{R}_i^{-1}, \notag
\end{align}
on $\RR^2\times\{0\}$. One can then derive the  identity
\beq\label{eq:B-i} 
B^i(t) =B_2^\co +B_0^\co b_i(t) B_1^\co.
\eeq
For the reconstruction of the original boundary condition on $\Sigma$ from the localized ones it is crucial 
to note that $B_i(t) = \hat{R}_i \Phi_i(\kappa_i B(t)) R_i^T$ on $\omega_i^{-1}(\{1\})$.
 Redefining $\eta$ if necessary, we obtain that $b_i$ is contained in $F_{\cH,\eta,\cp}^m(\Gamma)$
and bounded in this norm by a constant times the norm of $b$ in $F_{\cH,\eta}^m(J\times \Sigma)$.

Let $h\in \cH^m(J\times G)$ and $v\in G^{m}(J\times G)$.  We introduce the transformed data
\begin{align}\label{def:data-i} 
v_0^i&=  (R_i^ T)^{-1} \Phi_i(\theta_i v_0), \qquad g^i = \hat{R}_i \Phi_i(\theta_i \kappa_i g),\\
f^i&= f^i(h,v) = R_i\Big(\Phi_i(\theta_i h) 
  + \Phi_i\Big(\sum\nolimits_{j=1}^3A_j^\co\partial_j\theta_i v\Big) \Big).\notag
\end{align}
These functions belong to $\cH^m(\RR^3_+)$, $\cH^m(\Gamma)$,
and $\cH^m(\Omega)$, with norms bounded by a constant times the corresponding norms of $v_0$, $g$, $h$, and $v$, 
respectively. (In the existence proof one has to construct a suitable map $h$ for a given $f$.)
Instead of \eqref{eq:maxwell-lin}, we are now looking at the linear system
\begin{align}\label{eq:maxwell-lin-i}
\begin{aligned}
A_0^ i(t)\partial_t v(t) +\sum_{j=1}^3 A_j^i\partial_j v(t) + D^i(t)v(t) &=f^i(t), \quad &&x \in \RR^3_+ \hspace{0.5em} &t\ge t_0, \\
B^i(t)v(t)    &= g^i(t), &&x \in \partial \RR^3_+ &t\ge t_0,\\
v(t_0)&= v_0^i &&x \in \RR^3_+,
\end{aligned}
\end{align}
for $i\in\{1,\dots,N\}$ with the operators and maps from \eqref{def:A-i}, \eqref{eq:B-i}, and 
\eqref{def:data-i}. Once we have established apriori estimates and the regularity theory for~\eqref{eq:maxwell-lin-i}, 
we obtain the corresponding assertions on $G$ by proceeding as in Section~5 of~\cite{Sp0} 
respectively Section~3 of~\cite{SS}. 
To that purpose, we also need the case $i=0$ which leads to a much simpler full space problem already treated
in \cite{Sp1}, for instance.
We put $\partial_0=\partial_t$ and define the hyperbolic operators
\begin{align} \label{def:L}
L(w)&= \chi(w)\partial_t  +\sum\nolimits_{j=1}^3 A_j^{\co}\partial_j + \sigma(w) \qquad \text{on } J\times G,\notag\\
L^\co(A_0, D)=L^\co &=A_0\partial_t  +\sum\nolimits_{j=1}^3 A_j^{\co}\partial_j  + D \qquad \text{on } J\times G,\notag\\
L^i(A_0,A_1, A_2,A_3^\co,D)= L^i&=\sum\nolimits_{j=0}^3 A_j^ i\partial_j  + D^ i \qquad \text{on } \Omega.
\end{align}
In the last operator and in \eqref{eq:maxwell-lin-i} we often omit the superscript $i$.

If $u$ or $v$ in $G^m$ solves one of the above linear or nonlinear Maxwell systems, we can differentiate 
the evolution equation and
the boundary condition $m-1$ times and then take the time trace at $t=t_0\in\ol{J}$. The resulting 
compatibility conditions on $\{t_0\} \times \Sigma$ are thus a necessary property for any sufficiently regular solution. From (2.1) of 
\cite{Sp1} and (2.9) of \cite{Sp2} we first recall several
important formulas relating time and space derivatives of solutions, where we assume conditions  \eqref{ass:main1},
respectively  \eqref{ass:main} for the  nonlinear boundary condition.

Take a time $t_0\in\ol{J}$, 
an inhomogeneity $f\in \cH^m(\Omega)$, and initial values $u_0,v_0\in\cH^m(\E)$. Let $p\in\{0,\dots,m\}$.
Assume that $v\in G^m(\Omega)$ solves \eqref{eq:maxwell-lin-i} without the boundary condition. 
Differentiating the evolution equation in time and dropping the superscript $i$, we deduce that 
this function satisfies
\begin{equation} \label{eq:S-lin}
 \partial_t^p v(t_0) = S_{m,p,A_j,D}(t_0,v(t_0),f),
\end{equation}
for all  $p \in \{0, \dots, m\}$, where the term
$S_{m,p}^\lin = S_{m,p, A_j,D}(t_0,u_0,f)$ is defined by
\begin{align} \label{def:S-lin}
 S_{m,0}^\lin &= u_0, \nonumber \\
 S_{m,p}^\lin&= A_0(t_0)^{-1} \Big[\partial_t^{p-1} f(t_0) 
  - \sum_{j = 1}^3 A_j \partial_j S_{m, p-1}^\lin 
  - \sum_{l=1}^{p-1}\binom{p-1}{l} \partial_t^l A_0(t_0) S_{m, p-l}^\lin \nonumber \\
  &\qquad \quad- \sum_{l=0}^{p-1} \binom{p-1}{l} \partial_t^l D(t_0) S_{m,p-1-l}^\lin\Big], \quad p\ge1.
\end{align}
An analogous formula is true on $G$ if $v$ fulfills \eqref{eq:maxwell-lin}
and we replace $A_j$ by $A_j^\co$ for $j\in\{1,2,3\}$.

Next, let $u\in G^m(J\times G)$ satisfy \eqref{eq:maxwell}. We then obtain
\begin{equation} \label{eq:S-nl}
	\partial_t^p u(t_0) = S_{m,p,\chi, \sigma}(t_0, u(t_0), f) 
\end{equation}
for all $p \in \{0, \ldots, m\}$. Here we inductively define the maps
$ S_{m,p}^\nl =S_{m,p,\chi, \sigma}(t_0,u_0,f)$ by 
\begin{align} \label{def:S-nl}
 S_{m, 0}^\nl  &= u_0,\notag\\
 S_{m,p}^\nl &= \chi(u_0)^{-1} \Big[\partial_t^{p-1}f(t_0) - \sum_{j=1}^3 A_j^{\co} \partial_j 
  S_{m,p-1}^\nl 
 - \sum_{l=1}^{p-1} \binom{p-1}{l} M_1^l S_{m,p-l}^\nl  \notag\\
   &\qquad\qquad\qquad - \sum_{l=0}^{p-1} \binom{p-1}{l} M_2^l S_{m,p-1-l}^\nl\Big], \\
M_k^p &:= \sum_{1 \leq j \leq p} \sum_{\substack{\gamma_1,\dots,\gamma_j\in  \NN \\ 
   \sum \gamma_i = p}} \sum_{l_1, \ldots, l_j = 1}^6 C(p,\gamma)
	  (\partial_{y_{l_j}} \dots \partial_{y_{l_1}} \theta_k)(u_0)
	      \prod_{i=1}^j (S_{m,\gamma_i}^\nl)_{l_i}, \notag
\end{align}
where $p\ge1$,  $k \in \{1,2\}$, $\theta_1 = \chi$, $\theta_2 = \sigma$,  $M_2^0 = \sigma(u_0)$, 
and $C(p,\gamma)=C((p,0,0,0), \gamma_1, \dots, \gamma_j) $ is the constant from Lemma~2.1 of \cite{Sp2}. 

We have to estimate these maps. Lemma~2.3 of \cite{Sp1} shows the inequality
\begin{align}\label{est:S-lin}
\|S_{m,p,A_j,D}(&t_0, u_0,f)\|_{\cH^{m-p}(G)} \\
&\le c(r_0, \eta, m,p) \Big(\sum_{j = 0}^{p-1} \|\partial_t^j f(t_0)\|_{\cH^{m-1-j}(G)} 
  + \|u_0\|_{\cH^m(G)}\Big),\notag
\end{align}
 provided that $A_0\in F^{\tilde{m}}_\eta(J\times G)$, $A_1,A_2,A_3\in  F^{\tilde{m}}(G)$ 
and $D \in F^{\tilde{m}}(J\times G)$ and that the quantities 
\[ \|A_k(t_0)\|_{F^{\tilde{m}-1,0}(G)},  \|D(t_0)\|_{F^{\tilde{m}-1,0}(G)}, 
\|\partial_t^j A_0(t_0)\|_{\cH^{\tilde{m}-1-j}(G)},  \|\partial_t^j D(t_0)\|_{\cH^{\tilde{m}-1-j}(G)}\]
are bounded by  $r_0$ for $k\in\{0,1,2,3\}$ and $j\in \{1,\dots, m-1\}$. Here one can replace $G$ by $\RR^3_+$.
Similarly, Lemma~2.4 of \cite{Sp2}  says that
\begin{align}\label{est:S-nl}
\|S_{m,p,\chi, \sigma}(&t_0, u_0,f)\|_{\cH^{m-p}(G)} \\
&\le c(r_0, \eta,V, m,p) \Big(\sum_{j = 0}^{p-1} \|\partial_t^j f(t_0)\|_{\cH^{m-1-j}(G)} 
   + \|u_0\|_{\cH^m(G)}\Big),\notag
\end{align}
if $\chi$ and $\sigma$ fulfill \eqref{ass:main}, the range of $u_0$ is contained in $V\Subset \cU$  and 
the number in parentheses is less or equal $r_0$.  Lemma~2.4 of \cite{Sp2} also provides an analogous
Lipschitz estimate for arguments $(u_0, f)$ and  $(\tilde{u}_0, \tilde{f})$.

On the other hand, for $g\in \cH^m(J\times \Sigma)$ and $v\in G^m_\Sigma(J\times G)$ solving~\eqref{eq:maxwell-lin}  
we can differentiate the boundary condition in  \eqref{eq:maxwell-lin} up to $m-1$ times in time arriving at 
\begin{equation} \label{eq:cc-lin0}
 B(t_0) \partial_t^p v(t_0)  = \partial_t^p g(t_0)
     + \nu\times \sum_{k=1}^p \binom{p}{k} \partial^k_t b(t_0)  B_1 \partial_t^{p-k} v(t_0)
\end{equation}
on $\Sigma $ for all $ p\in\{1, \dots,m-1\}$. Replacing $B_1$ by $B_1^\co$ the same equation
is true on $\partial\RR^3_+$ and  a function $v\in G^m_\Sigma(\Omega)$ fulfilling \eqref{eq:maxwell-lin-i}.
Analogously, each solution $u\in G^m_\Sigma(J\times G)$ of \eqref{eq:maxwell} satisfies
\begin{equation} \label{eq:cc-nl0}
 B(u(t_0))  \partial_t^p u(t_0)= \partial_t^p g(t_0)
     +  \nu\times  \sum_{k=1}^p \binom{p}{k} \partial^k_t \zeta(B_1 u)(t_0)  B_1  \partial_t^{p-k} u(t_0)
\end{equation}
on $\Sigma$ for all $ p\in\{1, \dots,m-1\}$. To express the factors $\partial^k_t \zeta(B_1 u)(t_0)$,
as in \eqref{def:S-nl} we set
\[M_3^p = \sum_{1 \leq j \leq p} \sum_{\substack{\gamma_1,\ldots,\gamma_j \in \NN \\ 
   \sum \gamma_i = p}} \sum_{l_1, \ldots, l_j = 1}^2 C(p,\gamma)
	  (\partial_{y_{l_j}} \dots \partial_{y_{l_1}} \zeta)(B_1u_0)
	      \prod_{i=1}^j (B_1 S_{m,\gamma_i}^\nl)_{l_i} \]
	for $p\ge1$.      
Taking into account \eqref{eq:S-lin} and \eqref{eq:S-nl}, formulas \eqref{eq:cc-lin0} and \eqref{eq:cc-nl0}
lead to the equations 
\begin{align} \label{eq:cc-lin}
 B(t_0)  S_{m, p}^\lin &= \partial_t^p g(t_0)
     + \nu\times \sum_{k=1}^p \binom{p}{k} \partial^k_t b(t_0)  B_1 S_{m, p-k}^\lin,\\
B(u(t_0)) S_{m, p}^\nl &= \partial_t^p g(t_0)
     +  \nu\times  \sum_{k=1}^p \binom{p}{k} M^k_3 B_1 S_{m, p-k}^\nl,   \label{eq:cc-nl}
\end{align}
on $\Sigma$ for all $p \in \{0, \ldots, m-1\}$ respectively, which are called 
the \emph{compatibility conditions} of order $m$ for the systems \eqref{eq:maxwell-lin},  \eqref{eq:maxwell-lin-i}, 
respectively \eqref{eq:maxwell}. (For $p=0$ the sums are omitted.)

\section{Linear apriori estimates}\label{sec:est}
We first state the basic well-posedness result of the localized linear problem \eqref{eq:maxwell-lin-i} 
on the regularity level $m=0$. In particular, the data $f$, $g$ and $u_0$ belong to $L^2$.  If we have 
a solution $u\in L^2(\Omega)$, the evolution equation implies that $u$ is contained
in $\cH^1(J,\cH^{-1}(\RR_+^3))$. The initial condition thus makes sense in $\cH^{-1}(\RR_+^3)$. 
Moreover, the tangential trace can be extended from regular functions on $\Omega$ to 
those $u\in L^2(\Omega)$ with $Lu\in  L^2(\Omega)$ yielding a distribution $\tr_\tau u$ in $\cH^ {-1/2}(\Gamma)$,
see e.g.\ Remark~2.14 of \cite{Sp0}. The boundary condition can thus be understood as an equation 
in $\cH^ {-1/2}(\Gamma)$. We put $\DDiv A= \sum_{j=0}^3 \partial_j A_j$.

\begin{prop}\label{prop:L2}
Let $t_0=0$, $A_j=A_j^T\in W^{1,\infty}(\Omega)^ {6\times 6} $, $A_0\ge \eta I$, $A_3=A_3^\co$, 
$D\in L^\infty(\Omega)^ {6\times 6}$, $b=b^T\in L^\infty_\tau (\Gamma)^{3\times3}$ with $A_0,b\ge \eta I$, 
$u_0\in L^ 2(\RR^3_+)^6$, $f\in L^2(\Omega)^ 6$, and $g\in L^2(\Gamma)^3$ with $g\cdot \nu =0$. Then there is 
a unique solution $u\in L^2(\Omega)^6$ of \eqref{eq:maxwell-lin-i}. Moreover, $u$ belongs to 
$C(\ol{J},L^2(\RR^3_+)^6)$, $\tr_\tau u$ to  $L^2(\Gamma)^6$, and they satisfy equation \eqref{eq:energy} and the estimate
\begin{align}\label{est:apriori-0}
\e^ {-2\gamma T}\,\|u(T)&\|^2_{L^2(\RR_+^3)} + \gamma \, \|u\|_{L^2_\gamma(\Omega)}^ 2 
        + \|\tr_\tau u\|_{L^2_\gamma(\Gamma)}^ 2\\
&\le c\,\big(\|A_0(0)\|_{L^\infty(\RR^3_+)} \, \|u_0\|^ 2_{L^2(\RR_+^3)} + \tfrac1\gamma\, \|f\|_{L^2_\gamma(\Omega)}^2
      +\|g\|_{L^2_\gamma(\Gamma)}^ 2\big)\notag
\end{align}
for constants $c=c(\eta,\|b\|_\infty)\ge0$ and $\gamma_0(\eta,r)\ge1$ and
all $\gamma\ge \gamma_0(\eta,r)$ with $r:= \|D -\DDiv A/2\|_\infty$. 
 \end{prop}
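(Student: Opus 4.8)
\textbf{Proof strategy for Proposition~\ref{prop:L2}.}
The plan is to treat existence, uniqueness, the energy identity~\eqref{energy} and the estimate~\eqref{est:apriori-0} as a package built around a single formal computation: multiply the evolution equation by $e_{-2\gamma t} u$, integrate over $(0,T)\times\RR^3_+$, and integrate by parts in all four variables. Because the $A_j$ are symmetric and Lipschitz, the term $\sum_j \int A_j^\co \partial_j u\cdot u$ produces $\tfrac12\int (\DDiv A)\,u\cdot u$ from the interior plus a boundary term on $\Gamma$ coming only from $A_3^\co=J_3$-block, which by the choice of coordinates ($\nu=-e_3$) equals a multiple of $\int_\Gamma (u^2\times\nu)\cdot u^1$ — i.e.\ exactly the pairing controlled by the boundary condition $B^i u=g^i$, namely $\tr_t u^2 = \tr_t u^1$-term in $b$ plus $g^i$. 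Substituting the boundary condition and using $b\ge\eta I$ turns this boundary term into $\|\tr_\tau u\|_{L^2_\gamma(\Gamma)}^2$ up to a controlled cross term with $g$, which one absorbs by Young's inequality at the cost of the constant $c(\eta,\|b\|_\infty)$ that, as the introduction stresses, cannot be made small. The time derivative contributes $e^{-2\gamma T}\langle A_0(T)u(T),u(T)\rangle - \langle A_0(0)u_0,u_0\rangle + 2\gamma\int e_{-2\gamma t}\langle A_0 u,u\rangle + \int e_{-2\gamma t}\langle(\partial_t A_0) u,u\rangle$, and the $2\gamma$-term together with coercivity $A_0\ge\eta I$ dominates both the $\partial_t A_0$ error and the $Du$-term once $\gamma\ge\gamma_0(\eta,r)$ with $r=\|D-\DDiv A/2\|_\infty$. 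This yields~\eqref{energy} for smooth $u$ and then~\eqref{est:apriori-0}.

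The honest difficulty is not this a priori computation but making it rigorous at the $L^2$ level, and this splits into three tasks. First, \emph{existence}: I would obtain it by the classical duality/Galerkin route for symmetric hyperbolic systems with maximal dissipative boundary conditions — one checks that the boundary matrix $A_3^\co$ restricted to the space $\{B^i u=0\}$ is nonnegative (this is precisely the sign condition forced by $b\ge\eta I$ and the structure~\eqref{eq:B-i}), runs the estimate above for the adjoint problem (whose boundary condition is the annihilator, again maximal dissipative by the same sign computation), and concludes existence of a weak $L^2$ solution by the usual functional-analytic duality argument (Lax--Milligram / Hahn--Banach on the graph space), exactly as referenced in \cite{CE}; since the paper only wants a sketch here, I would cite this and emphasize the bookkeeping of constants. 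Second, \emph{trace regularity and the energy identity}: a weak $L^2$ solution has $Lu=f-Du\in L^2$, so $\tr_\tau u\in\cH^{-1/2}(\Gamma)$ a priori; to upgrade to $L^2(\Gamma)$ one runs the identity~\eqref{energy} — which at this stage one gets by a mollification-in-$(t,x')$ argument combined with the fact that $A_3^\co$ has constant coefficients so $\partial_3 u$ can be handled via the equation — and reads off that the boundary term is finite and equals $\|\tr_\tau u\|^2$ plus lower order, giving simultaneously $\tr_\tau u\in L^2(\Gamma)$ and $u\in C(\ol J,L^2)$. Third, \emph{uniqueness}: this is immediate from~\eqref{est:apriori-0} applied to the difference of two solutions with $f=0$, $g=0$, $u_0=0$, once the identity is justified for general $L^2$ solutions.

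The main obstacle I expect is the second task: justifying the integration-by-parts identity~\eqref{energy} for a merely $L^2$ solution, since $u$ is not regular enough for the boundary pairing $\int_\Gamma (u^2\times\nu)\cdot u^1$ to be classically defined. The standard remedy is a Friedrichs-mollifier argument \emph{tangential to the boundary} (convolving in $t$ and $x'=(x_1,x_2)$, which commutes with $A_3^\co\partial_3$ because that coefficient is constant), controlling the commutators $[A_0,\rho_\ep*\,]\partial_t$, $[A_j,\rho_\ep*\,]\partial_j$ for $j=1,2$, and $[b,\rho_\ep*\,]$ on the boundary by the Lipschitz/$L^\infty$ bounds on $A_j$ and the $L^\infty$ bound on $b$; here one genuinely uses that $A_j\in W^{1,\infty}$ and not just $L^\infty$. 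This is where the precise dependence of constants on $\eta$ and $\|b\|_\infty$ (and not on finer norms of $b$) gets pinned down, which is exactly the extra information the authors say is missing from \cite{CE}. I would present this as the technical core of the ``sketch'' and leave the duality existence argument and the elementary uniqueness deduction to citations and one-line remarks.
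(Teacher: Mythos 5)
Your strategy matches the paper's proof essentially step for step: the weighted energy identity obtained by symmetric integration by parts, with the boundary term rewritten via the boundary condition and $b\ge\eta I$ (yielding $c=c(\eta,\|b\|_\infty)$ and $\gamma_0$ depending on $\eta$ and $r=\|D-\DDiv A/2\|_\infty$), existence by a duality argument as in \cite{CE}, tangential $(t,x_1,x_2)$-mollification exploiting the constant coefficient $A_3^\co$ to justify the identity, the $L^2(\Gamma)$ trace and the estimate for merely $L^2$ solutions, and uniqueness read off from the resulting bound. The only difference is bookkeeping: the paper additionally routes through the problem on $J=\RR$ together with a causality argument and finite speed of propagation to settle the initial condition and the continuity in $C(\ol{J},L^2)$, a point your sketch treats more briefly but along compatible lines.
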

This result is essentially known, see e.g.\ Proposition~2.1 of \cite{CE}, so that we only indicate the main steps 
of the proof. Since the precise form of the constants is crucial for us, we fully show the estimate 
\eqref{est:apriori-0} for a solution $u\in G^0$ such that also $\partial_j u$ for $j\in\{0,1,2\}$ and 
$A_3^\co\partial_3 v$ belong to $L^2(\Omega)$. Set $v= e_{-\gamma} u$. The equation $Lu=f$ yields
\[ \int_\Omega Lv \cdot v \dd x\dd t = \int_\Omega (e_{-\gamma} f\cdot v - \gamma A_0 v\cdot v) \dd x\dd t.\]
Using the symmetry of $A_j$, we next compute  
\[ \int_\Omega Lv \cdot v \dd x\dd t  =\frac12\sum_{j=0}^3\int_\Omega
    \partial_j (A_jv \cdot v)\dd x\dd t  + \int_\Omega(D-\tfrac12\DDiv A)v\cdot v \dd x\dd t.\]
The first term  on the right-hand side is equal to
\[\frac12\int_{\RR^3_+}(A_0(T)v(T)\cdot v(T)-A_0(0)u_0\cdot u_0)\dd x
              -\frac12\int_\Gamma A_3^\co v\cdot v \dd x\dd t.\]
Basic vector algebra and the boundary condition in \eqref{eq:maxwell-lin-i} then lead to
\begin{align*} 
-\frac12\int_\Gamma A_3^\co v\cdot v \dd x\dd t 
  &= \frac12\int_\Gamma e_{-2\gamma} \, (u^1\cdot \tr_t u^2  - u^2\cdot \tr_t u^1 )\dd x\dd t\\
  &= \int_\Gamma e_{-2\gamma} \, u^1\cdot \tr_t u^2 \dd x\dd t\\
   &=\int_\Gamma e_{-2\gamma} \,  u^1\cdot (g -(b\tr_t u^1)\times \nu) \dd x\dd t\\
   &= \int_\Gamma e_{-2\gamma} \,  (g\cdot \tr_\tau u^1  + (b\tr_t u^1)\cdot\tr_t u^1)  \dd x\dd t.
 \end{align*}
 The assumptions now imply the basic estimate  
 \begin{align*}
 &\e^{-2\gamma T} \tfrac{\eta}2\,\|u(T)\|^2_{L^2(\RR_+^3)} +\gamma\eta \, \|u\|_{L^2_\gamma(\Omega)}^ 2 
   + \eta \,  \|\tr_\tau u^1\|_{L^2_\gamma(\Gamma)}^ 2\\
 &\;\;\le \tfrac12  \|A_0(0)\|_{L^\infty(\RR^3_+)} \, \|u_0\|^ 2_{L^2(\RR_+^3)} + r \, \|u\|_{L^2_\gamma(\Omega)}^2 
     + \| f \! \cdot \! u  \|_{L^1_{2\gamma}(\Omega)} + \| g \! \cdot \! \tr_\tau  u^1  \|_{L^1_{2\gamma}(\Gamma)}. \notag
  \end{align*}  
By means of the Cauchy-Schwarz inequality and choosing $\gamma\ge 4r/\eta$, 
one easily deduces \eqref{est:apriori-0} with $\tr_\tau u^1$ instead of $\tr_\tau u$.
The remaining summand can be recovered from the boundary condition $\tr_t u^2= g - (b\tr_t u^1 )\times \nu$.
We note that for $\gamma=0$ we also obtain  the equality
\begin{align} \label{eq:energy}
\int_{\RR^3_+} &\tfrac12A_0(T)u(T)\cdot u(T)\dd x + \int_\Gamma (b\tr_t u^1)\cdot\tr_t u^1\dd (t,x)\\
  &= \int_{\RR^3_+}\tfrac12 A_0(0)u_0\cdot u_0\dd x + \int_\Omega\big((\tfrac12\DDiv A-D)u\cdot u + u\cdot f\big)\dd (t,x)\notag \\
  &\qquad - \int_\Gamma g\cdot \tr_\tau u^1 \dd (t,x).\notag
\end{align}

The  other steps follow a standard procedure going back (at least) to \cite{Ra}, see also \cite{BGS} or \cite{CP}.
Estimate \eqref{est:apriori-0} and a duality argument yield a solution $u$ of \eqref{eq:maxwell-lin-i} in $L^2(\Omega)$. 
One can also show a variant of \eqref{est:apriori-0} for the interval $J=\RR$  without the terms at times $t=T$ 
and $t=0$, assuming analogous regularity assumptions. One thus obtains a solution $u$ in  $L^2_\gamma(\Omega)$ 
of \eqref{eq:maxwell-lin-i} on $J=\RR$ without an initial condition. Mollifiers in $(t,x_1, x_2)$ yield
aproximate solutions $u_n$, where  $A_3^\co \partial_3 u_n$ belongs to $L_\gamma^2(\Omega)$ because of the 
evolution equation. Using the variant of \eqref{est:apriori-0} on  $J=\RR$ for $u_n$, we see that this estimate is 
also valid for solutions in $L^2_\gamma(\RR\times \RR^3_+)$ and that $\tr_\tau u$ is an element of 
$L^2_\gamma(\Gamma)$. 

As in Theorem~6.11 of \cite{CP} one next shows that the solution vanishes on $(-\infty,0)$ if the same is true for 
$f$ and $g$. For $u_0=0$,  again by mollification one can now construct a unique solution $u\in L^2_\gamma(\Omega)$ of \eqref{eq:maxwell-lin-i} 
 satisfying  $u\in C(\ol{J},L^2(\RR^3_+)^6)$, $\tr_\tau u\in L^2(\Gamma)^6$, \eqref{est:apriori-0}, and \eqref{eq:energy}. 
This fact also leads to the uniqueness statement in Proposition~\ref{prop:L2}. It thus remains to show that for $f=g=0$ the 
solution $u$ is contained in $C(\ol{J},L^2(\RR^3_+)^6)$ and fullfills $\tr_\tau u\in L^2(\Gamma)^6$, \eqref{est:apriori-0}, and \eqref{eq:energy}. 
In view of the estimate one only has to consider compactly supported $u_0$. In this case, the 
available full space result and the finite speed of propagation imply that $u$ is continuous in $L^2(\RR_+^3)$ for 
small times $t\ge0$. The result then follows by mollification.

\smallskip

We next establish higher order apriori estimates for solutions $u\in G^m$ of \eqref{est:apriori-0}, extending the 
approach of \cite{Sp1}. In the first step we treat the `tangential' derivatives $\partial^\alpha u$ with 
$\alpha\in\NN_0^4$ and $\alpha_3=0$. We use the space $\cH^m_\ta(\Omega)$ containing those functions $u\in 
L^2(\Omega)$ such that all such derivatives with $|\alpha|\le m$ belong to $L^2(\Omega)$, which is equipped with 
its natural norm. The space  $\cH^m_\ta(\E)$ is defined analogously.
The number $\gamma_0$ is taken from Proposition~\ref{prop:L2}, whereas $\delta_{m>2}$ is equal 
to 1 if $m\in\{3,4,\dots\}$ and zero if $m\in\{1,2\}$.

\begin{lem}\label{lem:tang-est}
Let $t_0=0$, $T\in (0,T')$, $\rho,\eta > 0$,  $r \geq r_0 > 0$, $m \in \NN$, and $\tilde{m}=\max\{m,3\}$. Take coefficients 
$A_0 \in F^{\tilde{m}}_{\eta}(\Omega)$, $A_1, A_2 \in F^{\tilde{m}}_{\cf}(\RR^3_+)$, 
$A_3 = A_3^{\co}$, $D \in F^{\tilde{m}}(\Omega)$, and 
$b\in F_{\cH,\eta}^{\tilde{m}}(\Gamma)$ satisfying
\begin{align*}
	&\Fnorm{\tilde{m}}{A_i} \leq r, \quad \Fnorm{\tilde{m}}{D} \leq r,\quad  \|b\|_{F^{\tilde{m}-1}(\Gamma)}\le r,
	\quad [b]_{\cH^{\tilde{m}}(\Gamma)}\le \rho, \\ 
&\max\{\Fvarnorm{\tilde{m}-1}{A_i(0)},\max_{1\leq j \leq m-1} \Hhn{\tilde{m}-1-j}{\partial_t^j A_0(0)}\} \leq r_0, \\
	&\max\{\Fvarnorm{\tilde{m}-1}{D(0)}, \max_{1 \leq j \leq m-1} \Hhn{\tilde{m}-1-j}{\partial_t^j D(0)}\} \leq r_0
\end{align*}
for all $i \in \{0, 1, 2\}$.  Choose data $u_0 \in \cH^m(\RR^3_+)$, $f \in \cH^m_\ta(\Omega)$, and 
$g \in \cH^m(\Gamma)$ with $g\cdot \nu =0$. Assume that the solution $u$ of~\eqref{eq:maxwell-lin-i} belongs 
to $G^{m}(\Omega)$. Then  it is also contained in  $G^{m}_\Sigma(\Omega)$ and we have
\begin{align} \label{est:tang}
  \sum_{\substack{|\alpha| \leq m \\ \alpha_3 = 0}} &\|\partial^\alpha u\|^2_{G^0_\gamma(\Omega)}  
    + \gamma\,\|u\|^2_{\cH^m_{\ta,\gamma}(\Omega)}  + \|\tr_\tau u\|_{\cH^m_\gamma(\Gamma)}^2\\
  &\leq  C_{m,0}^\ta \Big[\sum_{j = 0}^{m-1}\!\|\partial_t^j f(0)\|^2_{\cH^{m-1-j}(\RR_+^3)} 
  + \|u_0\|^2_{\cH^m( \RR^3_+)} +\delta_{m>2}\rho^2 \,\|B_1 u\|_{L^\infty_\gamma(\Gamma)}^2\Big]\notag \\
    &\qquad +  c\,\|g\|^2_{\cH^m_\gamma(\Gamma)} 
	+   \frac{C_m^\ta}{\gamma} \Big( \|f\|_{\cH^m_{\ta,\gamma}(\E)}^2 + \|u\|_{G^m_\gamma(\Omega)}^2 \Big)  \nonumber
\end{align}
for all $\gamma \geq \gamma_0$, where $C_m^\ta = C_m^\ta(\eta, r, T')$, $C_{m,0}^\ta = C_{m,0}^\ta(\eta, r_0,\|b\|_\infty)$, and $c=c(\eta,m,\|b\|_\infty)$.
If $b$ even belongs to $b\in F_{\eta}^{\tilde{m}}(\Gamma)^ {3\times3}$ with norm less or equal $r$, then one can 
set $\rho=0$ in the above inequality.
\end{lem}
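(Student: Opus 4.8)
The plan is to differentiate the system \eqref{eq:maxwell-lin-i} — with the coefficients $A_0,A_1,A_2,A_3^\co,D,b$ and data $f,g,u_0$ of the lemma and boundary operator $B(t)=B_2^\co+B_0^\co b(t)B_1^\co$ — in the `tangential' directions and to feed each differentiated equation into Proposition~\ref{prop:L2}. Fix $\alpha\in\NN_0^4$ with $\alpha_3=0$ and $|\alpha|\le m$, written $\alpha=(\alpha_0,\alpha',0)$ with $\alpha'\in\NN_0^2$, and apply $\partial^\alpha$ to the evolution equation and to $B(t)u=g$. Since $A_3^\co,B_0^\co,B_1^\co,B_2^\co$ are constant matrices, the Leibniz rule shows that $w:=\partial^\alpha u$ solves the same system with $f$ replaced by
\[f_\alpha=\partial^\alpha f-[\partial^\alpha,A_0]\partial_t u-\sum\nolimits_{j=1}^2[\partial^\alpha,A_j]\partial_j u-[\partial^\alpha,D]u,\]
with $g$ replaced by $g_\alpha=\partial^\alpha g-B_0^\co[\partial^\alpha,b]B_1^\co u$, and with $w(0)=\partial^{\alpha'}\partial_t^{\alpha_0}u(0)=\partial^{\alpha'}S_{m,\alpha_0,A_j,D}(0,u_0,f)$ by \eqref{eq:S-lin}. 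Because $\alpha_3=0$ no commutator with $A_3^\co$ occurs; this is exactly why the localization keeps $A_3^\co$ constant. Since $\ran B_0^\co=\nu^\perp$ on $\partial\RR^3_+$ and $g\cdot\nu=0$ we get $g_\alpha\cdot\nu=0$, while $\partial^\alpha f\in L^2(\Omega)$, $\partial^\alpha g\in L^2(\Gamma)$, $w(0)\in\cH^{m-|\alpha|}(\RR^3_+)\sub L^2(\RR^3_+)$, and the commutators lie in $L^2$, by the product rules for $F^m$, $\tilde G^m$, $\cH^m$ of Lemma~2.1 in \cite{Sp1} and their boundary variants (applicable since $\tilde m\ge3$). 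By the uniqueness part of Proposition~\ref{prop:L2}, $w$ is the solution provided there, hence satisfies \eqref{est:apriori-0}; the number $\gamma_0$ is unchanged since the operator is, and $T$ may be replaced by any $t\in\ol J$. Summing the resulting inequalities over the finitely many admissible $\alpha$, taking the supremum over $t$ in the endpoint terms, and using $\tr_\tau\partial^\alpha u=\partial^\alpha\tr_\tau u$, the left-hand side of \eqref{est:tang} appears; finiteness of the right-hand side (treated below) then gives $\tr_\tau u\in\cH^m(\Gamma)^6$, i.e.\ $u\in G^m_\Sigma(\Omega)$.

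\textbf{Initial data and interior commutators.} With $\|A_0(0)\|_{L^\infty}\le\|A_0(0)\|_{F^{\tilde m-1,0}(\RR^3_+)}\le r_0$ and $\|w(0)\|_{L^2}\le\|S_{m,\alpha_0,A_j,D}(0,u_0,f)\|_{\cH^{m-\alpha_0}(\RR^3_+)}$, estimate \eqref{est:S-lin} bounds the initial-data part of the right-hand side of \eqref{est:apriori-0} by $C(\eta,r_0,m)\big(\sum_{j=0}^{m-1}\|\partial_t^jf(0)\|^2_{\cH^{m-1-j}(\RR^3_+)}+\|u_0\|^2_{\cH^m(\RR^3_+)}\big)$, giving the $C_{m,0}^\ta$-terms of \eqref{est:tang}. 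For the interior commutators the product rules yield $\|[\partial^\alpha,A_0]\partial_t u\|_{L^2_\gamma(\Omega)}+\sum_{j=1}^2\|[\partial^\alpha,A_j]\partial_j u\|_{L^2_\gamma(\Omega)}+\|[\partial^\alpha,D]u\|_{L^2_\gamma(\Omega)}\le C(\eta,r,T')\|u\|_{G^m_\gamma(\Omega)}$; since these enter \eqref{est:apriori-0} with the prefactor $1/\gamma$, together with $\partial^\alpha f$ they produce the term $\frac{C_m^\ta}{\gamma}\big(\|f\|^2_{\cH^m_{\ta,\gamma}(\Omega)}+\|u\|^2_{G^m_\gamma(\Omega)}\big)$.

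\textbf{Boundary commutators: the main obstacle.} The contributions $\partial^\alpha g$ give $c\|g\|^2_{\cH^m_\gamma(\Gamma)}$. The core difficulty lies in $[\partial^\alpha,b]B_1^\co u=\sum_{0<\beta\le\alpha}\binom{\alpha}{\beta}\partial^\beta b\;\partial^{\alpha-\beta}B_1^\co u$ on $\Gamma$, since $b\in F^m_\cH(\Gamma)=F^{m-1}_\tau(\Gamma)\cap\cH^m(\Gamma)^k$ is less integrable than the interior coefficients. The critical term is the one of highest order, $|\beta|=|\alpha|=m$, namely $\partial^\alpha b\cdot(B_1^\co u)$: here $\partial^\alpha b$ is only controlled in $L^2(\Gamma)$ by $[b]_{\cH^m(\Gamma)}$, while $B_1^\co u=\tr_t u^1$ belongs to $L^\infty(\Gamma)$ via the trace embedding $\cH^m(\RR^3_+)\to\cH^{m-1/2}(\partial\RR^3_+)\hookrightarrow L^\infty(\partial\RR^3_+)$, which requires $m\ge3$; hence this term is bounded in $L^2_\gamma(\Gamma)$ by $C([b]_{\cH^m(\Gamma)})\,\|B_1u\|_{L^\infty_\gamma(\Gamma)}$, which is precisely the term $\delta_{m>2}\rho^2\|B_1u\|^2_{L^\infty_\gamma(\Gamma)}$ in \eqref{est:tang}. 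The remaining summands, $1\le|\beta|\le m-1$, carry $\partial^\beta b\in L^\infty(\Gamma)$ (for $|\beta|=1$) or $\partial^\beta b\in L^\infty(J,L^2(\Gamma))$, multiplied by a genuine derivative $\partial^{\alpha-\beta}B_1^\co u$ of the trace; by the product rules in $F^m_\cH(\Gamma)$, the trace theorem, and an interpolation of $\|\tr_\tau u\|_{\cH^{m-1}(\Gamma)}$ between $\|\tr_\tau u\|_{\cH^m(\Gamma)}$ (a small multiple of which is absorbed into the left-hand side) and $\|\tr_\tau u\|_{L^2(\Gamma)}$ (controlled by \eqref{est:apriori-0} applied to $u$ itself) — and, if needed, a downward induction on $m$ — these terms are absorbed once $\gamma$ is chosen large in terms of $\eta,m,\|b\|_\infty$. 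This yields \eqref{est:tang}. Finally, if $b\in F^{\tilde m}_\eta(\Gamma)$ with norm $\le r$, then $\partial^\alpha b\in L^\infty(J,L^2(\Gamma))$ also for $|\alpha|=m$, so the highest-order boundary commutator no longer requires the $L^\infty$-bound on $\tr_t u^1$ and is estimated like the lower-order ones; the same happens automatically for $m\le2$, where $\tilde m=3>m$ and $\delta_{m>2}=0$. The main obstacle throughout is precisely this highest-order boundary commutator: the weak integrability of $b$ forces the loss recorded by $\|B_1u\|_{L^\infty}$, which is the mechanism behind the smallness assumption needed in the nonlinear problem.
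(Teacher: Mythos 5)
Your overall scheme is the paper's: differentiate tangentially (exploiting that $A_3^\co$ and $B_j^\co$ are constant), apply Proposition~\ref{prop:L2} to each $\partial^\alpha u$, bound $u_{0,\alpha}$ via \eqref{est:S-lin}, put the interior commutators into the $\gamma^{-1}$-term, and isolate the top-order boundary commutator $\partial^\alpha b\cdot B_1^\co u$, which you correctly estimate by $\rho\,\|B_1u\|_{L^\infty_\gamma(\Gamma)}$ (this is exactly the paper's $\delta_{m>2}$-term, and your remark on the case $b\in F^{\tilde m}_\eta(\Gamma)$ and on $m\le 2$ is also the paper's). The gap is in your treatment of the intermediate boundary commutators $\partial^\beta b\,\tr_t\partial^{\alpha-\beta}u^1$ with $1\le|\beta|\le m-1$. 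Their coefficients are controlled only by $\|b\|_{F^{\tilde m-1}(\Gamma)}\le r$ (not by $\|b\|_\infty$), so any absorption must park the resulting $r$-dependent error on quantities that are allowed to carry $r$-dependent constants, namely the interior norms $\gamma\|u\|^2_{\cH^m_{\ta,\gamma}(\Omega)}$ and $\gamma^{-1}\|u\|^2_{G^m_\gamma(\Omega)}$. Your route instead interpolates $\|\tr_\tau u\|_{\cH^{m-1}(\Gamma)}$ between $\|\tr_\tau u\|_{\cH^m(\Gamma)}$ and $\|\tr_\tau u\|_{L^2(\Gamma)}$ and closes with \eqref{est:apriori-0} for $u$ itself. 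Two problems arise. First, the $\cH^m(\Gamma)$-trace norm you want to absorb is not known to be finite a priori — its finiteness is part of the conclusion — whereas the paper only ever absorbs the interior tangential norm, which is finite since $u\in G^m(\Omega)$ is assumed. Second, and decisively, to absorb a term of size $c(m,r)\ep\|\tr_\tau u\|^2_{\cH^m_\gamma(\Gamma)}$ into the left-hand side (which carries no factor $\gamma$, so "choosing $\gamma$ large in terms of $\eta,m,\|b\|_\infty$" cannot do it) you must take $\ep=\ep(r)$, and the complementary interpolation constant $C(\ep(r))$ then multiplies the $L^2$-level data via \eqref{est:apriori-0}, producing $C(r)\|u_0\|^2_{L^2}$ and $C(r)\|g\|^2_{L^2_\gamma(\Gamma)}$ on the right. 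This is strictly weaker than the assertion, which requires $C^\ta_{m,0}=C^\ta_{m,0}(\eta,r_0,\|b\|_\infty)$ and $c=c(\eta,m,\|b\|_\infty)$ in front of the data; and this constant structure is not cosmetic — it is exactly what the nonlinear fixed-point argument needs (see the remark following Corollary~\ref{cor:est}). The vague appeal to "a downward induction on $m$" does not repair this, since the interpolation constant $C(\ep(r))$ still lands on the data terms.

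The fix is the paper's sharpened trace inequality \eqref{est:trace-kappa} with $\kappa=\ep\gamma$: estimate $e_{-\gamma}\tr_t\partial^{\alpha-\beta}u^1$ (for $2\le|\beta|\le\tilde m-1$ in $L^2(J,\cH^{3/2})\hookrightarrow L^2(J,L^\infty)$ against $\partial^\beta b\in L^\infty(J,L^2)$, and analogously for $|\beta|=1$) so that the boundary commutator is dominated by
\[
c(m,r)\big[\ep\gamma\,\|u\|^2_{\cH^m_{\ta,\gamma}(\Omega)}+T'(\ep\gamma)^{-1}\|u\|^2_{G^m_\gamma(\Omega)}\big].
\]
The first summand is absorbed by the $\gamma$-weighted interior tangential term on the left after fixing $\ep=\ep(\eta,m,r)$, and the second goes into $C^\ta_m\gamma^{-1}\|u\|^2_{G^m_\gamma(\Omega)}$ with $C^\ta_m=C^\ta_m(\eta,r,T')$; no $r$-dependence ever touches the $u_0$-, $f(0)$- or $g$-terms, and the estimate holds already for $\gamma\ge\gamma_0(\eta,r)$ from Proposition~\ref{prop:L2}. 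With this replacement (and establishing $\tr_\tau\partial^\alpha u\in L^2(\Gamma)$ from Proposition~\ref{prop:L2} before any boundary-norm absorption, which also settles the circularity point), your argument coincides with the paper's proof.
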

\begin{proof}
Let $\alpha \in \NN_0^4$ with $|\alpha| \leq m$ and $\alpha_3 = 0$. We use the differential operator 
$L=L^i(A_0,A_1, A_2,A_3^\co,D)$ from \eqref{def:L}. Let $j\in\{0,1,2\}$. Exactly as in Lemma~3.2 
of \cite{Sp1} we derive the equations
\begin{align*} 
\begin{aligned}
L\partial^\alpha u &= f_\alpha, \qquad &&x\in\RR^3_+, \quad &t\in J,\\
\partial^\alpha u(0)&= u_{0,\alpha}, && x\in\RR^3_+,
\end{aligned}
\end{align*}
for the functions
\begin{align*}
  f_{\alpha} &= \partial^\alpha f - \sum_{j = 0}^2 \sum_{0 < \beta \leq \alpha} \binom{\alpha}{\beta} 
    \partial^\beta A_j \partial^{\alpha - \beta} \partial_j u
	- \sum_{0 < \beta \leq \alpha} \binom{\alpha}{\beta} \partial^\beta D \partial^{\alpha - \beta} u, \\
  u_{0,\alpha} &= \partial^\alpha u(0) = \partial^{(0,\alpha_1, \alpha_2, 0)} S_{m,\alpha_0,A_k,D}(0, u_0, f),
\end{align*}
 where we used $A_3 = A_3^{\co}$. Moreover, $f_\alpha$ is an element of $\cH^{m - |\alpha|}(\Omega)$ and  
 $u_{0,\alpha}$ of  $\cH^{m - |\alpha|}(\RR_+^3)$ satisfying
\begin{align} \label{est:f-u0-alpha}
 \|f_\alpha\|_{L^2_\gamma(\Omega)} &\leq 
                      \|f\|_{\cH^m_{\ta,_\gamma}(\Omega)} + c(m,r,T') \,\|u\|_{G^m(\Omega)},\\
  \|u_{0,\alpha}\|_{\cH^{m - |\alpha|}(\RR_+^3)} &\leq c(\eta,m,r_0)\Big(\sum_{k = 0}^{m-1} 
        \|\partial_t^k f(0)\|_{\cH^{m-1-k}(\E)} + \|u_0\|_{\cH^{m}(\RR_+^3)} \Big).\notag
\end{align}
In particular,  $\partial^{\alpha} u$ has a tangential trace in  $\cH^ {-1/2}(\Gamma)$. On the other hand,
  we can apply $\partial_j$ to $\tr_t \partial^{\alpha-e_j} u= (- \partial^{\alpha-e_j} u_2,  
  \partial^{\alpha-e_j} u_1, 0, - \partial^{\alpha-e_j} u_5,  \partial^{\alpha-e_j} u_4,0)$ in $\cH^{-1/2}(\Gamma)$ 
  and obtain  $\partial_j \tr_t \partial^{\alpha-e_j} u= \tr_t \partial^{\alpha} u$ for $j\in \{0,1,2\}$.
The boundary condition in \eqref{eq:maxwell-lin-i} thus leads to the equation
\[ B \partial^{\alpha} u =g_\alpha:= \partial^{\alpha} g 
 + \nu\times\sum_{0< \beta\leq\alpha} \binom{\alpha}{\beta} \partial^\beta b \tr_t\partial^{\alpha - \beta} u^1\]
on $\Gamma$. We bound the terms of this sum in $L^2(\Gamma)$, at first for  $b\in F_{\cH,\eta}^{\tilde{m}}(\Gamma)$.
We use the trace theorem in the form
\beq\label{est:trace-kappa}
\|\tr v\|_{H^{1/2}(\partial\E)}^2 \le \kappa\,\|v\|_{\cH^1_{\ta}(\E)}^2 + c\kappa^{-1} \, \|\partial_3 v\|_{L^2(\E)}^2
\eeq
for any $\kappa>0$, which can easily be derived using the Fourier transform.

Let $2\le |\beta|\le \tilde{m}-1$. Then  the norm of $\partial^\beta b$ in $L^\infty(J,L^2(\partial \RR^3_+))$ 
is less or equal $\|b\|_{F^{\tilde{m}-1}}$, whereas  $v:=e_{-\gamma}\tr_t\partial^{\alpha -\beta} u^1$ 
can be estimated in  $L^2(J, \cH^{3/2}(\partial \RR_+^3))\hra L^2(J,L^\infty(\partial \RR^3_+))$ and thus,  after taking squares,  by 
\[\ep \gamma\,\|u\|_{\cH^m_{\ta,\gamma}(\Omega)}^2 + c(\ep \gamma)^{-1}\,\|u\|_{\cH^m_{\gamma}(\Omega)}^2
   \le \ep \gamma\,\|u\|_{\cH^m_{\ta,\gamma}(\Omega)}^2 + cT'(\ep \gamma)^{-1}\,\|u\|_{G^m_{\gamma}(\Omega)}^2,\]
using \eqref{est:trace-kappa} with $\kappa=\ep \gamma$ and any $\ep>0$.  If $|\beta|=1$ we argue similarly, invoking the spaces 
$L^\infty(J,\cH^1(\partial\RR^3_+))\hra L^\infty(J,L^4(\partial\RR^3_+))$ for $b$ and $L^2(J,\cH^1(\E))$ for $v$.

The cases $m\in\{1,2\}$ are thus settled.  It remains to consider the
case $m\ge 3$ and $\alpha=\beta$. We can now only use the $L^2$-norm 
of  $\partial^\beta b$ and bound $v$ by its sup-norm which is  dominated by 
$\|u\|_{L^\infty_\gamma(J, \cH^2(\RR_+^3))}$. We conclude
\begin{align}\label{est:g-alpha}
&\|g_\alpha\|_{L^2_\gamma(\Gamma)}^2 \\
&\le \|g\|_{\cH^m_\gamma(\Gamma)}^2  \!   + \! c(m,r)\big[\ep \gamma \|u\|_{\cH^m_{\ta,\gamma}(\Omega)}^2
    \! + \! T'(\ep \gamma)^ {-1} \|u\|_{G^m_\gamma(\Omega)}^2 \big]
   \! + \!  c(m)\rho^2 \|\tr_t \! u^1\|_{L^\infty_\gamma(\Gamma)}^2. \notag
\end{align}
 The last term disappears if $m\in\{1,2\}$.  
 
 Since $g_\alpha \cdot \nu = 0$, Proposition~\ref{prop:L2} shows that $\tr_\tau \partial^\alpha u \in L^2(\Gamma)$ and that
\[ \|\partial^\alpha u\|_{G^0_\Sigma(\Omega)}^2+ \gamma \, \|\partial^\alpha u\|^2_{L^2(\Omega)} \le c \,\big(c(r_0)\,\|u_{0,\alpha}\|_{L^2(\E)}^2
    + \tfrac1\gamma\, \|f_\alpha\|_{L^2_\gamma(\Omega)}^2 + \|g_\alpha\|_{L^2_\gamma(\Gamma)}^2 \big)\]
for a constant $c=  c(\eta,\|b\|_\infty)$ and   all  $\gamma\ge\gamma_0(\eta,r)$. We now insert estimates \eqref{est:f-u0-alpha} and 
\eqref{est:g-alpha}. Fixing a small number $\ep=\ep(\eta,m,r)>0,$ one
 can absorb the second term in the right-hand side in \eqref{est:g-alpha} by the above left-hand side.
 The assertion follows.
 
 If $b$ belongs to  $F_{\eta,\cp}^{\tilde{m}}(\Gamma)$, then we can always estimate $\partial^\beta b$ in 
 $L^\infty(J,L^2(\partial \RR^3_+))$, so that \eqref{est:g-alpha} is true without the last term and we can proceed as above.
\end{proof}

The normal derivatives can be  treated by means of Proposition~3.3 of \cite{Sp1} which is independent of the boundary 
condition.  The full apriori estimate now follows by an induction argument.

\begin{thm}\label{thm:est}
Let $t_0=0$, $T\in (0,T')$, $\rho,\eta > 0$,  $r \geq r_0 > 0$, and $m \in \NN $. Take coefficients 
$A_0 \in F^{\tilde{m}}_{\eta}(\Omega)$, $A_1, A_2 \in F^{\tilde{m}}_{\cf}(\RR^3_+)$, 
$A_3 = A_3^{\co}$, $D \in F^{\tilde{m}}(\Omega)$, and $b\in F_{\cH,\eta}^{\tilde{m}}(\Gamma)$ satisfying
\begin{align*}
	&\Fnorm{\tilde{m}}{A_i} \leq r, \quad \Fnorm{\tilde{m}}{D} \leq r,\quad \|b\|_{F^{\tilde{m}-1}(\Gamma)}\le r,
	\quad [b]_{\cH^{\tilde{m}}(\Gamma)}\le \rho, \\ 
	&\max\{\Fvarnorm{\tilde{m}-1}{A_i(0)},\max_{1\leq j\leq m-1} \Hhn{\tilde{m}-1-j}{\partial_t^j A_0(0)}\} \leq r_0,\\
	&\max\{\Fvarnorm{\tilde{m}-1}{D(0)}, \max_{1 \leq j \leq m-1} \Hhn{\tilde{m}-1-j}{\partial_t^j D(0)}\} \leq r_0
\end{align*}
for all $i \in \{0, 1, 2\}$. Choose data $u_0 \in \cH^m(\RR^3_+)^6$,  $f \in \cH^m(\Omega)^6$,  and 
$g \in \cH^m(\Gamma)^3$ with $g\cdot \nu =0$. 
Assume that the solution $u$ of~\eqref{eq:maxwell-lin-i} belongs to $G^{m}(\Omega)$. Then it is also contained in 
$G^{m}_\Sigma(\Omega)$ and there is a number $\gamma_m= \gamma_m( \eta,r,T')\ge \gamma_0$ with
\begin{align} \label{est:main}
\|u&\|^2_{G^{m}_\gamma(\Omega)}+ \gamma\,\|u\|^2_{\cH^m_{\ta,\gamma}(\Omega)} + \|\tr_\tau u\|_{\cH^m(\Gamma)}^2 \\
  &\leq  (C_{m,0}+TC_m) \e^{mC_1T} \Big(\sum_{j = 0}^{m-1}\|\partial_t^j f(0)\|^2_{\cH^{m-1-j}(\RR_+^3)} 
  + \|u_0\|^2_{\cH^m( \RR^3_+)}+ \|g\|^2_{\cH^m_\gamma(\Gamma)} \notag \\
  &\qquad +  \delta_{m>2}\rho^2 \,\|B_1 u\|_{L^\infty_\gamma(\Gamma)}^2\Big) +
    \frac{C_m}{\gamma} \|f\|_{\cH^m_{\gamma}(\Omega)}^2 \nonumber
\end{align}
for all $\gamma \geq \gamma_m$, where $C_m = C_m(\eta, r, T')\ge 1$, $C_{m,0} = C_{m,0}(\eta, r_0,\|b\|_\infty)\ge1$, and 
$C_1= C_1(\eta, r, T')$ does not depend on $m$. If $b$ even belongs to 
$b\in F_{\eta}^{\tilde{m}}(\Gamma)^ {3\times3}$ with norm less or equal $r$, then one can set $\rho=0$ in the 
above inequality.
\end{thm}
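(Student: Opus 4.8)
\emph{Proof strategy.} The plan is to combine the tangential estimate of Lemma~\ref{lem:tang-est} with the normal-direction estimate of Proposition~3.3 of \cite{Sp1} (which does not involve the boundary condition) through an induction over the number of normal ($\partial_3$) derivatives. That $u\in G^m_\Sigma(\Omega)$ is already contained in Lemma~\ref{lem:tang-est}, because the $\cH^m(\Gamma)$-norm of $\tr_\tau u$ only sees time and tangential derivatives; so only \eqref{est:main} has to be shown. For $k\in\{0,\dots,m\}$ I set
\[ N_k:=\sum_{|\alpha|\le m,\ \alpha_3\le k}\bigl(\|\partial^\alpha u\|^2_{G^0_\gamma(\Omega)}+\gamma\,\|\partial^\alpha u\|^2_{L^2_\gamma(\Omega)}\bigr)+\|\tr_\tau u\|^2_{\cH^m_\gamma(\Gamma)}, \]
so that $N_m$ dominates the left-hand side of \eqref{est:main}. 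The base case $k=0$ is exactly \eqref{est:tang} (using $\|f\|_{\cH^m_{\ta,\gamma}}\le\|f\|_{\cH^m_\gamma}$): $N_0$ is bounded by $C^\ta_{m,0}$ times the data terms appearing in \eqref{est:main}, plus $c\,\|g\|^2_{\cH^m_\gamma(\Gamma)}$, plus $\tfrac{C^\ta_m}{\gamma}(\|f\|^2_{\cH^m_\gamma(\Omega)}+\|u\|^2_{G^m_\gamma(\Omega)})$.

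For the inductive step I fix $k\le m-1$ and a multi-index $\alpha\in\NN_0^4$ with $|\alpha|\le m$ and $\alpha_3=k+1$, write $\alpha=\alpha'+e_3$, and apply $\partial^{\alpha'}$ to the evolution equation in \eqref{eq:maxwell-lin-i}. As $A_3^i=A_3^\co$ is constant, this gives
\[ A_3^\co\,\partial^\alpha u=\partial^{\alpha'}f^i-\sum_{j=0}^{2}\sum_{\beta\le\alpha'}\binom{\alpha'}{\beta}\partial^\beta A_j^i\,\partial^{\alpha'-\beta}\partial_j u-\sum_{\beta\le\alpha'}\binom{\alpha'}{\beta}\partial^\beta D^i\,\partial^{\alpha'-\beta}u, \]
whose right-hand side involves $u$ only through derivatives with at most $k$ normal derivatives and is bounded in $L^2_\gamma(\Omega)$ by $\|f\|_{\cH^m_\gamma(\Omega)}+c(m,r,T')\|u\|_{G^m_\gamma(\Omega)}$, via the product rules in $F^{\tilde m}$ and $\cH^m$ as in \eqref{est:f-u0-alpha}. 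Inserting this identity into Proposition~3.3 of \cite{Sp1} --- which reconstructs the full vector $\partial^\alpha u$, in particular its two components in $\ker A_3^\co$ (the third and sixth ones) --- bounds the $(k+1)$-block in terms of $N_k$, the initial-data quantities controlled by \eqref{est:S-lin}, $\tfrac1\gamma(\|f\|^2_{\cH^m_\gamma}+\|u\|^2_{G^m_\gamma})$, and an additive $TC_m$-term, at the price of a multiplicative factor $\e^{C_1T}$. Summing over all such $\alpha$ yields a recursion of the form $N_{k+1}\le\e^{C_1T}N_k+(C_{m,0}+TC_m)(\text{data}+\|g\|^2_{\cH^m_\gamma(\Gamma)})+\tfrac{C_m}{\gamma}(\|f\|^2_{\cH^m_\gamma(\Omega)}+\|u\|^2_{G^m_\gamma(\Omega)})$, and iterating it from $k=0$ to $k=m$ produces the factor $\e^{mC_1T}$ and the constant structure $(C_{m,0}+TC_m)$. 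Finally I would fix $\gamma_m\ge\gamma_0$, depending only on $\eta,r,T'$, large enough to absorb the surviving $\tfrac{C_m}{\gamma}\|u\|^2_{G^m_\gamma(\Omega)}$ into the left-hand side, and rename constants using $T<T'$; this gives \eqref{est:main}. In the genuinely linear case $b\in F^{\tilde m}_\eta(\Gamma)$ the last line of Lemma~\ref{lem:tang-est} lets me take $\rho=0$ throughout, so the $\delta_{m>2}\rho^2$-term drops out.

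The hard part is not new analysis but the bookkeeping of constants: one has to check that every term created by commuting $\partial^{\alpha'}$ past the coefficients --- products of $\partial^\beta A_j^i,\partial^\beta D^i\in F^{\tilde m}$ with lower-order derivatives of $u$ --- is absorbable into $\tfrac{C_m}{\gamma}\|u\|^2_{G^m_\gamma(\Omega)}$ with $C_m$ depending only on $\eta,r,T'$, while the data terms evaluated at $t=0$ carry constants depending only on $\eta,r_0,\|b\|_\infty$. Keeping these two families of constants separate, and obtaining only linear-in-$T$ together with $\e^{mC_1T}$ growth, is precisely what later makes \eqref{est:main} compatible with the contraction argument. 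The analytic subtlety of recovering the characteristic kernel components of $A_3^\co$ is delegated to Proposition~3.3 of \cite{Sp1}, which I would invoke as a black box.
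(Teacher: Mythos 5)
Your plan is viable and rests on the same two pillars as the paper -- the tangential estimate of Lemma~\ref{lem:tang-est} (which indeed already yields $u\in G^m_\Sigma(\Omega)$) and the normal-direction machinery of Proposition~3.3 of \cite{Sp1} -- but it organizes the induction differently. The paper inducts on the regularity level $m$: it applies $\partial_p$, $p\in\{0,1,2\}$, to \eqref{eq:maxwell-lin-i}, observes that $\partial_p u$ solves a problem of the same class (see \eqref{eq:thm-est}), estimates the new boundary datum $g_p=\partial_p g+\nu\times(\partial_p b\,\tr_t u^1)$ as in \eqref{est:gp}, invokes the level-$(m-1)$ statement for $\partial_p u$ together with Lemma~2.3 and (3.36)--(3.37) of \cite{Sp1}, and then controls the single remaining derivative $\partial_3^m u$ via (3.42) of \cite{Sp1}; combining \eqref{est:main0} and \eqref{est:main1} and absorbing for large $\gamma$ gives \eqref{est:main}, the factor $\e^{mC_1T}$ coming from the $m$ nested applications. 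Your induction over the number of normal derivatives at fixed $m$ touches the boundary condition only once (in the base case), which is a genuine simplification in boundary bookkeeping and still produces the $\rho^2\|B_1u\|^2_{L^\infty_\gamma}$ term and the $(C_{m,0}+TC_m)\e^{mC_1T}$ structure by iteration; in exchange you place more weight on the cited black box than the paper does. Two points deserve care if you carry this out. First, your identity $A_3^\co\partial^\alpha u=\dots$ controls only the four components of $\partial^\alpha u$ outside $\ker A_3^\co$, and only through $\partial^{\alpha'}f$, which lies merely in $L^2$ in time, so neither the kernel components nor the sup-in-time ($G^0_\gamma$) bounds follow from it; both the recovery of the characteristic components and the conversion into $G^0_\gamma$ bounds with the $f(0)$-traces, the additive $TC_m$, and the $\e^{C_1T}$ factor are exactly the ``lengthy and intricate'' content of \cite{Sp1} that you delegate -- acceptable, since the paper delegates it too, but your phrasing makes the black box look lighter than it is. Second, the paper only documents the inputs from \cite{Sp1} in the form needed for the $m$-induction ((3.42) bounds the pure normal derivative $\partial_3^m u$ by $\sum_{p\le2}\|\partial_p u\|^2_{G^{m-1}_\gamma}$), so you should verify that Proposition~3.3 of \cite{Sp1} is actually available in the per-normal-count form your recursion $N_k\to N_{k+1}$ requires; if it is only available in the form used here, your argument collapses back into the paper's induction over $m$, where the boundary estimate \eqref{est:gp} (your missing ingredient) becomes unavoidable. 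Finally, the $\gamma$-weighted terms you include in $N_k$ for $\alpha_3\ge1$ are not needed for \eqref{est:main} and are not obviously provided by the normal-direction estimates; simply drop them.
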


\begin{proof}
For $m=1$ the result follows from Proposition~3.3 of \cite{Sp1}, Proposition~\ref{prop:L2} and 
Lemma~\ref{lem:tang-est}, after choosing $\gamma_1( \eta,r,T')\ge \gamma_0$ large enough.
We now assume that $m\ge 2$ and that the assertion has been shown for $m-1$, keeping the assumptions on the coefficients.

Following the proof of Theorem~3.4 of \cite{Sp1}, we apply the derivative $\partial_p$ with $p\in\{0,1,2\}$ 
to \eqref{eq:maxwell-lin-i}. The function $\partial_p u$ then satisfies
\begin{align}\label{eq:thm-est}
L(A_j,D)\partial_p u &= f_p :=  \partial_p f - \sum_{i = 0}^2 \partial_p A_i \partial_i u - \partial_p D u 
              \quad \text{on } \Omega,\notag\\
B \partial_p u & =g_p :=  \partial_p g + \nu\times (\partial_p b \tr_t u^1)\quad \text{on }\Gamma,\\
\partial_p u(0)&=u_{0,p}:=\partial_p u_0 \quad \text{on }\RR^3_+,\notag
\end{align}
where $\partial_0 u_0 = S_{m,1, A_j,D}(0,u_0,f)$.  The functions $f_p$ and $u_{0,p}$ belong to $\cH^{m-1}$ as 
shown in the proof of Theorem~3.4 of \cite{Sp1}. As in  \eqref{est:g-alpha} we infer
\begin{align}\label{est:gp}
&\|g_p\|_{\cH_\gamma^{m-1}(\Gamma)}^2 \\
 & \le  \|g\|_{\cH^m_\gamma(\Gamma)}^2 
  \! + \!  c(m,r)\big[\ep \gamma \|u\|_{\cH^m_{\ta,\gamma}(\Omega)}^2
   \! + \! T'(\ep \gamma)^{-1} \|u\|_{G^m_\gamma(\Omega)}^2 \big]
   \! + \!  c(m)\rho^2 \|\tr_t \! u^1\|_{L^\infty_\gamma(\Gamma)}^2,\notag
\end{align}
where the last term vanishes if $m=2$ or if $b\in F_{\eta}^{\tilde{m}}(\Gamma)$. We apply the induction hypothesis to \eqref{eq:thm-est} 
and insert~\eqref{est:gp} as well as Lemma~2.3 and estimates (3.36) and (3.37) of \cite{Sp1}. It follows
\begin{align} \label{est:main0}
\|&u\|^2_{G^{m-1}_\gamma(\Omega)}+ \gamma\,\|u\|^2_{\cH^m_{\ta,\gamma}(\Omega)} + \|\tr_\tau u\|_{\cH^{m}(\Gamma)}^2 
      + \sum_{p=0}^2 \|\partial_p u\|^2_{G^{m-1}_\gamma(\Omega)} \\
  &\leq  (C_{m,0}'\! +\!TC_m') \e^{(m-1)C_1T}\Big[\sum_{j = 0}^{m-1}\!\|\partial_t^j f(0)\|^2_{\cH^{m-1-j}(\RR_+^3)} 
  + \|u_0\|^2_{\cH^m( \RR^3_+)} + \|g\|^2_{\cH^m_\gamma(\Gamma)} \notag \\
  &\qquad +  \delta_{m>2}\rho^2\,\|B_1 u\|^2_{L^\infty_\gamma(\Gamma)} \Big]
  +  \frac{C_m'}{\gamma} \,(\|f\|_{\cH^m_{\gamma}(\Omega)}^2  + \|u\|_{G_\gamma^m(\Omega)}^2),   \nonumber
\end{align}
where we already have absorbed the term $c(m,r)\ep \gamma\,\|u\|_{\cH^m_{\ta,\gamma}(\Omega)}^2$ by the left-hand side fixing 
a small  $\ep=\ep(m,\eta,r)>0$.
Exactly as in (3.42) of \cite{Sp1}, we can bound the remaining derivative by
 \begin{align} \label{est:main1}
\|\partial_3^m u\|^2_{G^0_\gamma(\Omega)} 
  &\leq  (C_{m,0}'+TC_m') \e^{C_1T} \Big(\sum_{p = 0}^{2}  \|\partial_p u\|^2_{G^{m-1}_\gamma(\Omega)}
      + \|f(0)\|^2_{\cH^{m-1}(\RR_+^3)} \notag\\
 &\qquad   + \|u_0\|^2_{\cH^m( \RR^3_+)} \Big) 
   +   \frac{C_m'}{\gamma} \,(\|f\|_{\cH^m_{\gamma}(\Omega)}^2 + \|u\|_{G_\gamma^{m}(\Omega)}^2).
\end{align}
For $\gamma\ge \gamma_m$ and a sufficiently large $\gamma_m= \gamma_m( \eta,r,T')$, 
the inequalities \eqref{est:main0} and \eqref{est:main1} imply \eqref{est:main}. 
\end{proof}

Using the trace theorem and the estimate \eqref{est:main} for $m=2$, we can get rid of the extra term in this 
inequality.  

\begin{cor}\label{cor:est}
Let  the assumptions of Theorem~\ref{thm:est} be true with $m\ge3$ and $r\ge \rho$. Using the notation of this
theorem, we obtain
\begin{align} \label{est:main-cor} 
\|&u\|^2_{G^{m}_\gamma(\Omega)} + \gamma\,\|u\|^2_{\cH^m_{\ta,\gamma}(\Omega)}+ \|\tr_\tau u\|_{\cH^m(\Gamma)}^2\\
  &\leq  (C_{m,0}+TC_m) \e^{mC_1T} \Big(\sum_{j = 0}^{m-1}\|\partial_t^j f(0)\|^2_{\cH^{m-1-j}(\RR_+^3)} 
  + \|u_0\|^2_{\cH^m( \RR^3_+)}+ \|g\|^2_{\cH^m_\gamma(\Gamma)}\Big) \notag \\
  &\quad \ + C_m\e^{(m+2)C_1T}\big[ \|u_0\|^2_{\cH^2( \RR^3_+)} + \|f(0)\|^2_{\cH^{1}(\RR_+^3)} 
  + \|\partial_t f(0)\|^2_{L^2(\RR_+^3)}  + \|g\|^2_{\cH^2_\gamma(\Gamma)}\big] \notag\\ 
  &\quad \ +    \frac{C_m}{\gamma} \|f\|_{\cH^m_{\gamma}(\Omega)}^2.  \notag
\end{align}
\end{cor}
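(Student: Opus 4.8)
The plan is to bootstrap Corollary~\ref{cor:est} from Theorem~\ref{thm:est} by removing the annoying term $\delta_{m>2}\rho^2\,\|B_1 u\|_{L^\infty_\gamma(\Gamma)}^2$ on the right-hand side of \eqref{est:main}. The key observation is that $B_1 u = \tr_t u^1$ is a \emph{tangential trace}, and by the trace theorem it can be controlled by the $\cH^2$-regularity of $u$ on $\Omega$, which is exactly what the case $m=2$ of Theorem~\ref{thm:est} provides (with $\rho=0$ there, since $m=2$ means $\delta_{m>2}=0$, so that estimate is clean).

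\textbf{Step 1: Estimate $\|B_1 u\|_{L^\infty_\gamma(\Gamma)}$ via the trace theorem.} First I would bound $\|\tr_t u^1(t)\|_{L^2(\partial\RR^3_+)}$ pointwise in $t$ by $\|u(t)\|_{\cH^1(\RR^3_+)}$ through the standard trace inequality; more precisely, using a Sobolev embedding $\cH^1(\partial\RR^3_+) \hra L^\infty(\partial\RR^3_+)$ in the two tangential variables (or the analogue of \eqref{est:trace-kappa} together with an embedding) one gets $\|\tr_t u^1(t)\|_{L^\infty(\partial\RR^3_+)} \lesssim \|u(t)\|_{\cH^2(\RR^3_+)}$. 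Multiplying by $e_{-\gamma}(t)$ and taking the sup over $t\in J$ yields $\|B_1 u\|_{L^\infty_\gamma(\Gamma)}^2 \le c\,\|u\|_{L^\infty_\gamma(J,\cH^2(\RR^3_+))}^2 \le c\,\|u\|_{G^2_\gamma(\Omega)}^2$, where the last inequality just uses $\cH^2 \hra \cH^2$ and the definition of the $G^2$-norm.

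\textbf{Step 2: Apply Theorem~\ref{thm:est} with $m=2$.} Since $m=2$ falls into the regime $\delta_{m>2}=0$, inequality \eqref{est:main} for $m=2$ reads
\begin{align*}
\|u\|^2_{G^2_\gamma(\Omega)} &\le (C_{2,0}+TC_2)\e^{2C_1T}\Big(\|u_0\|^2_{\cH^2(\RR^3_+)} + \|f(0)\|^2_{\cH^1(\RR^3_+)} + \|\partial_t f(0)\|^2_{L^2(\RR^3_+)} + \|g\|^2_{\cH^2_\gamma(\Gamma)}\Big)\\
&\quad + \frac{C_2}{\gamma}\|f\|^2_{\cH^2_\gamma(\Omega)},
\end{align*}
valid for all $\gamma\ge\gamma_2$. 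Here one uses that the hypotheses of Theorem~\ref{thm:est} for $m=2$ are implied by those for $m\ge 3$ (the $\tilde m = \max\{m,3\}$-norms of the coefficients are the same, and the finitely many time-trace conditions for $m=2$ are a subset of those for $m\ge 3$). Combining with Step~1, $\|B_1 u\|_{L^\infty_\gamma(\Gamma)}^2$ is bounded by $c\,(C_{2,0}+TC_2)\e^{2C_1T}(\dots)$ plus $cC_2\gamma^{-1}\|f\|^2_{\cH^2_\gamma(\Omega)}$.

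\textbf{Step 3: Substitute back into \eqref{est:main} for $m\ge 3$ and bookkeep constants.} Insert the bound from Step~2 into the $\delta_{m>2}\rho^2\|B_1 u\|_{L^\infty_\gamma(\Gamma)}^2$ term of \eqref{est:main}. Using $\rho\le r$, the factor $\rho^2$ is absorbed into the $r$-dependence of the constants $C_m, C_{m,0}$. The product of the prefactors $(C_{m,0}+TC_m)\e^{mC_1T}$ and $(C_{2,0}+TC_2)\e^{2C_1T}$ is dominated by $C_m\e^{(m+2)C_1T}$ after enlarging $C_m$ (both $C_{2,0}$ and $C_2$ depend only on $\eta,r,r_0,T',\|b\|_\infty$, i.e.\ on the admissible quantities, and $C_{2,0}\le C_{m,0}$, $C_2\le C_m$ can be arranged by monotonicity of the constants in $m$). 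One also needs $\gamma\ge\max\{\gamma_m,\gamma_2\}=\gamma_m$. The $\cH^2_\gamma(\Gamma)$-norm of $g$ and the $\cH^2(\RR^3_+)$, $\cH^1(\RR^3_+)$, $L^2(\RR^3_+)$ data norms appearing through Step~2 are precisely the new terms displayed in the middle line of \eqref{est:main-cor}, and the $C_m\gamma^{-1}\|f\|^2_{\cH^m_\gamma(\Omega)}$ term subsumes the extra $C_2\gamma^{-1}\|f\|^2_{\cH^2_\gamma(\Omega)}$ since $\cH^2_\gamma\hookrightarrow\cH^m_\gamma$ fails, but $\|f\|_{\cH^2_\gamma(\Omega)}\le\|f\|_{\cH^m_\gamma(\Omega)}$ for $m\ge 2$ holds trivially. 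This yields \eqref{est:main-cor}.

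The main obstacle, such as it is, is purely one of careful constant-tracking: one must verify that the trace estimate in Step~1 genuinely only costs two spatial derivatives (so that the $m=2$ case suffices and no circularity with the $m\ge 3$ estimate arises), and that all the exponential and polynomial-in-$T$ prefactors can be consolidated into the single form $C_m\e^{(m+2)C_1T}$ with $C_1$ independent of $m$ — which works because $C_1$ in Theorem~\ref{thm:est} is already $m$-independent and the two applications contribute $mC_1T$ and $2C_1T$ to the exponent additively. No genuinely new analytic input is needed beyond the trace theorem and the already-established Theorem~\ref{thm:est}.
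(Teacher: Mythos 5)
Your proposal is correct and follows exactly the route the paper indicates: bound $\|B_1u\|_{L^\infty_\gamma(\Gamma)}$ by $\|u\|_{G^2_\gamma(\Omega)}$ via the trace/Sobolev embedding, apply \eqref{est:main} with $m=2$ (where $\delta_{m>2}=0$), and absorb $\rho^2\le r^2$ and the product of exponential prefactors into $C_m\e^{(m+2)C_1T}$. The constant bookkeeping (hypotheses for $m=2$ being implied by those for $m\ge3$, enlarging $\gamma_m$, and dominating $\|f\|_{\cH^2_\gamma}$ by $\|f\|_{\cH^m_\gamma}$) is handled correctly.
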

Unfortunately, the estimate \eqref{est:main-cor} does not fit to the nonlinear fixed point argument since
the constants in front of the data in $[\dots]$ depend on $r$ and thus on the size of functions inserted in the fixed point 
operator, see \eqref{est:local-inv2}.
 
\section{Linear regularity results}\label{sec:lin}
We still have to construct solutions of the linear problem \eqref{eq:maxwell-lin} in the class $G_\Sigma^m$.
In view of the localization procedure, we can focus on  the halfspace case  \eqref{eq:maxwell-lin-i}.
We start with the  $L^2$-solution from Proposition~\ref{prop:L2} and regularize it in normal, tangential, and time 
directions differently. The apriori estimates from the previous section then allow us to pass to the limit and derive
the required smoothness. Again we follow the procedure of the paper \cite{Sp1}. We  can directly use its 
results concerning normal regularity, namely Lemma~4.1 and Corollary~4.2, since they do not involve boundary conditions.

To regularize in spatial tangential variables, we make use of the norms
\begin{align*}
\|v\|^2_{\cH_\ta^k(\E)}&=\int_{\RR_+}\int_{\RR^2}(1 + |\xi|^2)^k |(\cF_2 v)(\xi,x_3)|^2 \dd\xi \dd x_3, \notag \\
\|v\|^2_{\cH_{\ta,\delta}^k(\E)} &= \int_{\RR_+} \int_{\RR^2} (1 + |\xi|^2)^{k+1} (1 + |\delta \xi|^2)^{-1} 
   |(\mathcal{F}_2 v)(\xi,x_3)|^2 \dd\xi \dd x_3
\end{align*}
for $k \in \ZZ$, $\delta > 0$, and functions $v \in \mathcal{S}'(\overline{\RR^3_+})$ whose Fourier 
transform $\mathcal{F}_2 v$ in $(x_1,x_2)$ belongs to $L^2_{\loc}(\RR^3_+)$.  
The space $\cH_\ta^k(\E)$  consists of those  $v$ with finite norm $\|v\|_{\cH_\ta^k(\E)}$. For $k\in \NN_0$ we obtain the standard 
tangential Sobolev spaces as defined before. (See Sections~1.7 and 2.4 in~\cite{Ho}.) The norm of $\cH_\ta^{k+1}(\E)$ dominates 
that of $\cH_{\ta,\delta}^k(\E)$. Conversely, if $\|v\|_{\cH_{\ta,\delta}^k(\E)}$ is bounded as  $\delta\to0$, 
then $v$ is contained in  $\cH_\ta^{k+1}(\E)$, see (2.4.4) in \cite{Ho}.

To construct mollifiers, we take a map $\chi \in C_c^\infty(\RR^2)$ such that $\cF_2 \chi(\xi) = O(|\xi|^{m+1})$ 
as $\xi \rightarrow 0$ and $\mathcal{F}_2 \chi(t \xi) = 0$ for all $t \in \RR$ implies $\xi = 0$.
Set $\chi_\ep(x) = \ep^{-2} \chi(\ep^{-1}x)$ for all $x \in \RR^2$ and $\ep > 0$. The convolution 
in spatial tangential variables $(x_1,x_2)$ by $\chi_{\ep}$ is called $J_\ep$.
We collect the properties of $J_\ep$ in the above norms which follow from
Theorems~2.4.5 and 2.4.6 in~\cite{Ho}. There it was assumed that the coefficient $A$ belongs to Schwartz' class.
An inspection of the proofs in \cite{Ho} shows that it suffices to require the regularity stated below.
\begin{lem} \label{lem:ho}
 Let $k \in \{0,\dots, m\}$,  $\delta\in(0,1)$, $v \in \cH_\ta^{k-1}(\E)$, and  $A \in F^m(\RR^3_+)$ with 
 $\partial^\alpha A \in L^2(\RR^3_+)$ for all $\alpha \in \NN_0^3$. Then there are constants $c,C>0$ not depending 
 on $\delta$ and $v$ such that
 \begin{align*}
  c \,\|v\|^2_{\cH_{\ta,\delta}^{k-1}(\E)} &\leq \|v\|^2_{\cH_{\ta}^{k-1}(\E)}
     + \int_0^1 \|J_\ep v\|_{L^2(\E)}^2 \,\ep^{-2k-1} \Big(1 + \frac{\delta^2}{\ep^2} \Big)^{-1} \dd\ep \\ 
  &\leq  C \,\|v\|^2_{\cH_{\ta,\delta}^{k-1}(\E)},\\
  \int_0^1 \| A J_\ep v - J_\ep&(A v)\|_{L^2(\E)}^2\, \ep^{-2k-1} \Big(1 + \frac{\delta^2}{\ep^2} \Big)^{-1} \dd\ep 
  \leq C\,\|v\|^2_{\cH_{\ta,\delta}^{k-2}(\E)}\,.
 \end{align*}
\end{lem}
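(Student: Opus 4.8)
The plan is to deduce both bounds from the estimates underlying Theorems~2.4.5 and~2.4.6 of \cite{Ho}, keeping track of which norms of $A$ actually enter. For the first inequality I pass to the partial Fourier transform $\cF_2$ in $(x_1,x_2)$. Since $\cF_2(J_\ep v)(\xi,\cdot)=\cF_2\chi(\ep\xi)\,\cF_2 v(\xi,\cdot)$, Plancherel and Fubini turn the middle member of the claimed chain into
\[
 \int_{\RR_+}\!\!\int_{\RR^2}\big[(1+|\xi|^2)^{k-1}+M_\delta(\xi)\big]\,|\cF_2 v(\xi,x_3)|^2\dd\xi\dd x_3,\qquad
 M_\delta(\xi)=\int_0^1 |\cF_2\chi(\ep\xi)|^2\,\frac{\ep^{-2k-1}}{1+\delta^2\ep^{-2}}\dd\ep .
\]
After the substitution $s=\ep|\xi|$ one has $M_\delta(\xi)=|\xi|^{2k}\int_0^{|\xi|}|\cF_2\chi(s\om)|^2 s^{-2k+1}(s^2+\delta^2|\xi|^2)^{-1}\dd s$ with $\om=\xi/|\xi|$. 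The vanishing $\cF_2\chi(\xi)=O(|\xi|^{m+1})$ together with $k\le m$ makes $s\mapsto|\cF_2\chi(s\om)|^2 s^{-2k+1}$ and $s\mapsto|\cF_2\chi(s\om)|^2 s^{-2k-1}$ integrable on $(0,\infty)$ uniformly in $\om$, and the non-degeneracy of $\chi$, continuity, and compactness of the unit sphere give $\int_a^R|\cF_2\chi(s\om)|^2 s^{-2k+1}\dd s\ge c_0>0$ uniformly for suitable $0<a<R$. Using $(s^2+\delta^2|\xi|^2)^{-1}\le\min\{s^{-2},(\delta|\xi|)^{-2}\}$ for the upper bound, and for the lower bound restricting the $s$-integral to $[a,R]$ when $|\xi|\ge R$ and handling the bounded region $|\xi|\le R$ directly (here one uses $\delta<1$, so that $1+\delta^2|\xi|^2\le1+|\xi|^2$), one arrives at the two-sided pointwise estimate
\[
 c\,(1+|\xi|^2)^k(1+\delta^2|\xi|^2)^{-1}\le(1+|\xi|^2)^{k-1}+M_\delta(\xi)\le C\,(1+|\xi|^2)^k(1+\delta^2|\xi|^2)^{-1}
\]
with $c,C$ independent of $\delta$. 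Integrating against $|\cF_2 v|^2$ yields the first assertion; $A$ plays no role in it.

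For the commutator estimate I write, with $x=(x',x_3)$ and $x'=(x_1,x_2)$,
\[
 \big(AJ_\ep v-J_\ep(Av)\big)(x)=\int_{\RR^2}\chi_\ep(x'-y')\big(A(x',x_3)-A(y',x_3)\big)v(y',x_3)\dd y',
\]
Taylor-expand $A(\cdot,x_3)$ in the tangential variables around $x'$ up to order $m$, and note that the polynomial multiples $z^\beta\chi(z)$, $|\beta|\le m$, have Fourier transforms vanishing of order $m+1-|\beta|\ge1$ at the origin (a consequence of $\cF_2\chi(\xi)=O(|\xi|^{m+1})$). Each Taylor contribution of order $\beta\in\NN_0^2$, $1\le|\beta|\le m$, then takes the form $\pm\tfrac1{\beta!}\,\partial^\beta A(x)\,\ep^{|\beta|}(\tilde\psi_{\beta,\ep}*v)(x)$ with $\tilde\psi_\beta(z)=z^\beta\chi(z)$ and $*$ the convolution in $x'$, plus an order-$m$ remainder carrying a factor $\ep^m$. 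For $|\beta|=1$ one has $\partial_j A\in L^\infty(\RR^3_+)$ and $\cF_2\tilde\psi_j(\xi)=O(|\xi|^m)$, so the factor $\ep$ turns the weight $\ep^{-2k-1}$ into $\ep^{-2(k-1)-1}$, and the weighted $\ep$-integral of the square of this term is bounded, by the upper-bound direction of the first assertion applied with $k-1$ in place of $k$ and $\tilde\psi_j$ in place of $\chi$, by $\|\partial_j A\|_{L^\infty(\RR^3_+)}^2\,\|v\|^2_{\cH_{\ta,\delta}^{k-2}(\E)}$. The contributions with $2\le|\beta|\le m$ and the remainder involve the derivatives $\partial^\beta A\in L^2(\RR^3_+)$, $2\le|\beta|\le m$; they carry strictly more powers of $\ep$ and are controlled the same way after inserting Sobolev embeddings (and, for the remainder, a change of variables along the Taylor segment), so that they too are dominated by $\|v\|^2_{\cH_{\ta,\delta}^{k-2}(\E)}$. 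Summing all contributions gives the stated estimate, with $C$ depending only on $\|A\|_{L^\infty}$, the $L^\infty$-norms of the first-order derivatives of $A$, and the $L^2$-norms of the derivatives of $A$ of orders $2,\dots,m$ --- precisely the quantities rendered finite by the hypothesis --- and independent of $\delta$ and $v$.

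The real work, and the main obstacle, is the bookkeeping in the commutator part: one must organise the Taylor expansion of $A$ against the order of vanishing of the mollifiers $z^\beta\chi(z)$ so that every resulting term is paired with a derivative of $A$ genuinely controlled by the assumed norms and with enough powers of $\ep$ to keep the weighted $\ep$-integral convergent, with the $\cH_{\ta,\delta}^{k-2}(\E)$-bound on the right. This is exactly the point where one has to inspect, rather than merely quote, the proofs of Theorems~2.4.5 and~2.4.6 in \cite{Ho} (there $A$ is a Schwartz function): the inspection confirms that those arguments use only the finitely many norms of $A$ listed above, all of which the hypothesis $A\in F^m(\RR^3_+)$ with $\partial^\alpha A\in L^2(\RR^3_+)$ supplies.
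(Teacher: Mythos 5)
Your first chain of inequalities is proved correctly, and your argument (Plancherel in $(x_1,x_2)$, the substitution $s=\ep|\xi|$, the uniform lower bound on $\int_a^R|\cF_2\chi(s\om)|^2 s^{-2k+1}\dd s$ over the unit sphere, and the two-sided comparison of $(1+|\xi|^2)^{k-1}+M_\delta(\xi)$ with $(1+|\xi|^2)^k(1+\delta^2|\xi|^2)^{-1}$) is exactly the mechanism behind Theorem~2.4.5 of \cite{Ho}; note that the paper itself offers no proof here beyond citing Theorems~2.4.5 and 2.4.6 of \cite{Ho} and remarking that an inspection of those proofs shows the stated regularity of $A$ suffices, so on this half you have done strictly more than the paper.

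The commutator half, however, has a genuine gap at precisely the point you yourself call the main obstacle. For the Taylor terms with $2\le|\beta|\le m$ you propose to pair $\partial^\beta A$ in $L^2(\E)$ against the mollified factor; but $J_\ep$ and the kernels $z^\beta\chi(z)$ act only in the tangential variables, so $(\tilde\psi_{\beta})_\ep * v$ has no regularity in $x_3$ at all: for fixed $x_3$ its tangential sup-norm is controlled only by $\ep^{-1}\|v(\cdot,x_3)\|_{L^2(\RR^2)}$, which lies in $L^2_{x_3}$ but not in $L^\infty_{x_3}$, so the product cannot be estimated by $\|\partial^\beta A\|_{L^2(\E)}$ times an $L^\infty(\E)$-bound of the convolution. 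Likewise, for $k=0$ the function $v$ is merely an element of $\cH_\ta^{-1}(\E)$, so the remainder cannot be handled by a change of variables along the Taylor segment with $|v|$ under the integral. Consequently the asserted dependence of $C$ on only $\|A\|_{L^\infty}$, $\|\nabla A\|_{L^\infty}$ and the $L^2$-norms of $\partial^\beta A$, $2\le|\beta|\le m$, is unsubstantiated, and the step it rests on would fail. The repair uses the full hypothesis $\partial^\alpha A\in L^2(\E)$ for \emph{all} $\alpha\in\NN_0^3$: every $\partial^\beta A$ with $|\beta|\ge1$ then lies in $\cH^2(\E)\hra L^\infty(\E)$, so each Taylor term (and, with an operator-norm estimate on the tangential Sobolev scale, the remainder) can be treated exactly like your first-order term, putting $\partial^\beta A$ in the sup-norm and combining the factor $\ep^{|\beta|}$ with the upper-bound half of the first inequality applied to $z^\beta\chi(z)$ at level $k-|\beta|$. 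This is in substance what the paper's citation-style proof means — Hörmander's arguments use only sup-norms of finitely many derivatives of the coefficient, all of which are finite under the stated hypotheses — but your write-up neither carries out this bookkeeping nor identifies the norms that actually enter, so the decisive part of the commutator estimate is still missing.
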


We also use the analogous results on $\RR^2\cong\partial\E$ (dropping the subscript $\ta$) which are 
taken from Theorems~2.4.1 and 2.4.2 of \cite{Ho}. Because of the above lemma, for some time we have to work with smooth coefficients whose
derivatives of arbitrary order belong to $L^2$. An approximation argument  will bring us back to  limited regularity of 
the coefficients  later. The next result provides tangential regularity.

\begin{lem}\label{lem:reg-tan}
 Let $\eta > 0$ and $m \in \NN$. Take $A_0 \in F^{\tilde{m}}_{\eta}(\Omega)$, 
 $A_1, A_2 \in F^{\tilde{m}}_{\cf}(\RR^3_+)$,  $A_3 = A_3^{\co}$,  $D \in F^{\tilde{m}}(\Omega)$, and 
 $b\in F^{\tilde{m}}_{\cH,\eta}(\Gamma)$. We further assume that $\partial^\alpha A_i, \partial^\alpha D \in L^2(\Omega)$ 
 and $\partial^\beta b \in L^2(\Gamma)$ for all $\alpha \in \NN_0^4$, $\beta \in \NN_0^3$, and $i \in \{0,1,2\}$. Choose data $u_0 \in \cH^m(\RR^3_+)$, $f \in 
 \cH^m_\ta(\Omega)$,   and  $g \in \cH^m(\Gamma)$ with $g\cdot \nu =0$.  Let $u$ be the solution of 
 \eqref{eq:maxwell-lin-i}  from Proposition~\ref{prop:L2}. Suppose that $u$ belongs to 
$\bigcap_{j = 1}^m C^j(\ol{J}, \cH^{m-j}(\E))$ and $\tr_\tau u$ to $\bigcap_{j = 1}^m \cH^j(J, \cH^{m-j}(\partial\E))$.
 Then $u$ is an element of $C(\ol{J}, \cH^m_{\ta}(\E))$ and $\tr_\tau u$ of  $L^2(J, \cH^m(\partial\E))$.
\end{lem}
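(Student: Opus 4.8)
The strategy is the classical Friedrichs-type mollifier argument in the tangential variables, following the scheme of Lemma~4.3 in \cite{Sp1}, but now carrying the boundary condition along. First I would fix $\delta\in(0,1)$ and apply the tangential mollifier $J_\ep$ to the system \eqref{eq:maxwell-lin-i}. Since $J_\ep$ commutes with the constant matrices $A_j^\co$ and with $\partial_j$, the mollified function $J_\ep u$ solves a problem of the same type with inhomogeneities $f_\ep:=J_\ep f - [A_0,J_\ep]\partial_t u - \sum_{j=1}^2[A_j^i,J_\ep]\partial_j u -[D^i,J_\ep]u$ in the interior and $g_\ep := J_\ep g + \nu\times[b_i,J_\ep]\,\tr_t u^1$ on the boundary (using the identity \eqref{eq:B-i} for the localized boundary operator, so that the commutator only hits $b_i$); the initial value is $J_\ep u_0$. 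One checks $g_\ep\cdot\nu=0$ since $b_i$ is tangential. The key analytic input is Lemma~\ref{lem:ho}: weighting by $\ep^{-2m-1}(1+\delta^2/\ep^2)^{-1}$ and integrating over $\ep\in(0,1)$, the interior commutator terms are controlled by $\|u\|^2_{\cH^{m-1}_{\ta,\delta}(\E)}$ plus lower order, while the boundary commutator term is controlled by $\|\tr_t u^1\|^2_{\cH^{m-1}_\delta(\partial\E)}$ via the analogous commutator estimate on $\RR^2\cong\partial\E$.

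The heart of the argument is to feed these mollified data into the apriori estimate of Theorem~\ref{thm:est} (or Lemma~\ref{lem:tang-est}) applied at regularity level $m$ to $J_\ep u$, then multiply by the $\ep$-weight and integrate. The left-hand side, after using the first inequality of Lemma~\ref{lem:ho}, dominates $c\gamma\|u\|^2_{\cH^m_{\ta,\delta}(\E)} + c\|\tr_\tau u\|^2_{\cH^m_\delta(\partial\E)}$ up to the already-controlled quantities $\|u\|_{\cH^{m-1}_\ta}$, $\|\tr_\tau u\|_{\cH^{m-1}(\partial\E)}$ coming from the hypothesis $u\in\bigcap_{j=1}^m C^j(\ol J,\cH^{m-j})$ and $\tr_\tau u\in\bigcap_{j=1}^m\cH^j(J,\cH^{m-j}(\partial\E))$. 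On the right-hand side I would absorb the boundary commutator contribution $\|\tr_t u^1\|^2_{\cH^{m-1}_\delta(\partial\E)}$ into the left via the trace estimate \eqref{est:trace-kappa}: applied in the $\delta$-weighted norms this gives, for any $\kappa>0$, a bound by $\kappa\|u\|^2_{\cH^m_{\ta,\delta}(\E)} + c\kappa^{-1}\|\partial_3 u\|^2_{\cH^{m-1}_{\ta,\delta}(\E)}$, and the last term is again lower order by the normal-regularity hypothesis on $u$. Here one must take $\gamma$ large enough (depending on $\eta$, the coefficient bounds, $T'$) so that the $\gamma$-term on the left genuinely dominates after the absorptions; this is exactly the role of $\gamma_m$ in Theorem~\ref{thm:est}. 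The upshot is a bound $\|u\|^2_{\cH^m_{\ta,\delta}(\E)} + \|\tr_\tau u\|^2_{\cH^m_\delta(\partial\E)} \le C$ with $C$ independent of $\delta$.

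Letting $\delta\to 0$ and invoking the converse part of the Hörmander characterization (the statement after the display defining $\|\cdot\|_{\cH^k_{\ta,\delta}}$, i.e.\ (2.4.4) in \cite{Ho}), I conclude $u\in\cH^{m+1-1}_\ta(\E)=\cH^m_\ta(\E)$ for a.e.\ $t$, with $L^2(J;\cdot)$ control, and likewise $\tr_\tau u\in L^2(J,\cH^m(\partial\E))$. Upgrading from $L^\infty(J,\cH^m_\ta)$ to $C(\ol J,\cH^m_\ta)$ is then a routine continuity-in-time argument: one differentiates the evolution equation tangentially, observes $\partial^\alpha u$ (with $\alpha_3=0$, $|\alpha|\le m$) solves an $L^2$-type problem whose data are now known to be in $L^2(\Omega)$, and applies the $C(\ol J,L^2)$-part of Proposition~\ref{prop:L2}, exactly as in \cite{Sp1}. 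The main obstacle I anticipate is bookkeeping the boundary commutator term $\nu\times[b_i,J_\ep]\tr_t u^1$: because $b_i$ lies only in $F^m_{\cH,\eta}(\Gamma)$, its top-order derivatives sit merely in $L^2(\partial\E)$, so at the highest order $|\alpha|=m$ one cannot gain a full power of $\ep$ from the commutator and must instead buy back regularity through the trace inequality \eqref{est:trace-kappa} and the independently-available normal derivative $\partial_3 u$; making this absorption work, together with the smallness needed when $[b_i]_{\cH^m}$ is large, is the delicate point and is precisely where the structural hypothesis $r\ge\rho$ (or the genuine smallness condition in the nonlinear theory) enters.
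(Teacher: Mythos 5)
Your overall scheme (mollify tangentially, commute, use Lemma~\ref{lem:ho}, let $\delta\to0$, then upgrade to continuity in time via Proposition~\ref{prop:L2} applied to tangential derivatives) is the right one, and the final continuity step matches the paper. But the central step as you state it has a genuine gap: you propose to feed $J_\ep u$ into the apriori estimate of Theorem~\ref{thm:est} (or Lemma~\ref{lem:tang-est}) \emph{at regularity level $m$}. Those results presuppose that the solution already lies in $G^m(\Omega)$ and that the data are in $\cH^m$, which is precisely what is missing here: $u$ is only assumed to lie in $\bigcap_{j=1}^m C^j(\ol J,\cH^{m-j}(\E))$, so $\partial_3^m J_\ep u=J_\ep\partial_3^m u$ is not known to exist, and the commutator inhomogeneity $f_\ep$ cannot be placed in $\cH^m(\Omega)$ (its $m$-th time derivative would require $\partial_t^{m+1}u$). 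The argument would be circular. The paper instead applies only the $L^2$-level estimate of Proposition~\ref{prop:L2} to $J_\ep u$; the $m$-th order tangential information is recovered solely through the weight $\ep^{-2m-1}(1+\delta^2/\ep^2)^{-1}$ and the two-sided characterization in Lemma~\ref{lem:ho}, which converts the $\ep$-integral of $\|J_\ep u\|_{L^2}^2$ and $\|\tr_\tau J_\ep u\|_{L^2}^2$ into the $\cH^{m-1}_{\ta,\delta}$- and $\cH^{m-1}_{\delta}$-norms, uniformly in $\delta$. If you replace your invocation of Theorem~\ref{thm:est} by Proposition~\ref{prop:L2}, the skeleton of your argument becomes the paper's.

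Your treatment of the boundary commutator is also off. No absorption via the trace inequality \eqref{est:trace-kappa}, no use of $\partial_3 u$, and no smallness or ``$r\ge\rho$'' condition is needed in this lemma: the $\RR^2$-analogue of the second estimate in Lemma~\ref{lem:ho} bounds the weighted $\ep$-integral of $\|[b,J_\ep]B_1^\co u(t)\|_{L^2(\partial\E)}^2$ by $\|\tr_\tau u\|_{L^2_\gamma(J,\cH^{m-2}_{\delta}(\RR^2))}^2$, which is dominated by $\|\tr_\tau u\|_{\cH^{m-1}_\gamma(\Gamma)}^2$ and hence is finite by the standing hypothesis $\tr_\tau u\in\bigcap_{j=1}^m\cH^j(J,\cH^{m-j}(\partial\E))$; the limited regularity of $b$ (top derivatives only in $L^2$) is exactly what this commutator lemma is built to handle, since all derivatives of $b$ are assumed square integrable here. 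The only absorption required is of the interior term $\gamma^{-1}\|u\|^2_{L^2_\gamma(J,\cH^{m-1}_{\ta,\delta}(\E))}$, done by taking $\gamma$ large. The smallness issues you anticipate belong to the nonlinear fixed-point argument and to the higher-order estimates with $\rho\neq0$, not to this regularity lemma.
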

\begin{proof}
1) We first show that $u$ belongs to $L^\infty(J, \cH^m_{\ta}(\E))$ and $\tr_\tau u$ to  $L^2(J, \cH^m(\partial\E))$.
Let $\ep,\delta \in (0,1)$ and $\gamma > 0$. The generic constants below do not depend on $\delta$ or $\gamma$. 
We let $t_0=0$ for simplicity.

Applying the operators $L=L(A_0,A_1, A_2,A_3^\co,D)$ and $B=B_2^\co - \nu\times b B_1^\co$ to $J_\ep u$, we obtain
  \begin{equation}\label{eq:j-ep} \begin{split}
L J_\ep  u &= J_\ep f + \sum_{j = 0}^2 [A_j, J_\ep] \partial_j u + [D, J_\ep] u  \qquad \text{on } \Omega,\\
B J_\ep  u &= J_\ep g - \nu\times [b,J_\ep]B_1^\co u  \qquad \text{on } \Gamma,
 \end{split} \end{equation}
where $[A_j, J_\ep]= A_jJ_\ep -J_\ep A_j$ etc. Lemma~\ref{lem:ho} implies the commutator estimate
  \begin{align}\label{est:j-ep-comm}  
   \int_J e^{-2\gamma t} \int_0^1  \|[A_j, J_\ep] &\partial_j  u(t)\|^2_{L^2(\E)} \,\ep^{-2m-1} 
     \Big(1 + \frac{\delta^2}{\ep^2} \Big)^{-1} \dd\ep \dd t \\
  &\leq c\,\big(\|u\|_{L^2_\gamma(J,\cH_{\ta,\delta}^{m-1}(\E))}^2 
    + \|\partial_t u\|_{\cH_\gamma^{m-1}(\Omega)}^2\big) \notag
  \end{align}
for all $j \in \{0,1,2\}$. The other commutators are treated analogously. In particular, $L J_\ep  u$ is an element 
of $L^2(\Omega)$, $J_{\ep} u_0$ of $L^2(\E)$, and $B J_\ep  u$ of  $L^2(\Gamma)$. Hence, the 
apriori estimate from Proposition~\ref{prop:L2} can be applied to $J_\ep u$. We first use Lemma~\ref{lem:ho} to 
derive
  \begin{align}\label{est:j-ep1}
 S_\delta&:= \sup_{t \in J} \e^{-2 \gamma t} \|u(t)\|_{\cH_{\ta,\delta}^{m-1}(\E)}^2 
  + \int_0^T \e^{-2 \gamma t} \|\tr_\tau u(t)\|_{\cH_{\delta}^{m-1}(\RR^2)}^2 \dd t\\
 &\leq c \sup_{t \in J} e^{-2 \gamma t} \Big(\|u(t)\|_{\cH^{m-1}_{\ta}(\E)}^2
   + \int_0^1 \|J_\ep  u(t)\|^2_{L^2(\E)} \,\ep^{-2m-1} \Big(1 + \frac{\delta^2}{\ep^2}\Big)^{-1}\dd\ep\Big)\notag\\
   &\quad + c \int_0^T \e^{-2 \gamma t} \|\tr_\tau u(t)\|_{\cH^{m-1}(\RR^2)}^2\dd t\notag \\
   &\quad + c\int_0^1 \int_0^T \e^{-2 \gamma t} \|\tr_\tau J_\ep u(t)\|_{L^2(\RR^2)}^2\dd t
      \:\ep^{-2m-1} \Big(1 + \frac{\delta^2}{\ep^2} \Big)^{-1} \dd\ep \notag\\
&\le c\, \Big(\|u\|_{G^{m-1}_\gamma(\Omega)}^2 +    \|\tr_\tau u\|_{\cH_\gamma^{m-1}(\Gamma)}^2 
     +\int_0^1 [\|J_\ep u\|_{G^0_\gamma(\Omega)}^2 + \|J_\ep \tr_\tau u\|_{L^2_\gamma(\Gamma)}^2]\notag\\
    &\qquad \qquad \cdot\ep^{-2m-1} \Big(1 + \frac{\delta^2}{\ep^2} \Big)^{-1} \dd\ep\Big).  \nonumber
   \end{align}
  By Proposition~\ref{prop:L2} and \eqref{eq:j-ep}  there are constants $C_0,\gamma_0 > 0$ such that the term 
  in brackets $[\dots]$  is  bounded by $C_0$ times
\begin{align*}
 &\|J_\ep u_0\|_{L^2(\E)}^2  + \int_0^T\e^{-2\gamma t} \big(\|J_\ep g(t)\|_{L^2(\RR^2)}^2 
    + \| [b, J_\ep] B_1^\co u(t)\|_{L^2(\RR^2)}^2\big)\dd t\\
 &  + \frac1\gamma\!\int_0^T\!\!\! \e^{-2\gamma t}\Big[\|J_\ep f(t)\|_{L^2(\E)}^2 
   \!+\! \|[D,\! J_\ep] u(t)\|_{L^2(\E)}^2 
   \!+ \! \sum_{j=0}^2 \|[A_j,\! J_\ep ] \partial_ju(t)\|_{L^2(\E)}^2\Big]\D t 
   \end{align*}
  for all $\gamma \geq \gamma_0$.  We insert these quantities in \eqref{est:j-ep1} 
  and interchange $\dd t$ and $\dd \ep$. Combined with Lemma~\ref{lem:ho} and  \eqref{est:j-ep-comm}, it follows 
 \begin{align}\label{est:j-ep2}
  S_\delta &\le c\, \Big[\|u\|_{G^{m-1}_\gamma(\Omega)}^2 + \|\tr_\tau u\|_{\cH^{m-1}_\gamma(\Gamma)}^2
      +\|u_0\|_{\cH^{m}(\E)}^2 + \gamma^{-1}\, \big(\|f\|_{\cH^m_\gamma(\Omega)}^2 \\
&\ \ +  \|u\|_{L^2_\gamma(J, \cH_{\ta,\delta}^{m-1}(\E))}^2 \!\!+ \|\partial_t u\|_{\cH_\gamma^{m-1}(\Omega)}^2\big)
  \!+ \|g\|_{\cH^{m}_\gamma(\Gamma)}^2 
   \!+ \|\tr_\tau u\|_{L^2_\gamma(J, \cH_{\delta}^{m-2}(\RR^2))}^2\Big].\notag
\end{align}
The last summand is bounded by $\|\tr_\tau u\|_{\cH^{m-1}_\gamma(\Gamma)}^2$. We can absorb the term with $u$ 
in the regularized norm by the left-hand side choosing a sufficiently large $\gamma$, depending on $T'$. As a result, the 
quantity $S_\delta$ is bounded uniformly in $\delta\in(0,1)$. 
We conclude that $u$ belongs to  $L^\infty(J, \cH^m_{\ta}(\E))$. Fatou's lemma further yields
\begin{align*}
\|\tr_\tau u&\|_{L^2_\gamma(J, \cH^m(\RR^2))}^2 
  = \int_0^T \e^{-2\gamma t} \int_{\RR^2} (1+|\xi|^2)^m \,|\cF_2 (\tr_\tau u)(t,\xi)|^2 \dd \xi \dd t\\
 &\le \liminf_{\delta \to 0} \int_0^T \e^{-2\gamma t} \int_{\RR^2} (1+|\xi|^2)^m 
      (1+|\delta\xi|^2)^{-1}  \,|\cF_2(\tr_\tau u)(t,\xi)|^2 \dd \xi \dd t\\
 & =    \liminf_{\delta \to 0}    \|\tr_\tau u\|_{L^2_\gamma(J,\cH_{\delta}^{m-1}(\RR^2))}^2.
 \end{align*}
 The right-hand side is finite because of estimate \eqref{est:j-ep2}, and so $\tr_\tau u$ belongs to 
 $L^2_\gamma(J, \cH^m(\partial\E))$.

\smallskip
    
2) Step 1) and Corollary~4.2 of \cite{Sp1} imply that $u$ is an element of $\cH^m(\Omega)$.  To show 
$u\in C(\ol{J}, \cH^m_\ta(\E))$, we fix  $\alpha \in \NN_0^4$ with $|\alpha| = m$ and 
$ \alpha_0 = \alpha_3 = 0$.
 Using that $u\in\cH^m(\Omega)$ solves \eqref{eq:maxwell-lin-i} and that $\tr_\tau u \in\cH^m(\Gamma)$, we derive
 \begin{align*}
  L \partial^\alpha u  &= \partial^\alpha f  - \sum_{j = 0 }^2 \sum_{0 < \beta \leq \alpha} \binom{\alpha}{\beta} 
           \partial^\beta A_j \,\partial^{\alpha - \beta} \partial_j u 
  - \sum_{0 < \beta \leq \alpha} \binom{\alpha}{\beta} \partial^\beta D \partial^{\alpha - \beta} u
  =: f_\alpha,\\
 B \partial^\alpha u &= \partial^\alpha g + \nu \times \sum_{0 < \beta \leq \alpha}\binom{\alpha}{\beta} 
         \partial^\beta b \,\tr_t \partial^{\alpha - \beta} u^1=:g_\alpha,\\
  \partial^\alpha u(0)&= \partial^\alpha u_0.       
 \end{align*}
 The function $f_\alpha$ belongs to  $L^2(\Omega)$, $g_\alpha$ to $L^2(\Gamma)$, and $\partial^\alpha u_0$
 to $L^2(\E)$, cf.\ \eqref{est:f-u0-alpha} and~\eqref{est:g-alpha}. 
 Proposition~\ref{prop:L2} then shows that $\partial^\alpha u$ is contained in $G^0(\Omega)$, as required.
  \end{proof}
  
The next lemma allows us to gain one derivative in time. In the proof one constructs a solution $v$
to the initial boundary value problem which  $\partial_t u$ formally satisfies. 
One then checks that the time integral of $v$ coincides with $u$. 
Here and in the next proposition the compatibility conditions enter in a crucial way.

\begin{lem}\label{lem:reg-time}
  Let $\eta > 0$ and $J=(t_0,T)$. Take coefficients $A_0 \in F^{3}_{\eta}(\Omega)$,  $A_1, A_2 \in F^{3}_{\cf}(\RR^3_+)$,  
  $A_3 = A_3^{\co}$,  $D \in F^{3}(\Omega)$, and  $b\in F^{3}_{\cH,\eta}(\Gamma)$.  Choose data 
  $u_0 \in \cH^1(\RR^3_+)$, $f \in \cH^1(\Omega)$, and $g \in \cH^1(\Gamma)$ with  $g\cdot \nu=0$. Assume that 
  the tuple $(t_0,A_0,A_1,A_2,A_3,D,b,u_0,f,g)$ fulfills the compatibility conditions~\eqref{eq:cc-lin} of order $1$. 
 Let $u$ be the solution of \eqref{eq:maxwell-lin-i}  from Proposition~\ref{prop:L2}.  
 Assume that $u$ belongs to $G^1_\Sigma(J' \times \RR^3_+)$ whenever
 $u \in C^1(\overline{J'}, L^2(\E))$ and $\tr_\tau u\in \cH^1(J', L^2(\partial \E))$ for every open 
 interval $J' \subseteq J$. Then $u$ is contained in $G^1_\Sigma(\Omega)$.
\end{lem}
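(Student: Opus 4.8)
By time invariance we take $t_0=0$. Applying the hypothesis of the lemma with $J'=J$, everything reduces to showing that $u\in C^1(\ol J,L^2(\E))$ and $\tr_\tau u\in\cH^1(J,L^2(\partial\E))$. The plan is to construct the initial boundary value problem that $\partial_t u$ ought to satisfy, to solve it on the $L^2$-level by Proposition~\ref{prop:L2}, to form its primitive $\tilde u(t)=u_0+\int_0^t v(s)\dd s$, and to identify $\tilde u$ with $u$. Formally differentiating \eqref{eq:maxwell-lin-i} in time and using that $A_1,A_2,A_3$ do not depend on $t$, one is led to the system
\begin{align*}
A_0\partial_t v+\sum_{j=1}^3 A_j\partial_j v+(D+\partial_t A_0)v&=\hat f:=\partial_t f-(\partial_t D)u &&\text{on }\Omega,\\
Bv&=\hat g:=\partial_t g+\nu\times\bigl((\partial_t b)B_1^\co u\bigr) &&\text{on }\Gamma,\\
v(0)&=v_1:=A_0(0)^{-1}\Bigl[f(0)-\sum_{j=1}^3A_j\partial_j u_0-D(0)u_0\Bigr].
\end{align*}
The decisive point is that the term $(\partial_t A_0)\partial_t v$ produced by the differentiation has been absorbed into a zeroth order coefficient, so that the right-hand side $\hat f$ involves $u$ but not $\partial_t u$. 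Since $A_0,D\in F^3\subseteq W^{1,\infty}$, $b\in F^3_\cH\subseteq W^{1,\infty}$, $f\in\cH^1(\Omega)$, $g\in\cH^1(\Gamma)$, $u_0\in\cH^1(\E)$, and $u\in L^2(\Omega)$ with $\tr_\tau u\in L^2(\Gamma)$ by Proposition~\ref{prop:L2}, we obtain $\hat f\in L^2(\Omega)$, $\hat g\in L^2(\Gamma)$ with $\hat g\cdot\nu=0$, $v_1\in L^2(\E)$, and $D+\partial_t A_0\in L^\infty(\Omega)$; moreover $v_1=S_{1,1,A_j,D}(0,u_0,f)$. Proposition~\ref{prop:L2} therefore provides a unique solution $v\in C(\ol J,L^2(\E))$ of this system with $\tr_\tau v\in L^2(\Gamma)$.

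Next I put $\tilde u(t)=u_0+\int_0^t v(s)\dd s$, so that $\tilde u\in C^1(\ol J,L^2(\E))$ with $\partial_t\tilde u=v$. As $v$ solves a first order hyperbolic system with $L^2(\Omega)$ right-hand side and $\tr_\tau v\in L^2(\Gamma)$, the trace commutes with the time integration (cf.\ the remarks preceding Proposition~\ref{prop:L2}), whence $\tr_\tau\tilde u=\tr_\tau u_0+\int_0^{\cdot}\tr_\tau v\in\cH^1(J,L^2(\partial\E))$ and $w:=\tilde u-u$ has a tangential trace in $L^2(\Gamma)$. Comparing $\tilde u$ with $u$, set $\phi:=L\tilde u-f$ and $\eta_\Sigma:=B\tilde u-g$. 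Differentiating in time, inserting the $v$-equation, and using $\partial_t A_j=0$ once more yields $\partial_t\phi=(\partial_t D)w$ on $\Omega$ and $\partial_t\eta_\Sigma=-\nu\times\bigl((\partial_t b)B_1^\co w\bigr)$ on $\Gamma$; the definition of $v_1$ gives $\phi(0)=A_0(0)v_1+\sum_jA_j\partial_j u_0+D(0)u_0-f(0)=0$, and the compatibility condition \eqref{eq:cc-lin} of order $1$, i.e.\ $B(0)u_0=g(0)$, gives $\eta_\Sigma(0)=0$. Hence $\phi$ and $\eta_\Sigma$ are time integrals of $L^2$-functions vanishing at $t=0$, so $\phi\in C(\ol J,L^2(\E))$ and $\eta_\Sigma\in C(\ol J,L^2(\partial\E))$ with $\|\phi(t)\|_{L^2(\E)}\le c\int_0^t\|w\|_{L^2(\E)}$ and $\|\eta_\Sigma(t)\|_{L^2(\partial\E)}\le c\int_0^t\|\tr_\tau w\|_{L^2(\partial\E)}$.

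Finally, $w$ solves \eqref{eq:maxwell-lin-i} with right-hand side $\phi$, boundary datum $\eta_\Sigma$, and zero initial value. Applying the basic estimate \eqref{est:apriori-0} of Proposition~\ref{prop:L2} on subintervals $(0,t)$ with a fixed $\gamma\ge\gamma_0$ and inserting the bounds for $\phi$ and $\eta_\Sigma$ above — which carry an extra power of the interval length, so that on a sufficiently short interval the boundary contribution is absorbed by the left-hand side — a Grönwall/continuation argument forces $w\equiv0$, i.e.\ $\tilde u=u$. Thus $u=\tilde u\in C^1(\ol J,L^2(\E))$ and $\tr_\tau u=\tr_\tau\tilde u\in\cH^1(J,L^2(\partial\E))$, and the assumption of the lemma (with $J'=J$) yields $u\in G^1_\Sigma(\Omega)$. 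I expect the main difficulties to be, first, identifying the time-differentiated system with an $L^2$ right-hand side, which rests on the time-independence of $A_1,A_2,A_3$, and second, the comparison $\tilde u=u$, for which it is essential that the defects $\phi$ and $\eta_\Sigma$ vanish at $t=0$ — precisely here the definition of $v_1$ and the first order compatibility condition are used; the trace-commutation points are routine.
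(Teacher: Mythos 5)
Your proof is correct, but it identifies $u$ with the primitive of $v$ by a mechanism genuinely different from the paper's. The paper also differentiates in time, but it writes the data of the $v$-problem in terms of the primitive itself, $\partial_t f-\partial_t D\big(\int_{t_0}^t v\dd s+u(t_0)\big)$ and analogously on the boundary; existence of $v$ then requires a contraction argument in $G^0_{\Sigma,\gamma}$ on a small time step (with the uniform radius $R_1$ and step $T_s$), and the payoff is that $w=u(t_0)+\int_{t_0}^{\cdot}v\dd s$ satisfies $Lw=f$, $Bw=g$, $w(t_0)=u(t_0)$ \emph{exactly}, so $w=u$ follows from bare uniqueness in Proposition~\ref{prop:L2}, after which the construction is iterated over time steps (this is why the paper tracks $\cH^1$-bounds of $u$ at intermediate times). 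You instead insert the already known $u$ into the data, so $v$ exists on all of $J$ by a single application of Proposition~\ref{prop:L2}, no fixed point and no time-stepping for existence; the price is that $\tilde u$ solves the original problem only up to the defects $\phi,\eta_\Sigma$, which you then kill quantitatively. Your defect identities $\partial_t\phi=(\partial_t D)w$ and $\partial_t\eta_\Sigma=-\nu\times\big((\partial_t b)B_1^\co w\big)$, with $\phi(0)=0$ from the definition of $v_1$ and $\eta_\Sigma(0)=0$ from the order-one compatibility condition, are exactly the two inputs the paper also exploits, and the absorption goes through: for instance $e^{-\gamma s}\|\eta_\Sigma(s)\|_{L^2(\partial\E)}\le c(2\gamma)^{-1/2}\|\tr_\tau w\|_{L^2_\gamma((0,s)\times\partial\E)}$, so for $\gamma$ large (or on short intervals with continuation, as you argue) the boundary contribution on the right of \eqref{est:apriori-0} is dominated by the trace term on the left and the interior term by $\gamma\|w\|^2_{L^2_\gamma}$, forcing $w\equiv0$; afterwards the lemma's hypothesis with $J'=J$ finishes the proof. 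Two points should be made explicit with the same care the paper delegates to Lemma~4.5 of \cite{Sp1}: a priori $\sum_j A_j\partial_j\tilde u$ is only a distribution, so the identity $\partial_t\phi=(\partial_t D)w$ has to be derived first in, say, $C(\ol{J},\cH^{-1}(\E))$ using the $v$-equation and $\partial_t A_j=0$, and only then upgraded to $\phi=\int_0^{\cdot}(\partial_t D)w\dd s\in C(\ol{J},L^2(\E))$ so that Proposition~\ref{prop:L2} is applicable to $w$; and $\eta_\Sigma\in\cH^1(J,L^2(\partial\E))$ (from $\tr_\tau\tilde u=\tr_\tau u_0+\int_0^{\cdot}\tr_\tau v\dd s$, $g\in\cH^1(\Gamma)$, $b\in W^{1,\infty}$) is what legitimizes evaluating $\eta_\Sigma(0)$. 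With these spelled out your route is complete and arguably lighter than the paper's (no contraction mapping, no iteration constants $R_1$, $T_s$), whereas the paper's formulation buys an identification by pure uniqueness and serves as the template reused later in Proposition~\ref{prop:reg-smooth}.
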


\begin{proof}
 1) Without loss of generality we assume $J = (0,T)$. Take $r > 0$ with
 \begin{align*}
	&\Fnorm{3}{A_i} \leq r, \quad \Fnorm{3}{D} \leq r,\quad \|b\|_{F^{3}_\cH(\Omega)}\le r,\\ 
	&\max\{\Fvarnorm{2}{A_i(t)},\max_{j\in\{1,2\}} \Hhn{2-j}{\partial_t^j A_0(t)}\} \leq r,\\
	&\max\{\Fvarnorm{2}{D(t)}, \max_{j\in\{1,2\}} \Hhn{2-j}{\partial_t^j D(t)}\} \leq r
\end{align*}
for all $i \in \{0, 1, 2\}$ and $t \in \ol{J}$. Let $\gamma = \gamma_1(\eta, r,T)\ge 1$ be defined by
Theorem~\ref{thm:est}. We further choose a number $C_0 = C_0(\eta,r,T)\ge1$ dominating
the constants in~\eqref{est:S-lin}, Proposition~\ref{prop:L2}, and Theorem~\ref{thm:est}.
We finally set 
 \begin{align*}
  R_1 = \e^{2 \gamma T} C_0(\|f\|_{G^0_\gamma(\Omega)}^2 +  \|f\|_{\cH^1_\gamma(\Omega)}^2
      + \|g\|_{\cH^1_\gamma(\Gamma)}^2 +  \|u_0\|_{\cH^1(\E)}^2).
 \end{align*}

 
2) Take an initial  time  $t_0 \in \ol{J}$ and assume that $u(t_0)$ belongs to $\cH^1(\E)$ with 
$\|u(t_0)\|_{\cH^1(\E)}^2 \le R_1$. Choose a time step 
\[0<T_s\le \min\{1, (6C_0r^2)^{-1}\}.\] 
Following  the proof of Lemma~4.5 of \cite{Sp1}, we want to construct a function $v$ in $C([t_0, T_s'], L^2(\E))$ 
satisfying
 \begin{align}\label{eq:DtD}
\tilde{L} v   &= \partial_{t} f - \partial_t D \Big(\int_{t_0}^t v(s) \dd s +  u(t_0) \Big) 
      \qquad \text{on }  \Omega',\notag\\
B v &= \partial_t g +\nu\times \partial_t b\Big(\int_{t_0}^t B_1^\co v(s) \dd s +  B_1^\co u(t_0)\Big) 
  \qquad \text{on }  \Gamma',\\
    v(t_0) &= S_{1,1, A_j, D}(t_0, f, u(t_0))=:v_0 \qquad \text{on } \RR^3_+.\notag
 \end{align}
Here we define $T_s' =\min\{t_0 + T_s,T\}$,  $J'=(t_0,T_s')$, $\Omega'=J'\times \E$, $\Gamma'=J'\times\partial\E$,
and $\tilde{L}  = L(A_0,A_1,A_2, A_3, \partial_t A_0 + D)$. 
To solve \eqref{eq:DtD}, we set 
\[R= (4C_0^2 (1+C_0) +6 + 6C_0 (1+c_{\tr}^2)r^2) R_1\ge R_1,\]
 where $c_{\tr}$ is the norm of the trace operator from $\cH^1(\E)$ to $L^2(\partial\E)$.
 Let $E$ be the closed ball in $G^0_{\Sigma,\gamma}(\Omega')$ with radius $R^{1/2}$ and center 0. Take $w\in E$. 
 We look at the problem
 \begin{align}\label{eq:DtD-fix}
\tilde{L} v   &= \partial_{t} f - \partial_t D \Big(\int_{t_0}^t w(s) \dd s +  u(t_0) \Big) =:f_w
   \qquad \text{on }  \Omega',\notag\\
 B v &= \partial_t g +\nu\times \partial_t b\Big(\int_{t_0}^t B_1^\co w(s) \dd s +  B_1^\co u(t_0)\Big) =:g_w
    \qquad \text{on }  \Gamma',\\
    v(t_0) &= v_0\qquad \text{on } \RR^3_+.\notag
 \end{align}
 Note that the data in the above problem fulfill  the assumptions of Proposition~\ref{prop:L2}. (Use  Lemma~2.3 of 
\cite{Sp1} for  the initial value.) This proposition thus provides a unique solution 
$\Phi(w)\in G^0_{\Sigma,\gamma}(\Omega')$ of \eqref{eq:DtD-fix}. As in step I in the proof of Lemma~4.5 of \cite{Sp1}, 
 Proposition~\ref{prop:L2} and~\eqref{est:S-lin} imply that
 \begin{align*}
  \|&\Phi(w)\|_{G^0_{\Sigma,\gamma}(\Omega')}^2 \leq C_0 \, \big(\|v_0\|^2_{L^2(\E)} + \|f_w\|_{L^2_\gamma(\Omega')}^2 
   +  \|g_w\|_{L^2_\gamma(\Gamma')}^2\big)\\
  &\le 2C_0^3\,(\|f(0)\|_{L^2(\E)}^2+ \|u(t_0)\|_{\cH^1(\E)}^2) + 3C_0\,(\|\partial_{t} f\|_{L^2_\gamma(\Omega')}^2 
         + \|\partial_{t}g\|_{L^2_\gamma(\Gamma')}^2)\notag\\
 &\quad + 3 C_0 r^2  \int_{t_0}^{T_s'} \e^{-2\gamma t} \Big[T_s\int_{t_0}^t \big(\|w(s)\|_{L^2(\E)}^2 
 + \|\tr_\tau w(s)\|_{L^2(\partial \E)}^2\big)\dd s   \\
    &\qquad \qquad + \|u(t_0)\|_{L^2(\E)}^2 + \|\tr_\tau u(t_0)\|_{L^2(\partial \E)}^2  \Big] \dd t\\
    &\le 2C_0^2(1+C_0) R_1 + 3R_1 + 3 C_0 (1+c_{\tr}^2)r^2R_1T_s + 3C_0r^2 T_s \, \|w\|_{G^0_{\Sigma,\gamma}(\Omega')}^2 \le R.\notag
 \end{align*}
  Hence, $\Phi$ maps $E$ into itself. In a similar way we estimate 
 \begin{align*}
 \|\Phi(w)-\Phi(\hat w)\|_{G^0_{\Sigma,\gamma}(\Omega')}^2 
 &\leq C_0 \, (\|f_w-f_{\hat w}\|_{L^2_\gamma(\Omega')}^2 +  \|g_w- g_{\hat w}\|_{L^2_\gamma(\Gamma')}^2) \\
 & \le r^2C_0 T_s \,\|w-\hat w\|_{G^0_{\Sigma,\gamma}(\Omega')}^2  \le \tfrac{1}{6}\, \|w-\hat w\|_{G^0_{\Sigma,\gamma}(\Omega')}^2 
  \end{align*}
 for all $w, \hat w \in E$. The contraction mapping principle thus gives a unique function $v \in E$ 
 solving \eqref{eq:DtD}.
 
\smallskip
 
 3) In this step we assume that $u(t_0)$ belongs to $\cH^1(\E)$  with 
 $\|u(t_0)\|_{\cH^1(\E)}^2  \le R_1$ and that the tuple $(t_0, A_0,A_1,A_2, A_3, D, b,f,g, u(t_0))$ 
 fulfills the compatibility conditions~\eqref{eq:cc-lin} of order one; i.e., $B(t_0) u(t_0) = g(t_0)$ 
 on $\partial \RR^3_+$. 
 We use the solution $v\in G^0_\Sigma(\Omega')$ of \eqref{eq:DtD} on the time interval $J'$ from part~2) and define
 \[  w(t) = u(t_0) + \int_{t_0}^t v(s) \dd s\]
 for $t\in\ol{J'}$.  Step II) of the proof of Lemma~4.5 of \cite{Sp1} shows that $L(A_j,D)w=f$ 
 on $\Omega'$ and $w(t_0)=u(t_0)$. Using also \eqref{eq:DtD},  we  compute
 \begin{align*}
 B(t)w(t) &= B(t_0) u(t_0)+ (B(t)-B(t_0))u(t_0) +\int_{t_0}^t B_2^\co v(s)\dd s\\
 &\qquad + (b(t)B_1^\co(w(t)-u(t_0))\times \nu\\
 &= g(t_0) + (b(t)- b(t_0))B_1^\co u(t_0) \times \nu + (b(t)B_1^\co(w(t)-u(t_0))\times \nu\\
  &\qquad +\int_{t_0}^t\partial_t g(s)\dd s
     -\int_{t_0}^t (\partial_t b(s)B_1^\co w(s) +b(s)  B_1^\co \partial_t w(s))\times \nu \dd s  \\
 &= g(t)
 \end{align*}
  for $t\in\ol{J'}$. The  uniqueness statement in Proposition~\ref{prop:L2} thus yields that $u=w$ on $J'$
  (where we use the obvious variant of this result with initial time $t_0$). We conclude that
  $u \in C^1(\overline{J'}, L^2(\E))$ and $\tr_\tau u\in \cH^1(J', L^2(\partial \E))$ as $\tr_\tau$ commutes 
  with integration in time, and hence
   $u$ belongs to $G^1_\Sigma(\Omega')$ by the assumption. 
   
The assertion now follows by an iteration argument as in Step III) of the proof in Lemma~4.5 of \cite{Sp1}.
(Here the exponential factor in the definition of $R_1$ comes into play.)
\end{proof}

For smooth coefficients we now obtain the desired regularity properties on~$\E$. 

\begin{prop}\label{prop:reg-smooth}
Let $\eta > 0$ and $m \in \NN $. Take $A_0 \in F^{\tilde{m}}_{\eta}(\Omega)$ with $\partial_t A_0 \in F^{\max\{m-1,3\}}_{\eta}(\Omega)$, 
$A_1,A_2\in F^{\tilde{m}}_{\cf}(\RR^3_+)$, $A_3=A_3^{\co}$, $D\in F^{\tilde{m}}(\Omega)$, 
and $b\in F_{\cH,\eta}^{\tilde{m}}(\Gamma)$. We assume that $\partial^\alpha A_i, \partial^\alpha D \in L^2(\Omega)$ 
and $\partial^\beta b \in L^2(\Gamma)$ for all $\alpha \in \NN_0^4$, $\beta \in \NN_0^3$, and $i \in \{0,1,2\}$. 
Choose data $u_0 \in \cH^m(\RR^3_+)^6$,  
$f \in \cH^m(\Omega)^6$,  and  $g \in \cH^m(\Gamma)^3$ with $g\cdot \nu =0$  such that the tuple 
$(t_0, A_j, D, b, u_0, f, g)$  fulfills the compatibility conditions~\eqref{eq:cc-lin} of order $m$. 
Then the solution $u$ of \eqref{eq:maxwell-lin-i} from Proposition~\ref{prop:L2} belongs to
$G_\Sigma^m(\Omega)$.
 \end{prop}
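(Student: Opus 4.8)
The plan is to combine the three regularizing devices---mollification in the tangential variables (Lemma~\ref{lem:ho}, Lemma~\ref{lem:reg-tan}), mollification in the normal variable (Lemma~4.1 and Corollary~4.2 of \cite{Sp1}), and the integration-in-time trick (Lemma~\ref{lem:reg-time})---in an induction on $m$, following the scheme of Theorem~4.6 of \cite{Sp1}. The base case $m=1$ is essentially Lemma~\ref{lem:reg-time}: its hypothesis that $u$ lies in $G^1_\Sigma(J'\times\E)$ whenever $u\in C^1(\ol{J'},L^2(\E))$ and $\tr_\tau u\in\cH^1(J',L^2(\partial\E))$ is supplied by Lemma~\ref{lem:reg-tan} (with $m=1$, which gives $u\in C(\ol{J'},\cH^1_\ta(\E))$ and $\tr_\tau u\in L^2(J',\cH^1(\partial\E))$, hence in particular $u\in C^1(\ol{J'},L^2(\E))$ via the evolution equation and $\tr_\tau u$ in the required class), so that Lemma~\ref{lem:reg-time} applies and yields $u\in G^1_\Sigma(\Omega)$.

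For the inductive step, assume the claim holds up to $m-1$, so that the solution $u$ from Proposition~\ref{prop:L2} already belongs to $G^{m-1}_\Sigma(\Omega)$; we want $u\in G^m_\Sigma(\Omega)$. First I would differentiate \eqref{eq:maxwell-lin-i} once in time: writing $v$ for the solution of the problem that $\partial_t u$ formally satisfies---with coefficient $A_0\partial_t+\dots+(\partial_t A_0+D)$, inhomogeneity $\partial_t f-\partial_t D\,u$, boundary datum $\partial_t g+\nu\times\partial_t b\,B_1^\co u$, and initial value $S_{m,1,A_j,D}(t_0,u_0,f)$---one checks, exactly as in Lemma~\ref{lem:reg-time} and in the proof of Theorem~4.6 of \cite{Sp1}, that the compatibility conditions of order $m$ for $u$ translate into the compatibility conditions of order $m-1$ for $v$, using \eqref{eq:S-lin}, \eqref{eq:cc-lin} and the product/chain formulas for $\partial_t^k b$. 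The inductive hypothesis, applied to $v$ with the coefficient $\partial_t A_0+D\in F^{\max\{m-1,3\}}$ (this is where the extra assumption $\partial_t A_0\in F^{\max\{m-1,3\}}_\eta$ is needed), gives $v\in G^{m-1}_\Sigma(\Omega)$, and then $\int_{t_0}^{\cdot}v+u_0$ solves the same linear problem as $u$, so by the uniqueness part of Proposition~\ref{prop:L2} it equals $u$; hence $u\in\bigcap_{j=1}^m C^j(\ol J,\cH^{m-j}(\E))$ and $\tr_\tau u\in\bigcap_{j=1}^m\cH^j(J,\cH^{m-j}(\partial\E))$. Feeding this time-regularity into Lemma~\ref{lem:reg-tan} (now with the full index $m$) produces $u\in C(\ol J,\cH^m_\ta(\E))$ and $\tr_\tau u\in L^2(J,\cH^m(\partial\E))$; and finally Corollary~4.2 of \cite{Sp1}, which upgrades tangential plus time regularity to normal regularity and is insensitive to the boundary condition, closes the induction and gives $u\in\cH^m(\Omega)$ with the trace bound, i.e.\ $u\in G^m_\Sigma(\Omega)$.

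The main obstacle I anticipate is bookkeeping at top order in the time-differentiation step: one must verify that the boundary datum $\partial_t g+\nu\times\partial_t b\,B_1^\co u$ for $v$ genuinely lies in $\cH^{m-1}(\Gamma)$ and that the associated compatibility conditions \eqref{eq:cc-lin} of order $m-1$ hold, which requires controlling $\partial_t b\,B_1^\co u$ in $\cH^{m-1}(\Gamma)$. Because $b$ only lies in $F^{\tilde m}_{\cH,\eta}(\Gamma)=F^{\tilde m-1}_\tau(\Gamma)\cap\cH^{\tilde m}(\Gamma)$, its highest tangential derivatives have merely $L^2(\Gamma)$ integrability rather than $L^\infty$, so the highest-order term must be paired against a trace of $u$ estimated in $L^\infty$ via the trace embedding $\cH^2(\E)\hra L^\infty(\partial\E)$---exactly the mechanism already used in \eqref{est:g-alpha} and in the proof of Lemma~\ref{lem:reg-tan}. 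One also has to make the time-step/fixed-point argument of Lemma~\ref{lem:reg-time} work at level $m-1$ (its statement is only for $m=1$), but this is a routine adaptation: the same contraction in $G^{m-1}_{\Sigma,\gamma}(\Omega')$ on a small time interval, with radius controlled through Theorem~\ref{thm:est} and \eqref{est:S-lin} in place of Proposition~\ref{prop:L2} and Lemma~2.3 of \cite{Sp1}, followed by the iteration over subintervals whose length is uniform thanks to the exponential weight in the definition of $R_{m-1}$.
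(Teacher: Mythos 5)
Your proposal follows essentially the same route as the paper's proof: induction on $m$, base case from Lemma~\ref{lem:reg-tan}, Lemma~\ref{lem:reg-time} and the normal-regularity results of \cite{Sp1}, and inductive step by time-differentiating the system, checking that the order-$m$ compatibility conditions for $u$ yield the order-$(m-1)$ conditions for the differentiated problem (this is exactly where $\partial_t A_0\in F^{\max\{m-1,3\}}$ is consumed, as you say), applying the induction hypothesis, and then upgrading via Lemma~\ref{lem:reg-tan} and the normal-direction results Lemma~4.1/Corollary~4.2 of \cite{Sp1}; your anticipated handling of $\partial_t b\,B_1^\co u$ via the $L^2$--$L^\infty$ pairing is also the paper's mechanism, and no higher-order version of Lemma~\ref{lem:reg-time} is in fact needed.

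The one point to tighten is the identification of $v$ with $\partial_t u$: if you take the boundary datum $\partial_t g+\nu\times\partial_t b\,B_1^\co u$ (as in \eqref{eq:DtD1}) and try to show that $w:=u_0+\int_{t_0}^{\cdot}v$ solves the original problem, the verification of $B(t)w(t)=g(t)$ is circular, since one picks up the error $\int_{t_0}^t\nu\times\partial_s b\,B_1^\co(u-w)\dd s$ (in Lemma~\ref{lem:reg-time} this is avoided only because the $v$-problem there is set up self-referentially with $\int_{t_0}^t B_1^\co v\dd s + B_1^\co u(t_0)$). The repair is immediate and is how the paper argues: since $u\in G^{m-1}_\Sigma(\Omega)$ with $m\ge 2$, the function $\partial_t u$ is itself an $L^2$-solution of the differentiated problem, so the uniqueness statement of Proposition~\ref{prop:L2} gives $v=\partial_t u$ directly, and the induction hypothesis then places $\partial_t u$ in $G^{m-1}_\Sigma(\Omega)$ before you feed the time and trace regularity into Lemma~\ref{lem:reg-tan} and the normal-regularity lemma.
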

\begin{proof}
We proceed as in the proof of Proposition~4.7 of \cite{Sp1}, letting $t_0=0$ for simplicity. 
The result for $m=1$ is a consequence of the two 
previous lemmas and Lemma~4.1 of \cite{Sp1}. We assume that we have shown the assertion for some $m\in \NN$ and 
that the assumptions are satisfied for $m+1$. Hence the solution $u$ of  \eqref{eq:maxwell-lin-i}  belongs to 
$G^m_\Sigma(\Omega)$. With the notation of the proof of Lemma~\ref{lem:reg-time}, we see that the function
$v=\partial_t u$ 
fulfills 
\begin{align}\label{eq:DtD1}
\tilde{L} v   &= \partial_{t} f - \partial_t D u=:f_u \qquad \text{on }  \Omega',\notag\\
    B v &= \partial_t g +\nu\times \partial_t b \, B_1^\co u=:g_u  \qquad \text{on }  \Gamma',\\
    v(0) &= S_{m+1,1, A_j, D}(0, u_0,f)=:v_0 \qquad \text{on } \RR^3_+.\notag
 \end{align}
As in the proof of Proposition~4.7 of \cite{Sp1} one can check that the coefficients and data in \eqref{eq:DtD1}
satisfy the regularity  assumptions of the induction hypothesis. 
For the compatibility conditions,  we note that Lemma~4.6 of \cite{Sp1} yields 
 \[ S_{m,p,A_j,\tilde{D}}(0, v_0, f_u)=  S_{m+1,p+1,A_j,D}(0, u_0, f)\]
 for all $p\in\{0,1,\dots, m-1\}$ and $\tilde{D} :=\partial_t A_0 +D$. Equations \eqref{eq:S-lin} and 
 \eqref{eq:cc-lin} thus imply
\begin{align}\label{eq:cc-reg}
B(0) S_{m,p,A_j,\tilde{D}}(0, v_0, f_u) &= B(0)  S_{m+1,p+1,A_j,D}(0, u_0, f)\\
  &= \partial_t^{p+1} g(0)
     + \nu\times \sum_{k=1}^{p+1} \binom{p+1}{k} \partial^k_t b(0)  B_1^\co \partial_t^{p+1-k}u(0).\notag
\end{align}     
On the other hand, by means of  $v=\partial_t u$  we calculate
\begin{align*}
\partial_t^{p} g_u(0)
     + & \,\nu\times \sum_{k=1}^{p} \binom{p}{k} \partial^k_t b(0)  B_1^\co \partial_t^{p-k}v(0)\\
&=\partial_t^{p+1} g(0) + \nu\times \sum_{k=0}^p \binom{p}{k} \partial^{k+1}_t b(0)  B_1^\co \partial_t^{p-k} u(0)\\
 &\qquad   +  \nu\times \sum_{k=1}^{p} \binom{p}{k} \partial^k_t b(0)  B_1^\co \partial_t^{p-k+1} u(0)\\
  &=\partial_t^{p+1} g(0)
     + \nu\times \sum_{k=1}^{p+1} \binom{p+1}{k} \partial^k_t b(0)  B_1^\co \partial_t^{p+1-k}u(0).
\end{align*} 
Combined with \eqref{eq:cc-reg}, we have established the compatibility condition \eqref{eq:cc-lin} 
of order $m$ for \eqref{eq:DtD1}. 

The induction hypothesis now shows that $\partial_t u$ belongs to $G^m_\Sigma(\Omega)$. By Lemma~4.1 of \cite{Sp1}, 
Lemma~\ref{lem:reg-tan}, and the fact that $\tr_\tau$ commutes with differentiation in time, 
the map $u$ is thus contained in $G^{m+1}_\Sigma(\Omega)$.
\end{proof}

Above we have assumed extra smoothness of the coefficients. This assumption can be removed by an approximation argument. 
 Take $A_0 \in F^{\tilde{m}}_{\eta}(\Omega)$, $A_1,A_2\in F^{\tilde{m}}_{\cf}(\RR^3_+)$, $A_3=A_3^{\co}$, 
 $D\in F^{\tilde{m}}(\Omega)$, and  $b\in F^{\tilde{m}}_{\cH,\eta}(\Gamma)$.
 Using standard methods, one constructs functions
$A_{0,\ep} \in F^{\tilde{m}}_{\eta}(\Omega)$,  $A_{1,\ep},A_{2,\ep}\in F^{\tilde{m}}_{\cf}(\E)$, 
$D_{\ep}\in F^{\tilde{m}}(\Omega)$, and $b_\ep\in F_{\cH,\eta}^{\tilde{m}}(\Gamma)$ for $\ep>0$, 
which are uniformly bounded in the respective  $F$--space 
and tend uniformly to $A_0$, $A_1$, $A_2$, $D$ and $b$, respectively, as $\ep\to0$. Moreover, all their partial derivatives 
are contained in the respective $F$--space and thus in~$L^2$. The analogous results are true for $A_{0,\ep}(0)$ and $D_\ep(0)$ with $\tilde{m}$ replaced by 
 $\tilde{m}-1$. (Compare Lemma~2.2 of \cite{Sp1}.) 
 
 We again choose data $u_0 \in \cH^m(\RR^3_+)$,  
$f \in \cH^m(\Omega)$,  and  $g \in \cH^m(\Gamma)$ with $g\cdot \nu =0$  such that the tuple 
$(0, A_0,A_1, A_2,A_3, D, b,u_0, f, g)$  fulfills the compatibility conditions~\eqref{eq:cc-lin} of order $m$. 
To use the approximating coefficients, one has to modify the initial value in such a way that 
\eqref{eq:cc-lin} is still satisfied.

\begin{lem}\label{lem:cc-ep}
Under the above assumptions, there is a number $\ep_0>0$ and functions $u_{0,\ep}$ in $\cH^m(\E)$ for 
$\ep\in(0,\ep_0]$ such that the compatibility conditions~\eqref{eq:cc-lin} of order $m$
are satisfied by the  tuple $(0, A_{0,\ep},A_{1,\ep}, A_{2,\ep},A_3, D_\ep, b_\ep,   u_{0,\ep},f, g)$.
Moreover, the new initial values $u_{0,\ep}$ tend to $u_0$ in $\cH^m(\E)$ as $\ep\to0$.
\end{lem}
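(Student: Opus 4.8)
The plan is to correct the given initial value $u_0$ by a small perturbation $w_\ep$ concentrated near the boundary, chosen so that the compatibility conditions for the approximating coefficients hold for $u_{0,\ep}:=u_0-w_\ep$; this is the analogue for the absorbing boundary condition of the corresponding construction in \cite{Sp1}, the new feature being that the additional term $B_0^\co b B_1^\co$ in the boundary operator must be carried along, which is unproblematic since $b_\ep(0)\ge\eta I$ uniformly in $\ep$. Throughout we take $t_0=0$.

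First I would set up the defects. For $\ep>0$ let $S^\ep_{m,p}:=S_{m,p,A_{j,\ep},D_\ep}(0,u_0,f)$ be the expressions~\eqref{def:S-lin} built from the approximating coefficients and the original data, and put $B_\ep(0)=B_2^\co+B_0^\co b_\ep(0)B_1^\co$. By the recursion~\eqref{def:S-lin}, the bound~\eqref{est:S-lin}, and the convergence of the approximating coefficients in the norms entering~\eqref{est:S-lin} -- in particular $A_{0,\ep}(0)\to A_0(0)$ and $D_\ep(0)\to D(0)$ in $F^{\tilde{m}-1,0}(\E)$, $\partial_t^j A_{0,\ep}(0)\to\partial_t^j A_0(0)$ and $\partial_t^j D_\ep(0)\to\partial_t^j D(0)$ in $\cH^{\tilde{m}-1-j}(\E)$, and $\partial_t^k b_\ep(0)\to\partial_t^k b(0)$ in the corresponding trace spaces -- all of which hold by the construction of the $\ep$-coefficients recalled above (cf.\ Lemma~2.2 of \cite{Sp1}), we obtain $S^\ep_{m,p}\to S_{m,p,A_j,D}(0,u_0,f)$ in $\cH^{m-p}(\E)$ for each $p\in\{0,\dots,m-1\}$ as $\ep\to0$. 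Define
\[ d_{p,\ep}:=B_\ep(0)S^\ep_{m,p}-\partial_t^p g(0)-\nu\times\sum_{k=1}^{p}\binom{p}{k}\partial_t^k b_\ep(0)\,B_1^\co S^\ep_{m,p-k}\in\cH^{m-p-1/2}(\partial\E) \]
for $p\in\{0,\dots,m-1\}$ (the sum being empty for $p=0$); since all the terms are tangential (using $g\cdot\nu=0$ and the structure of $B_0^\co$ and $\nu\times(\cdot)$) we have $d_{p,\ep}\cdot\nu=0$. Because $(0,A_j,D,b,u_0,f,g)$ fulfills the compatibility conditions~\eqref{eq:cc-lin} of order $m$, the same expression with the original coefficients vanishes, so $d_{p,\ep}\to0$ in $\cH^{m-p-1/2}(\partial\E)$ as $\ep\to0$.

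The core step is to construct, for each small $\ep$, a function $w_\ep\in\cH^m(\E)$ with $\|w_\ep\|_{\cH^m(\E)}\le C\sum_{p=0}^{m-1}\|d_{p,\ep}\|_{\cH^{m-p-1/2}(\partial\E)}$, $C$ independent of $\ep$, such that $u_{0,\ep}:=u_0-w_\ep$ restores~\eqref{eq:cc-lin}. Since $u_0\mapsto S^\ep_{m,p}$ is affine, replacing $u_0$ by $u_0-w_\ep$ changes $d_{p,\ep}$ into $d_{p,\ep}-\Delta_{p,\ep}(w_\ep)$, where $\Delta_{p,\ep}$ denotes the linear part; hence it suffices to solve $\Delta_{p,\ep}(w_\ep)=d_{p,\ep}$ for $p=0,\dots,m-1$. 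From~\eqref{def:S-lin} and $A_3=A_3^\co$ one computes $\Delta_{0,\ep}(w)=B_\ep(0)\tr w$ and, inductively,
\[ \Delta_{p,\ep}(w)=(-1)^p B_\ep(0)\bigl(A_{0,\ep}(0)^{-1}A_3^\co\bigr)^p\tr\!\bigl(\partial_3^p w\bigr)+T_{p,\ep}(w), \]
where $T_{p,\ep}(w)$ depends only on $\tr(\partial_3^j w)$ with $j<p$ and is bounded in $\cH^{m-p-1/2}(\partial\E)$ by a constant times $\sum_{j<p}\|\tr(\partial_3^j w)\|_{\cH^{m-j-1/2}(\partial\E)}$, uniformly in $\ep$. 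The decisive structural fact -- established exactly as in the reconstruction of the boundary condition from~\eqref{def:B-i}--\eqref{eq:B-i} and as for the perfectly conducting case in \cite{Sp1}, now keeping $B_0^\co b_\ep(0)B_1^\co$ and using $\eta I\le b_\ep(0)$ together with the uniform upper bounds on $A_{0,\ep}(0)$ and $b_\ep(0)$ -- is that the (pointwise) map $v\mapsto B_\ep(0)(A_{0,\ep}(0)^{-1}A_3^\co)^p v$ from $\RR^6$ onto the tangential fields admits a right inverse bounded uniformly in $\ep$. Consequently the system $\Delta_{p,\ep}(w)=d_{p,\ep}$ is lower triangular in the boundary jet $\bigl(\tr(\partial_3^p w)\bigr)_{p=0}^{m-1}$ and can be solved recursively: having fixed $\tr w,\dots,\tr(\partial_3^{p-1}w)$, we choose the tangential component of $\tr(\partial_3^p w)$ via the uniform right inverse so as to annihilate $d_{p,\ep}-T_{p,\ep}(w)$, and set its normal component to $0$. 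A bounded linear trace extension operator (a right inverse of the boundary-jet map $w\mapsto(\tr(\partial_3^j w))_{j=0}^{m-1}$, e.g.\ of Seeley type) then produces $w_\ep\in\cH^m(\E)$ with these prescribed normal derivatives at $x_3=0$ and with the required norm estimate. Hence $u_{0,\ep}=u_0-w_\ep$ satisfies~\eqref{eq:cc-lin} of order $m$ for the tuple $(0,A_{0,\ep},A_{1,\ep},A_{2,\ep},A_3,D_\ep,b_\ep,u_{0,\ep},f,g)$, and $\|u_{0,\ep}-u_0\|_{\cH^m(\E)}=\|w_\ep\|_{\cH^m(\E)}\to0$ as $\ep\to0$, which gives the claim with a suitable $\ep_0>0$.

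I expect the main obstacle to be the structural invertibility claim in the previous paragraph: one must verify that, for every symmetric $A_{0,\ep}(0)$ with $\eta I\le A_{0,\ep}(0)$ and the given upper bound, and every tangential $b_\ep(0)\ge\eta I$, the operator $B_\ep(0)(A_{0,\ep}(0)^{-1}A_3^\co)^p$ is onto the tangential fields with a right inverse whose norm is controlled only by $\eta$ and the upper bounds -- this is what both makes the triangular recursion solvable and keeps the correction small; the computation parallels \cite{Sp1}/\cite{Sp0}, but the absorbing term $B_0^\co b_\ep(0)B_1^\co$ has to be tracked throughout. A secondary, more routine point is the convergence of the time derivatives of the approximating coefficients at $t=0$ in the norms governing~\eqref{est:S-lin}, on which the smallness of the defects $d_{p,\ep}$ -- and hence of $w_\ep$ -- rests; this is contained in the construction of the $\ep$-coefficients sketched before the lemma.
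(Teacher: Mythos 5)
Your route is workable but genuinely different from the paper's, and it hinges on one claim that you attribute incorrectly and leave unproved. The paper rewrites the conditions as $\cB_\ep A_3^\co\tilde S^{\lin}_{m,p,\ep}(u_0+h)=\partial_t^p g(0)$, absorbs the coefficient difference $(\cB-\cB_\ep)A_3^\co\tilde S^{\lin}_{m,p}(u_0)$ into a term of the form $\cB_\ep A_3^\co\big(0,(\tilde b-\tilde b_\ep)B_1^\co\tilde S^{\lin}_{m,p}(u_0)\big)^T$ after extending $\partial_t^k b(0)$ into $\E$ by H\"ormander's Theorem~2.5.7, and then solves the \emph{stronger} interior equations $A_3^\co\cA_\ep^p a_\ep^p=A_3^\co v_\ep$ (dropping $\cB_\ep$ altogether) via Lemma~4.9 of \cite{Sp1}, which only uses that $A_3^\co(A_{0,\ep}(0)^{-1}A_3^\co)^p$ has the same range as $A_3^\co$ with uniformly bounded solution operator; a second application of the extension theorem converts the jets $a_\ep^p$ into $h_\ep\to0$ in $\cH^m(\E)$. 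You instead solve the exact boundary system, triangular in the jet $(\tr\partial_3^p w)_{p}$, which forces you to invert $B_\ep(0)(A_{0,\ep}(0)^{-1}A_3^\co)^p$ on the tangential fields; this surjectivity is \emph{not} ``established exactly as in \cite{Sp1}'' nor in the reconstruction \eqref{def:B-i}--\eqref{eq:B-i} (the cited lemma concerns $A_3^\co\cA^p$, not $B\cA^p$), so as written this is the gap. The fact is nevertheless true and has a short proof you should supply: if a tangential $\lambda\neq0$ annihilated the range, then $B_\ep(0)^T\lambda=(J_3 b_\ep(0)J_3\lambda,\,-J_3\lambda)$ would lie in $\ker\big((A_3^\co A_{0,\ep}(0)^{-1})^p\big)=A_{0,\ep}(0)\ker A_3^\co=A_{0,\ep}(0)\,(\mathrm{span}(e_3)\times\mathrm{span}(e_3))$ (use $A_{0,\ep}(0)^{\pm1/2}$ to reduce powers of the symmetric matrix $A_{0,\ep}(0)^{-1/2}A_3^\co A_{0,\ep}(0)^{-1/2}$); writing $B_\ep(0)^T\lambda=A_{0,\ep}(0)z$ with $z=(\alpha e_3,\beta e_3)$ gives $z\cdot A_{0,\ep}(0)z=\alpha\,(J_3 b_\ep J_3\lambda)_3+\beta\,(-J_3\lambda)_3=0$ because both components are tangential, hence $z=0$ by positive definiteness, hence $J_3\lambda=0$ and $\lambda=0$, a contradiction; uniformity of the right inverse in $\ep$ (and in $x$) follows from $\eta I\le A_{0,\ep}(0)$, $\eta I\le b_\ep(0)$ and the uniform upper bounds by a compactness argument, and one still has to check that the pointwise right inverse is a multiplier on $\cH^{m-p-1/2}(\partial\E)$, which the regularity of the coefficients at $t=0$ gives for $\tilde m\ge3$. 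With this supplied — together with the convergence $S_{m,p,\ep}\to S_{m,p}$ in $\cH^{m-p}(\E)$, which needs the approximating coefficients to converge in the norms entering \eqref{est:S-lin} and not merely uniformly, a point the paper's proof relies on as well — your argument closes; what the paper's route buys is precisely that it never needs surjectivity of the boundary operator composed with $\cA_\ep^p$, only the range identity for $A_3^\co\cA_\ep^p$ already available from \cite{Sp1}.
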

\begin{proof}
Slightly modifiying the notation in \eqref{eq:S-lin}, \eqref{def:S-lin} and \eqref{eq:cc-lin}, we  set
\begin{align*}
S_{m, p}^\lin(u_0) &= S_{m,p,A_j,D}(0,u_0,f) = \partial_t^p u(0) 
\end{align*}
for $p\in\{0,\dots,m-1\}$ as $t_0 = 0$ and $f$ remain fixed. We further define 
\[\cB=\begin{pmatrix} -I & B_0^\co b(0) \end{pmatrix} \qquad \text{and} \qquad
    \cB_\ep=\begin{pmatrix} -I & B_0^\co b_\ep(0) \end{pmatrix}. \]
 The  compatibility conditions~\eqref{eq:cc-lin} can be rewritten as
\begin{align}\label{eq:cc-lin1}
&\cB A_3^\co  S_{m, p}^\lin(u_0) = \partial_t^p g(0)
     + \nu\times \sum_{k=1}^p \binom{p}{k} \partial^k_t b(0)  B_1^\co S_{m, p-k}^\lin(u_0),\\
  &\cB  A_3^\co    \Big[S_{m, p}^\lin(u_0)  
     + \Big(0 , \sum_{k=1}^p \binom{p}{k} \partial^k_t b(0)  B_1^\co S_{m, p-k}^\lin(u_0)\Big)^T\Big]
       = \partial_t^p g(0) \notag
\end{align}
on $\partial\E$. Here and below, sums from 1 to 0 or from 0 to $-1$ are defined as zero. 
Here we understand $B_1^\co$ just as matrix and not as a trace operator. Since $\partial_t^k b(0) \in \cH^{\tilde{m}-k-1/2}(\RR^2)$ 
for all $k \in \{0, \ldots, \tilde{m}-1\}$, Theorem~2.5.7 in~\cite{Ho} yields a function $\tilde{b} \in \cH^{\tilde{m}}(\RR^3_+)$ 
such that $\tr_{\partial \E} \partial_3^k \tilde{b} = \partial_t^k b(0)$ for all $k \in \{0, \ldots, \tilde{m}-1\}$. 
In particular, we can extend $\partial_t^k b(0)$ by $\partial_3^k \tilde{b}$ to a function in $\cH^{\tilde{m}-k}(\RR^3_+)$. 
We write $\tilde{S}_{m, p}^\lin(u_0)$ for the term $[\dots]$ in \eqref{eq:cc-lin1}, where we extend it 
to a function on $\RR^3_+$ as described above.
Following (4.35) of \cite{Sp1}, this term is expanded as
\beq\label{eq:tilde-S} 
\tilde{S}_{m, p}^\lin(u_0) =\cA^p\partial_3^p u_0 + \sum_{j=0}^{p-1} \tilde{C}_{p,p-j} \partial_3^j u_0
                                + \tilde{B}_pf, 
\eeq
where $\cA:= -A_0(0)^{-1} A_3$, the tangential differential operators $\tilde{C}_{p,p-j}$ belong to
$\cB(\cH^{m-j}(\E),\cH^{m-p}(\E))$ and $\tilde{B}_p$ is an element of $\cB(\cH^m(\Omega), \cH^{m-p}(\E))$. 
These mapping properties can be shown using Lemma~2.1 and (4.35) of \cite{Sp1} and  the regularity 
of $\partial_3^k \tilde{b}$.

By $\cB_\ep$, $\tilde{S}_{m, p,\ep}^\lin$, $\cA_\ep$ and $\tilde{C}_{p,p-j,\ep}$ we denote the variants of the above operators 
for $A_{j,\ep}$, $D_\ep$ and $\tilde b_\ep$, where we obtain $\tilde b_\ep$ as $\tilde b$ above. 
As in the previous paragraph, one sees that the functions $\tilde{C}_{p,p-j,\ep}$ are  bounded in 
$\cB(\cH^{m-j}(\E),\cH^{m-p}(\E))$  uniformly in $\ep>0$ by the properties of the coefficients. 
We further note that $\partial_t^k b_\ep(0)$ converges 
to $\partial_t^k b(0)$ as $\ep \rightarrow 0$, since $b_\ep$ tends to $b$ in $\cH^{\tilde{m}}(\Gamma)$.
The construction of $\tilde{b}_\ep$ respectively $\tilde{b}$ in~\cite{Ho} thus yields that $\tilde{b}_\ep$ 
converges to $\tilde{b}$ in $\cH^{\tilde{m}}(\RR^3_+)$ as $\ep \rightarrow 0$.
One can thus show that in $\cH^{m-p}(\E)$ the maps $\tilde{S}_{m, p,\ep}^\lin(u_0)$ tend to  $\tilde{S}_{m, p}^\lin(u_0)$  
and $(\tilde{b}-\tilde{b}_\ep)B_1 \tilde{S}_{m, p}^\lin(u_0)$ to 0  as $\ep\to0$. 

  We are looking for functions $u_{0,\ep}\in  \cH^m(\E)$ satisfying
  \[\cB_\ep A_3^\co\tilde{S}_{m, p,\ep}^\lin(u_{0,\ep})
  = \partial_t^p g(0) = \cB A_3^\co  \tilde S_{m, p}^\lin(u_0)\quad \text{on } \partial \E. \]
Let $u_{0,\ep}=u_0 +h$ for some $h\in  \cH^m(\E)$. The $\ep$--variant of \eqref{eq:tilde-S} yields 
\[\tilde{S}_{m, p,\ep}^\lin(u_0+h) =\tilde{S}_{m, p,\ep}^\lin(u_0) + \cA_\ep^p \partial_3^p h 
  +\sum_{j=0}^{p-1} \tilde{C}_{p,p-j,\ep} \partial_3^j h.\]
  We thus have to find a map $h\in  \cH^m(\E)$ solving
\begin{align}\label{eq:cc-lin-ep}
\cB_\ep A_3^\co \cA_\ep^p \partial_3^p h 
 &= \cB_\ep A_3^\co  \Big[\tilde{S}_{m, p}^\lin(u_0)
  - \tilde{S}_{m, p,\ep}^\lin(u_0) - \sum_{j=0}^{p-1} \tilde{C}_{p,p-j,\ep} \partial_3^j h \Big]\notag\\
  &\qquad \qquad  + (\cB-\cB_\ep)A_3^\co \tilde{S}_{m, p}^\lin(u_0) \notag \\
  &= \cB_\ep A_3^\co  \Big[\tilde{S}_{m, p}^\lin(u_0)
  - \tilde{S}_{m, p,\ep}^\lin(u_0) - \sum_{j=0}^{p-1} \tilde{C}_{p,p-j,\ep} \partial_3^j h \\
&\qquad\qquad      + \big(0, (\tilde{b}-\tilde{b}_\ep)B_1^\co \tilde{S}_{m, p}^\lin(u_0)\big)^T\Big].\notag
 \end{align}
 
Similar as in the proof of Lemma~4.8 in \cite{Sp1}, we first construct functions $a_\ep^p\in \cH^{m-p}(\E)$
for $p\in \{0,\dots,m-1\}$, $\ep\in(0,\ep_0)$ and some $\ep_0>0$, which satisfy the variant of \eqref{eq:cc-lin-ep}
where we drop $\cB_\ep$ and replace $\partial_3^j h$ by $a_\ep^j\in\cH^{m-j}(\E)$. Moreover, the functions $a_\ep^p$
tend to 0 in $\cH^{m-p}(\E)$ as $\ep\to0$.  

This is done via induction over $p$. For $p=0$, set  $a_\ep^0= (0,(\tilde{b}-\tilde{b}_\ep)B_1^\co u_0)^T$ in $\cH^{m}(\E)$.
Let the functions $a_\ep^k$ be constructed  for $0\le k\le p-1<m-1$. The right-hand side of \eqref{eq:cc-lin-ep} without $\cB_\ep$
is equal to $A_3^\co v_\ep$ for maps $v_\ep$, which tend to 0 in  
$\cH^{m-p}(\E)$ as $\ep\to0$ by the above observations. Lemma~4.9 of \cite{Sp1} now yields  functions $a_\ep^{p}$ 
for $\ep\in(0,\ep_0]$ and some $\ep_0>0$ such that $a_\ep^{p}\to0$ in $\cH^{m-p}(\E)$ as $\ep\to0$ and $A_3^\co \cA_\ep^p a_\ep^{p}= A_3^\co v_\ep$.
 So the maps $a_\ep^p$ exist.

Again by Theorem~2.5.7 in~\cite{Ho}, we can find functions $h_\ep\in\cH^m(\E)$ satisfying $\tr_{\partial \E} \partial_3^p h_\ep = \tr_{\partial\E} a_\ep^p$
for $p\in\{0,\dots, m-1\}$. Moreover, $h_\ep$ converges to 0 in $\cH^m(\E)$ as $\ep\to0$.
The maps $h_\ep$ thus satisfy \eqref{eq:cc-lin-ep} and $u_{0,\ep}= u_0+h_\ep$ fulfills the variant of 
\eqref{eq:cc-lin1} for the approximating coefficients. 
\end{proof}

Based on the results established so far, we can now derive the desired regularity result.
Recall that we allow for $G=\E$. 

\begin{thm}\label{thm:reg}
Let $T\in (0,T')$, $\rho,\eta > 0$,  $r \geq r_0 > 0$, and $m \in \NN $. Take coefficients 
$A_0 \in F^{\tilde{m}}_{\eta}(J\times G)$,  $D \in F^{\tilde{m}}(J\times G)$, $A_3 = A_3^{\co}$, and 
$b\in F_{\cH,\eta}^{\tilde{m}}(J\times \Sigma)$. If $G = \E$, pick $A_1, A_2 \in F^{\tilde{m}}_\cf(\RR^3_+)$. 
Otherwise, let $A_1 = A_1^\co$ and $A_2 = A_2^\co$. Assume that the coefficients satisfy
\begin{align*}
	&\|A_i\|_{F^{\tilde{m}}(J\times G)} \leq r, \quad \|D\|_{F^{\tilde{m}}(J\times G)} \leq r,\quad 
	\|b\|_{F^{\tilde{m}-1}(J\times \Sigma)}\le r,
	\quad [b]_{\cH^{\tilde{m}}(J\times \Sigma)}\le \rho, \\ 
&\max\{\|A_i(0)\|_{F^{\tilde{m}-1,0}(J\times G)},\max_{j\in\{1,\dots, m-1\}}
     \|\partial_t^j A_0(0)\|_{ \cH^{\tilde{m}-1-j}(G)}\} \leq r_0,\\
	&\max\{\|D(0)\|_{F^{\tilde{m}-1,0}(J\times G)},\max_{j\in\{1,\dots, m-1\}} 
	   \|\partial_t^j D(0)\|_{ \cH^{\tilde{m}-1-j}(G)}\} \leq r_0,
\end{align*}
for $i \in \{0,1,2\}$.
Choose data $u_0 \in \cH^m(G)^6$,  $f \in \cH^m(J\times G)^6$,  and $g \in \cH^m(J\times \Sigma)^3$ with 
$g\cdot \nu =0$ such that the tuple $(0, A_0,A_1, A_2,A_3,D,b,u_0,f,g)$ 
 fulfills the compatibility conditions~\eqref{eq:cc-lin} of order $m$.

 Then there is a unique solution $u\in G^{m}_\Sigma(J\times G)$ of~\eqref{eq:maxwell-lin} and there is a number 
 $\gamma_m= \gamma_m( \eta,r,T')\ge 1$ such that
\begin{align} \label{est:mainG}
\|u&\|^2_{G^{m}_\gamma(J\times G)} + \|\tr_\tau u\|_{\cH^m(J\times \Sigma)}^2\\
  &\leq  (C_{m,0}+TC_m) \e^{mC_1T} \Big(\sum_{j = 0}^{m-1}\|\partial_t^j f(0)\|^2_{\cH^{m-1-j}(G)} 
  + \|u_0\|^2_{\cH^m(G)}+ \|g\|^2_{\cH^m_\gamma(J\times \Sigma)} \notag \\
  &\qquad +  \delta_{m>2}\rho^2 \,\|B_1 u\|_{L^\infty_\gamma(J\times \Sigma)}^2\Big) +
    \frac{C_m}{\gamma} \|f\|_{\cH^m_{\gamma}(J\times G)}^2 \nonumber
\end{align}
for all $\gamma \geq \gamma_m$ and with constants $C_i = C_i(\eta, r, T')\ge 1$ for $i\in \{1,m\}$ 
and  $C_{m,0} = C_{m,0}(\eta, r_0,\|b\|_\infty)\ge1$. If $b$ even belongs to 
$F_{\eta,\tau}^{\tilde{m}}(J\times \Sigma)^ {3\times3}$ with norm less or equal $r$, then one can set $\rho=0$ in the 
above inequality. For $m\ge 3$ and $r\ge \rho$ we further have the estimate
\begin{align} \label{est:mainG-cor} 
\|&u\|^2_{G^{m}_\gamma(J\times G)} + \|\tr_\tau u\|_{\cH^m(J\times \Sigma)}^2\\
  &\leq  (C_{m,0}+TC_m) \e^{mC_1T} \Big(\sum_{j = 0}^{m-1}\|\partial_t^j f(0)\|^2_{\cH^{m-1-j}(G)} 
  + \|u_0\|^2_{\cH^m(G)}+ \|g\|^2_{\cH^m_\gamma(J\times \Sigma)}\Big) \notag \\
  &\qquad + C_m\e^{(m+2)C_1T}\big( \|u_0\|^2_{\cH^2(G)} + \|f(0)\|_{\cH^{1}(G)}^2 + \|\partial_t f(0)\|^2_{L^2(G)} 
  + \|g\|^2_{\cH^2_\gamma(J\times \Sigma)}\big) \notag\\ 
  &\qquad +    \frac{C_m}{\gamma} \|f\|_{\cH^m_{\gamma}(J\times G)}^2.  \notag
\end{align}
\end{thm}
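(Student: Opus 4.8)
The plan is to prove the theorem in two stages: first for the half\-space system~\eqref{eq:maxwell-lin-i} on $G=\E$, and then to transfer it to a general domain $G$ by the localization set up in Section~\ref{sec:aux}. Uniqueness is the easy part throughout: any solution in $G^m_\Sigma$ is in particular an $L^2$\-solution of the corresponding system, so Proposition~\ref{prop:L2} already gives uniqueness on $\E$, and on a general $G$ one either localizes the difference of two solutions or runs the energy identity~\eqref{eq:energy} adapted to $G$.

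For existence on $\E$ I would start from the $L^2$\-solution $u$ of~\eqref{eq:maxwell-lin-i} furnished by Proposition~\ref{prop:L2} and regularize the coefficients. Replacing $A_j$, $D$, $b$ by the smooth approximants $A_{j,\ep}$, $D_\ep$, $b_\ep$ constructed before Lemma~\ref{lem:cc-ep}, and $u_0$ by the corrected data $u_{0,\ep}\to u_0$ in $\cH^m(\E)$ from Lemma~\ref{lem:cc-ep} (which restore the compatibility conditions~\eqref{eq:cc-lin} of order $m$ for the approximating tuple), Proposition~\ref{prop:reg-smooth} applies and yields that the associated $L^2$\-solutions $u_\ep$ lie in $G^m_\Sigma(\Omega)$. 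The next step is to bound $u_\ep$ in $G^m_\Sigma(\Omega)$ uniformly in $\ep$ via Theorem~\ref{thm:est}: the approximants are uniformly bounded in the relevant $F$\-norms, so the only delicate term on the right\-hand side is $\delta_{m>2}\rho^2\|B_1 u_\ep\|_{L^\infty_\gamma(\Gamma)}^2$. For $m\le 2$ it is absent; for $m\ge 3$ I would control it by the trace theorem and Sobolev embedding through $c\,\|u_\ep\|_{G^2_\gamma(\Omega)}^2$ and bound the latter using Theorem~\ref{thm:est} with $m=2$, where $\delta_{m>2}=0$ and the right\-hand side consists of data only (with a constant depending on $r$, which is harmless here). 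This yields an $\ep$\-uniform bound for $u_\ep$ in $G^m_\Sigma(\Omega)$.

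A weak\-$*$ compactness argument then gives, along a subsequence, a limit $\bar u$ in $\tilde G^m_\Sigma(\Omega)$; the uniform convergence of the coefficients and of $u_{0,\ep}$ lets one pass to the limit in~\eqref{eq:maxwell-lin-i}, and the $L^2$\-uniqueness of Proposition~\ref{prop:L2} identifies $\bar u$ with $u$. Hence $u\in G^m(\Omega)$, and applying the apriori estimate of Theorem~\ref{thm:est} directly to $u$ upgrades it to $G^m_\Sigma(\Omega)$ and yields~\eqref{est:mainG} (alternatively, \eqref{est:mainG} follows from weak lower semicontinuity of the norms together with the uniform bound of the previous step); estimate~\eqref{est:mainG-cor}, valid for $m\ge 3$ and $r\ge\rho$, then follows from Corollary~\ref{cor:est}. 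If $b\in F^{\tilde{m}}_{\eta,\tau}(\Gamma)^{3\times3}$, the final assertions of Lemma~\ref{lem:tang-est} and Theorem~\ref{thm:est} allow one to set $\rho=0$.

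Finally, for a general domain $G$ I would follow the localization scheme of Section~\ref{sec:aux}, exactly as in Section~5 of~\cite{Sp0} and Section~3 of~\cite{SS}: cover $\ol G$ by $U_0,\dots,U_N$, transform the problem on each $U_i$ with $i\ge 1$ into a half\-space system~\eqref{eq:maxwell-lin-i} with the localized coefficients~\eqref{def:A-i}, \eqref{def:B-i} and data~\eqref{def:data-i}, and treat the interior chart $i=0$ as a full\-space problem as in~\cite{Sp1}. The compatibility conditions~\eqref{eq:cc-lin} on $G$ transform into those for~\eqref{eq:maxwell-lin-i} by the formulas of Section~\ref{sec:aux}. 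Applying the half\-space result piecewise, reconstructing the boundary condition on $\Sigma$ from~\eqref{eq:B-i} on $\omega_i^{-1}(\{1\})$, and reassembling with the partition of unity gives $u\in G^m_\Sigma(J\times G)$ and the two estimates; the coupling between the pieces through the dependence of $f^i$ on the solution $v$ in~\eqref{def:data-i} is absorbed as in the cited references. The main obstacle in the whole argument is the existence step on $\E$: verifying that the smooth approximants meet all hypotheses of Proposition~\ref{prop:reg-smooth} while keeping the uniform $F$\-bounds, and closing the estimate for the $\rho^2\|B_1 u_\ep\|_{L^\infty}$ term without circularity by first running the $m=2$ case; the reassembly on $G$ is otherwise routine but notationally heavy.
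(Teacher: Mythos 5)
Your route is the same as the paper's: approximate the coefficients, repair the initial value with Lemma~\ref{lem:cc-ep}, get smooth solutions from Proposition~\ref{prop:reg-smooth}, bound them uniformly via Theorem~\ref{thm:est} (handling the $\delta_{m>2}\rho^2\|B_1u_\ep\|^2_{L^\infty_\gamma}$ term through the $m=2$ estimate is exactly the content of Corollary~\ref{cor:est}, which the paper also invokes), pass to a weak$^*$ limit, and finally localize back to $G$. Uniqueness via Proposition~\ref{prop:L2} is also as in the paper.

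There is, however, a genuine gap at the sentence ``Hence $u\in G^m(\Omega)$.'' Banach--Alaoglu only places the limit in $\tilde G^m_\Sigma(\Omega)=\bigcap_{j}W^{j,\infty}(J,\cH^{m-j})$ (with $\cH^m$ trace regularity): the time derivatives up to order $m$ are essentially bounded, not continuous, in time, and the identification with the $L^2$-solution does not improve this. You cannot close the step by ``applying the apriori estimate of Theorem~\ref{thm:est} directly to $u$'' either, because that theorem assumes $u\in G^m(\Omega)$ as a hypothesis; it only adds the boundary trace regularity and the estimate, it does not produce time continuity from membership in $\tilde G^m_\Sigma$. This is precisely where the paper does its remaining work: one applies $\partial_t^{m-1}$ to \eqref{eq:maxwell-lin-i}, checks that the differentiated data satisfy the compatibility conditions \eqref{eq:cc-lin} of order $1$ (using compatibility of the original data up to order $m$), shows $\partial_t^{m-1}u\in C(\ol J,\cH^1_\ta(\E))$ as in step~2) of Lemma~\ref{lem:reg-tan}, and then combines Lemma~4.1 of \cite{Sp1} with Lemma~\ref{lem:reg-time} to conclude $\partial_t^{m-1}u\in G^1_\Sigma(\Omega)$, after which an induction (as in Theorem~4.10 of \cite{Sp1}) yields $u\in G^m_\Sigma(\Omega)$. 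Without this argument (or an equivalent one) your construction only delivers a $\tilde G^m_\Sigma$-solution, not the asserted $G^m_\Sigma$-solution; the rest of your outline, including the final localization to general $G$, is consistent with the paper.
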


\begin{proof}
We only sketch the proof  since it is very similar to those of Theorems~4.10 and 1.1
of \cite{Sp1}. We first treat the localized problem \eqref{eq:maxwell-lin-i} on $\E$.
We take approximating data as in Lemma~\ref{lem:cc-ep} for $\ep=\frac1n$. Proposition~\ref{prop:reg-smooth}
then provides solutions $u_n\in G^m_\Sigma(\Omega)$ which are uniformly bounded in this 
space due to \eqref{est:main} and~\eqref{est:main-cor}. From  Banach-Alaoglu we thus obtain  a weak$^*$ accumulation
point $u$ of $(u_n)$ which belongs to $\tilde{G}^m_\Sigma(\Omega)$.  We apply  \eqref{est:apriori-0} with the given coefficients 
to the difference $u_n-u$. By means of  the uniform convergence of 
the coefficients,  it follows that the maps $u_n$ tend to $u$ in  $G^0_\Sigma(\Omega)$. Using this fact, one sees
that $u$ satisfies \eqref{eq:maxwell-lin-i}. 

To show that $u\in G^m_\Sigma(\Omega)$, one first applies  $\partial^{m-1}_t$ to the system \eqref{eq:maxwell-lin-i}.
The resulting data satisfy the  compatibility conditions~\eqref{eq:cc-lin} of order 1 as
the given data fulfill them up to order $m$. 
 Since $\partial^{m-1}_t u\in\tilde{G}^1_\Sigma(\Omega)$,  as in step~2) 
of the proof of Lemma~\ref{lem:reg-tan} we can  deduce that $\partial^{m-1}_t u$ belongs to $C(\ol{J}, \cH^1_\ta(\E))$. 
By Lemma~4.1 of \cite{Sp1}, the function $\partial^{m-1}_t u$ is an element of $G^1(J'\times \E)$
provided that $\partial_t^{m-1} u\in C^1(\ol{J'}, L^2(\E))$ for any open interval $J'\sub J$.
 Our Lemma~\ref{lem:reg-time} then shows that $\partial^{m-1}_tu$ is contained in  $G^1_\Sigma(\Omega)$.
As in  the proof of Theorem~4.10 in \cite{Sp1}, one now inductively infers that $u$ belongs to $G^m_\Sigma(\Omega)$.

Finally, one passes to the domain $G\neq \RR_+^3$ by a localization argument. See steps IV--VI of the proof of Theorem~5.6 
in \cite{Sp0} or of Theorem~3.1 in \cite{SS}.
\end{proof}

If $G$ is unbounded, the above  result imposes decay of the derivatives of $A_0$ and $D$ as $x\to\infty$.
Actually, if these derivatives are bounded one obtains the same results much easier. As in \cite{Sp1}, we have thus 
focused on the case treated in the theorem and describe the easy extension in the next result.

\begin{rem}\label{rem:bdd}
Let $G$ be unbounded. As in  Remark~1.2 of \cite{Sp1} we can weaken the regularity assumptions  in 
Theorem~\ref{thm:reg} to  $A_0, D\in W^{1,\infty}(J\times G)$ and
\beq\label{def:f-var}
\forall \;\alpha\in\NN_0^4 \ \text{ with } 1\le |\alpha|\le m:  \
  \partial^\alpha A_0,\partial^\alpha D\in L^\infty(J,L^2(G))+L^\infty(J\times G).
\eeq  
One further has to assume that the corresponding norms of $A_0$ and $D$ are less or equal $r$, and 
$\partial^\alpha A_0(0)$ and $\partial^\alpha D(0)$ are bounded in $L^2(G)+L^\infty(G)$ by $r_0$ 
for all $|\alpha|\le m-1$. Here one can also replace $G$ by $\E$. The conditions on $b$ remain unchanged.
Throughout, in new terms involving bounded parts of $\partial^\alpha A_0$ and 
$\partial^\alpha D$ these derivatives can easily be  estimated by their sup-norms.\hfill $\lozenge$
\end{rem}
 
\section{Local existence and uniqueness}\label{sec:local}

 The apriori estimate of Theorem~\ref{thm:reg}  does not allow us to treat the nonlinear absorbing boundary
 conditions as described in \eqref{eq:maxwell} and \eqref{ass:main} in full generality. The problem arises
in the terms with highest derivatives of $b=\zeta(B_1 \hat{u})$ for a function
$\hat{u}\in \tilde{G}_{\Sigma}^m(J\times G)$ with range in $\mathcal{U}$. For simplicity we first look at 
the problem on $\Gamma$, the  case $J\times \Sigma$ then follows by the localization procedure described in 
Section~\ref{sec:aux} and in \cite{Sp0}. Lemma~2.1 in \cite{Sp2} yields the formula 
\begin{align}\label{eq:zeta(v)}
\partial^\alpha \zeta(B_1 \hat{u}) = \sum_{\substack{\beta, \gamma \in \NN_0^3, \beta_0 = 0 \\  \beta + \gamma = \alpha}} \,
\sum_{1\leq j\leq |\gamma|}\,&\sum_{\substack{\gamma_1,\ldots,\gamma_j\in\NN_0^3\setminus\{0\}\\ 
   \sum\gamma_i= \gamma}}	 \, \sum_{l_1,\ldots,l_j = 1}^2 C(\alpha, \gamma_1, \ldots, \gamma_j) \\
	  &\hspace{2em}\cdot (\partial_{y_{l_j}} \cdots \partial_{y_{l_1}} \partial_x^{(\beta_1,\beta_2)} \zeta)(B_1 \hat{u})
	      \prod_{i=1}^j \partial^{\gamma_i} (B_1 \hat{u})_{l_i},\notag
	\end{align}
for $\alpha \in \NN_0^3$ with $|\alpha|\le m$. We take $m\ge3$. 
This expression can be written as a sum $S_{\textrm{main}}$ of 
the terms with  $\alpha =\gamma$ and $j=1$ plus the sum $S_{\textrm{rem}}$ of the other terms. 
The summands in   $S_{\textrm{rem}}$  can be estimated  by the norm in $\cH^1(\E)$ of 
the product $\prod_{i=1}^j \partial^{\gamma_i} (B_1 \hat{u})_{l_i}$ using the trace theorem. 
The product rules in Lemma~2.1 in \cite{Sp1} (and localization) lead to  the inequality
\beq \label{est:rho0}
\|S_{\textrm{rem}}\|_{L^2(J \times\Sigma)}^2 \le T\,\|S_{\textrm{rem}}\|_{L^\infty(J,L^2(\Sigma))}^2
\le cT\,(1+\|\hat u\|_{G^m(J\times G)}^{2m})=\tilde{C}(R)T,
\eeq
where $\|\hat{u}\|_{G^m_\Sigma(J\times G)}\le R$.
If $|\alpha|<m$, one can estimate the full function $\partial^\alpha \zeta(B_1 \hat{u})$ in this way.

To treat the term $S_{\textrm{main}} = \partial_\xi \zeta(B_1 \hat{u}) \partial^\alpha (B_1 \hat{u})$ in the case $|\alpha|=m$,  we define the quantities
\beq\label{def:z}
z_0(\ol{\kappa})= \max_{x\in\Sigma, |\xi|\le\ol{\kappa}} |\partial_\xi\zeta(x,\xi)|, \qquad
z(\ol{\kappa})= z_0(\ol{\kappa})\,\ol{\kappa}
\eeq
for $\ol{\kappa}\ge0$. 
Let $\|B_1 \hat{u}(0)\|_{L^\infty(J\times \Sigma)}\le \ol{\kappa}$. Assuming also that $\|\hat{u}\|_{G^m_\Sigma(J\times G)}\le R$, 
we estimate
\begin{align*}
 \|\partial_\xi \zeta(B_1 \hat{u})\|_{L^\infty(J \times \Sigma)} &\leq \|\partial_\xi \zeta(B_1 \hat{u}(0))\|_{L^\infty(\Sigma)} + T \|\partial_\xi^2 \zeta(B_1 \hat{u}) B_1 \partial_t \hat{u} \|_{L^\infty(J \times \Sigma)} \\
    &\leq z_0(\ol{\kappa}) + \tilde{C}(R) T \|\hat{u}\|_{G^3(J \times G)} \leq z_0(\ol{\kappa}) + \tilde{C}(R) T.
\end{align*}
We derive
\[ \|S_{\textrm{main}} \|_{L^2(J\times \Sigma)}^2
  \le c(z_0(\ol{\kappa})^2 + \tilde{C}(R) T^2) \,\|B_1 \hat{u}\|_{\cH^m(J\times \Sigma)}^2.  \]
It thus follows
\beq\label{est:rho}
\sum_{|\alpha|=m}\|\partial^\alpha \zeta(B_1 \hat{u})\|_{L^2(J\times \Sigma)}^2  
 \le \ol{C} (T + z_0(\ol{\kappa})^2). 
\eeq
We further take functions $\hat{v}$ and $v$ with ranges in $\cU$,
$\|B_1 \hat{v}(0)\|_{L^\infty(J\times \Sigma)}\le \ol{\kappa}$, 
$\|\hat{v}\|_{G^m_\Sigma(J\times G)}\le R$, and analogously for $v$.
In a similar way we estimate 
\beq\label{est:rho1}
\|(\zeta(B_1 \hat{u}) -\zeta(B_1\hat {v}))B_1v\|_{\cH^{m-1}_\gamma (J\times \Sigma)}^2  
 \le \ol{C}(T + z(\ol{\kappa})^2) \,\|\hat{u}-\hat{v}\|_{G^{m-1}_{\Sigma,\gamma}}^2\,.
\eeq
The constant $\ol{C}$ depends on $R$, $\zeta$, and a time $T' > T$. In the fixed point argument, one part of the resulting 
right-hand sides can be made small choosing a small time interval $(0,T)$ depending 
on the radius $R$. For the other one we will have to assume that $z(\ol{\kappa})$ is small, which either means 
that we are close to a linear boundary condition or that we deal with electric fields having uniformly small 
tangential traces initially. 
In the linear case, where $\zeta$ does not depend on the state 
$u$, the number $z(\ol{\kappa})$ is even 0. Here we actually allow for time depending coefficients $b=\zeta$ 
in $F^{\tilde{m}}_{\eta,\tau}(J\times \Sigma)$, see \eqref{ass:main1}. Inequalities like \eqref{est:rho} and \eqref{est:rho1} 
are used several times below.

Exactly as in Remark~2.5 of \cite{Sp2},  for unbounded $G$ in our proofs we will make a simplifying assumption 
on the coefficients $\chi$ and $\sigma$ in order to avoid certain easier terms in the calculations.

\begin{rem}\label{rem:bdd1}
Let $G$ be unbounded, $m\ge 3$, $\hat u\in \tilde{G}^m(J\times G)$, and $\chi$ and $\sigma$ be given 
by \eqref{ass:main} or \eqref{ass:main1}. As noted in Remark~\ref{rem:bdd}, for our linear 
results we can admit coefficients $ A_0=\chi(\hat u)$ and  $D= \sigma(\hat u)$ belonging to the space
described in \eqref{def:f-var}. The additional bounded terms can easily be estimated 
in each computation. Without loss of generality, in the proofs we will therefore exclude such terms 
by imposing extra decay on	the space derivatives of $\chi$ and $\sigma$ as $|x| \rightarrow \infty$. 
 More precisely, for all multiindices $\alpha \in \NN_0^9$ with $\alpha_4 = \ldots = \alpha_9 = 0$ and  
 $1 \leq |\alpha| \leq m$, $R > 0$, $V \Subset \mathcal{U}$,	and $v \in L^\infty(J, L^2(G))$ with range in $V$ 
 and	$\|v\|_{L^\infty(J, L^2(G))} \leq R$ we require that
	\begin{align}\label{ass:f-var}
&(\partial^\alpha \chi)(v), (\partial^\alpha \sigma)(v) \in L^\infty(J, L^2(G)), \nonumber\\
&\|(\partial^\alpha \chi)(v)\|_{L^\infty(J, L^2(G))} + \|(\partial^\alpha \sigma)(v)\|_{L^\infty(J,L^2)(G))} \leq C,
	\end{align}
	where $C = C(\chi,\sigma,m,R,V)$.	With this assumption, Lemma~2.1 of \cite{Sp2} yields
	that $\chi(\hat{u})$ and $\sigma(\hat{u})$ are contained in  $F^m(J\times G)$. \hfill $\lozenge$
\end{rem}

We start with the uniqueness of solutions to \eqref{eq:maxwell}.

\begin{lem} \label{lem:unique}
 Let $t_1> t_0$ in $\RR$ and  $J = (t_0, t_1)$.
 Assume that either \eqref{ass:main} or  \eqref{ass:main1} is valid. 
 Let $u_1,u_2 \in G^3_\Sigma(J\times G)$ solve \eqref{eq:maxwell} with inhomogeneity $f$, 
 boundary value $g$, and initial value $u_0$ at initial time $t_0$. If assumption \eqref{ass:main}
 is satisfied, we require that $z_0(\ol{\kappa}_1)^2 \ol{\kappa}_2^2 \le (2C_0)^ {-1}$ 
 where $\ol{\kappa}_1 \geq \max_{j\in\{1,2\}}\|B_1 u_j\|_{L^\infty(J \times \Sigma)}$, 
 $\ol{\kappa}_2 \geq \min_{j\in\{1,2\}}\|B_1 u_j\|_{L^\infty(J \times \Sigma)}$,
 and $C_0$ is taken from \eqref{est:unique} and depends on the norm of $u_1$ and $u_2$ in $W^{1,\infty}(J\times G)$
 and on the lower bound $\eta$ of $\chi$ and $\zeta$. Then $u_1 = u_2$.
\end{lem}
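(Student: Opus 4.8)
The plan is to run the classical $L^2$ energy method for the difference $w=u_1-u_2$, treating carefully the two genuinely nonlinear features: the quasilinear coefficient $\chi(u)$ and the state-dependent boundary coefficient $\zeta(B_1u)$.

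First I would reduce to $t_0=0$ by time translation, and, since the claim is symmetric in $u_1,u_2$, relabel the two solutions so that $\|B_1u_2\|_{L^\infty(J\times\Sigma)}\le\|B_1u_1\|_{L^\infty(J\times\Sigma)}$; thus $\|B_1u_1\|_{L^\infty(J\times\Sigma)}\le\ol\kappa_1$ and $\|B_1u_2\|_{L^\infty(J\times\Sigma)}\le\ol\kappa_2$. Put $w=u_1-u_2\in G^3_\Sigma(J\times G)$. Since $G^3\hra W^{1,\infty}$ on $J\times G$ (via the embeddings $\cH^3(G)\hra C^1(\ol G)$ and $\cH^2(G)\hra L^\infty(G)$), both $u_1,u_2$ are bounded in $W^{1,\infty}(J\times G)$ with ranges in a fixed compact subset of $\cU$, and $\tr_\tau w\in\cH^3(J\times\Sigma)$. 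Subtracting the two copies of \eqref{eq:maxwell} and writing $\chi(u_1)\partial_tu_1-\chi(u_2)\partial_tu_2=\chi(u_1)\partial_tw+(\chi(u_1)-\chi(u_2))\partial_tu_2$ and analogously for $\sigma$, the fundamental theorem of calculus (the segment from $u_2$ to $u_1$ lies in $\cU$ by convexity) turns the last differences into $L^\infty$ multiples of $w$. Hence $w$ solves \eqref{eq:maxwell-lin} with $A_0=\chi(u_1)=A_0^T\ge\eta I$ in $W^{1,\infty}$, $A_j=A_j^\co$ for $j\in\{1,2,3\}$, a coefficient $D\in L^\infty(J\times G)$ collecting $\sigma(u_1)$ and the two remainders, vanishing initial value and inhomogeneity, the boundary coefficient $b=\zeta(B_1u_1)=b^T\ge\eta I$ in $L^\infty_\tau(J\times\Sigma)$, and boundary datum
\[ g_w:=\nu\times\big((\zeta(B_1u_1)-\zeta(B_1u_2))B_1u_2\big), \qquad g_w\cdot\nu=0 . \]
Since $A_1,A_2,A_3$ are constant, $\DDiv A=\partial_tA_0=\partial_u\chi(u_1)\,\partial_tu_1\in L^\infty$, so $r:=\|D-\tfrac12\DDiv A\|_\infty<\infty$ is controlled by $\eta$, the fixed maps $\chi,\sigma$, and $\|u_1\|_{W^{1,\infty}},\|u_2\|_{W^{1,\infty}}$, and all hypotheses of the basic $L^2$ estimate are met.

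Next I would bound the boundary term. Applying the mean value theorem to $v\mapsto\zeta(x,v)$ on the segment from $B_1u_2(x)$ to $B_1u_1(x)$ (which lies in $B_1(x)\cU$ and has norm $\le\ol\kappa_1$) gives $|\zeta(B_1u_1)-\zeta(B_1u_2)|\le z_0(\ol\kappa_1)\,|B_1w|$ pointwise on $J\times\Sigma$; together with $|B_1u_2|\le\ol\kappa_2$ and $|B_1w|=|\tr_tw^1|=|\tr_\tau w^1|$ this yields $\|g_w\|_{L^2_\gamma(J\times\Sigma)}\le z_0(\ol\kappa_1)\,\ol\kappa_2\,\|\tr_\tau w\|_{L^2_\gamma(J\times\Sigma)}$. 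I then apply the analogue of \eqref{est:apriori-0} on $J\times G$ (obtained by the localization of Section~\ref{sec:aux}, or directly from the computation leading to \eqref{eq:energy}) to $w$; since $w\in G^3_\Sigma$ it has all the regularity required by that computation, and with $w(0)=0$ and $f=0$ it gives, for all $\gamma\ge\gamma_0$ and a constant $C_0$ depending only on $\eta$, $\ol\kappa_1$ (through $\|b\|_\infty$), and $\|u_1\|_{W^{1,\infty}},\|u_2\|_{W^{1,\infty}}$ (through $r$ and $\gamma_0$),
\begin{equation}\label{est:unique}
 \gamma\,\|w\|_{L^2_\gamma(J\times G)}^2+\|\tr_\tau w\|_{L^2_\gamma(J\times\Sigma)}^2\le C_0\,\|g_w\|_{L^2_\gamma(J\times\Sigma)}^2\le C_0\,z_0(\ol\kappa_1)^2\ol\kappa_2^2\,\|\tr_\tau w\|_{L^2_\gamma(J\times\Sigma)}^2 .
\end{equation}
Under \eqref{ass:main1} the coefficient $\zeta=b$ is independent of the state, so $g_w=0$ and \eqref{est:unique} already forces $w=0$. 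Under \eqref{ass:main}, the hypothesis $z_0(\ol\kappa_1)^2\ol\kappa_2^2\le(2C_0)^{-1}$ lets me absorb the last term into the left-hand side, giving $\gamma\|w\|_{L^2_\gamma(J\times G)}^2+\tfrac12\|\tr_\tau w\|_{L^2_\gamma(J\times\Sigma)}^2\le0$, hence $w=0$, i.e.\ $u_1=u_2$.

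I expect the only delicate point to be the boundary term. Indeed $g_w$ is of the \emph{same} order in $w$ as $\tr_\tau w$, and, as stressed in the introduction, the basic $L^2$ estimate bounds $\|\tr_\tau w\|_{L^2(J\times\Sigma)}$ by $\|g_w\|_{L^2(J\times\Sigma)}$ with a constant that cannot be made small and does not improve on short time intervals; consequently no Gronwall argument in time can remove it. One therefore genuinely needs smallness of $z_0(\ol\kappa_1)\ol\kappa_2$ (a quantity of the type $z(\ol\kappa)$) to close \eqref{est:unique}, which is exactly the source of the hypothesis and is vacuous in the linear-coefficient case. The interior modifications (absorbing the $\chi$- and $\sigma$-differences into an $L^\infty$ coefficient $D$) are routine, as the $L^2$ estimate tolerates arbitrary $L^\infty$ zeroth-order terms once $\gamma$ is chosen large.
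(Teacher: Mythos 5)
Your proposal is correct, and its core is the same as the paper's: linearize around $u_1$, view $w=u_1-u_2$ as a solution of the linear problem with coefficient $A_0=\chi(u_1)$ and boundary coefficient $b=\zeta(B_1u_1)$, estimate the boundary datum by $z_0(\ol\kappa_1)\,\ol\kappa_2\,\|\tr_\tau w\|_{L^2_\gamma}$ exactly as in \eqref{est:unique-g}, and absorb it using the smallness hypothesis. Where you differ is in the treatment of the interior quasilinear differences and in the global structure of the argument. The paper keeps $(\chi(u_2)-\chi(u_1))\partial_t u_2+(\sigma(u_2)-\sigma(u_1))u_2$ as an inhomogeneity $f_1$, bounds it by $c(T'-T_0)\|w\|^2_{G^0_\gamma}$ as in \eqref{est:unique-f}, and therefore runs a continuation argument: it defines $T_0$ as the supremum of coincidence times, works on a short interval $(T_0,T')$ so that the time factor makes the interior contribution absorbable, and derives a contradiction. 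You instead write these differences, via the fundamental theorem of calculus and convexity of $\cU$, as $L^\infty$ matrices acting on $w$ and fold them into the zeroth-order coefficient $D$; since Proposition~\ref{prop:L2} tolerates arbitrary $L^\infty$ zeroth-order terms at the price of enlarging $r=\|D-\tfrac12\DDiv A\|_\infty$ and hence $\gamma_0$ (but not the constant $c(\eta,\|b\|_\infty)$ in front of the boundary term), you can conclude in one step on all of $J$, with no continuation and no smallness of the time interval. This is a clean simplification; the only price is that your $\gamma_0$ depends on the $W^{1,\infty}$ norms through the enlarged $D$, which is consistent with the dependence of $C_0$ and $\gamma$ stated in the lemma. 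Your handling of the linear case \eqref{ass:main1} ($g_w=0$, so no smallness needed), the relabeling so that $u_2$ has the smaller tangential sup-norm, and the appeal to the $L^2$ estimate on $J\times G$ (by localization or by the direct computation behind \eqref{eq:energy}, for which $w\in G^3_\Sigma$ has ample regularity) all match the paper's usage.
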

\begin{proof}
We focus on the assumption \eqref{ass:main} of a nonlinear boundary condition, since the linear one
in \eqref{ass:main1} is easily treated as in Lemma~3.2 of \cite{Sp2}.
Let $T_0\in \ol{J}$ be the supremum of all $t\in \ol{J}$ such that $u_1 = u_2$ on  $[t_0, t]$.
The two functions coincide on $[t_0, T_0]$ by their continuity.

We suppose that $T_0 < t_1$. We take a time $T' \in (T_0, t_1)$ and set $J'=(T_0, T')$.  We fix a compact set $V\subset \cU$ 
containing the ranges of $u_1$ and $u_2$ on $J'$. The maps $u_1$ and $u_2$
in $G^3_\Sigma(J'\times G)$ both solve \eqref{eq:maxwell} on $\ol{J'}$ with the same initial value $u_1(T_0)$, 
inhomogeneities $f$ and $g$, and the operators $L_j = L(\chi(u_j),A_1^{\co}, A_2^{\co}, A_3^{\co}, \sigma(u_j))$ 
and $B^j=B(u_j)$ for $j=1$, respectively $j=2$. Without loss of generality we assume that 
$\|B_1 u_2\|_{L^\infty(J \times \Sigma)} = \min_{j \in \{1,2\}} \|B_1 u_j\|_{L^\infty(J \times \Sigma)}$.
The difference $u_1-u_2$ fulfills
\begin{align*}
L_1 (u_1-u_2) &= (\chi(u_2)- \chi(u_1)) \partial_t u_2 + (\sigma(u_2)- \sigma(u_1))u_2=:f_1\qquad \text{on } J'\times G,\\
B_1(u_1-u_2) &= ((\zeta(B_1u_2)-\zeta(B_1u_1))B_1 u_2)\times \nu=:g_1 \qquad \text{on } J'\times \Sigma,\\
(u_1-u_2)(T_0)&=0 \qquad \text{on } G.
\end{align*}
Lemma~2.1 of \cite{Sp2} and Sobolev's embedding theorem
 yield that $\chi(u_j)$ and  $\sigma(u_j)$ are elements of $F^3(J\times G)$,  and $\zeta(u_j)$  of  
 $F^3_\cH(J\times \Sigma)$.  Moreover, $\chi(u_1)$ and  $\zeta(u_1)$ are symmetric and bounded from below by $\eta>0$.
 Let $r$ (resp.\ $R$) be the maximum of the norms of $u_1$ and $u_2$ in $W^{1,\infty}(J\times G)$ 
 (resp.\ in $G^3_\Sigma(J\times G)$). Then  the Lipschitz norms of $\chi(u_1)$ and  $\sigma(u_1)$ and the sup-norm of $\zeta(u_1)$
 are bounded by a constant depending on $r$.
 Proposition~\ref{prop:L2} now provides constants $C_0= C_0(\eta,r)$ and $\gamma= \gamma(\eta,r)$  such that
 \beq \label{est:unique}
 \|u_1-u_2\|_{G^0_{\Sigma,\gamma}(J'\times G)}^2  \le C_0\,( \|f_1\|_{L^2_\gamma(J'\times G)}^2 + \|g_1\|_{L^2_\gamma(J'\times \Sigma)}^2).
 \eeq
 Exactly as in the proof of Lemma~3.2 in \cite{Sp2}, we can control
 \beq \label{est:unique-f}
  \|f_1\|_{L^2_\gamma(J'\times G)}^2 \le c(\eta,R) (T'-T_0)\,\|u_1-u_2\|_{G^0_\gamma(J'\times G)}^2\,.
\eeq
Recalling the definition of $\ol{\kappa}_j$ in the statement and of $z$ in \eqref{def:z}, we next derive
\beq \label{est:unique-g}
  \|g_1\|_{L^2_\gamma(J'\times \Sigma)} ^2\le z_0(\ol{\kappa}_1)^2 \ol{\kappa}_2^2 \, \|\tr_\tau(u_1-u_2)\|_{L^2_\gamma(J'\times G)}^2\,.
 \eeq
By the assumption on $\ol{\kappa}_j$, we can choose $T'>T_0$ so  small that \eqref{est:unique},  \eqref{est:unique-f}, and \eqref{est:unique-g}
imply that  $u_1 = u_2$ on $[T_0, T']$ and thus on $[t_0, T']$. This result contradicts the definition 
 of $T_0$, and hence $u_1 = u_2$ on $J$.
\end{proof}

We next construct local in time solutions of \eqref{eq:maxwell}  using Banach's fixed point theorem
and our linear result Theorem~\ref{thm:reg}.  Special care in the treatment of the constants is required
to close the argument, and we need the structure of the estimate \eqref{est:main} here. For the data we define the quantity 
\beq \label{def:d}
	d_k(J):=  \|u_0\|_{\cH^k(G)}^2 + \sum_{j = 0}^{k-1} \|\partial_t^j f(t_0)\|_{\cH^{k-1-j}(G)}^2 +\|f\|_{\cH^k(J\times G)}^2 
 + \|g\|_{\cH^k(J\times \Sigma)}^2.
 \eeq
 Moreover, $C_S$ is the norm of the Sobolev embedding $\cH^2(G)\hra C_b(\ol{G})$. We note that below the number $C_0$ 
 only depends on a radius $r_3\ge d_3(J)^{1/2}$ instead of $r$, as an inspection of the proof shows. 
  
 \begin{thm}\label{thm:local}
 Let $t_0 \in \RR$, $T > 0$,  $J = (t_0,t_0 + T)$, and $m \in \NN$ with $m \geq 3$.
 Assume that either \eqref{ass:main} or  \eqref{ass:main1} is valid. 
 Choose data $u_0 \in \cH^m(G)^6$,  $f \in \cH^m(J\times G)^6$,  and $g \in \cH^m(J\times \Sigma)^3$ with 
$g\cdot \nu =0$ such that the tuple $(t_0, \chi, \sigma,\zeta,u_0,f,g)$ 
 fulfills the compatibility conditions~\eqref{eq:cc-nl} of order $m$. Pick a radius $r > 0$
 satisfying
 \[ 
  d_m(J)\le r^2.
 \]
 Take a number $\kappa >0 $ with
 \begin{equation*}
  \dist(\{u_0(x) \,|\, x \in G\}, \partial \mathcal{U}) > \kappa.
 \end{equation*}
 If \eqref{ass:main} is valid, we take $\wt{\kappa} > 0$ with
 \beq \label{ass:z}  
 z(\wt{\kappa})\le \min\big\{ \tfrac18 (C_{m,0}\ol{C})^{-1/2},\,(2C_0)^{-1/2} \big\}.
\eeq 
and assume that $\|B_1 u_0\|_{L^\infty(\Sigma)} < \wt{\kappa}$.
The constants $C_0$,  $C_{m,0}$, 
and $\ol{C}$ depending on $\chi$, $\sigma$, $\zeta$, $m$, $r$, $\kappa$, and $T$
are given by Lemma~\ref{lem:unique}, \eqref{def:Cm0-local-existence}, \eqref{est:rho}, and  \eqref{est:rho1}.

 Then there is a time
 $\tau = \tau(\chi,\sigma,\zeta,m,T,r,\kappa,\wt{\kappa}) > 0$ such that the nonlinear initial boundary value 
 problem~\eqref{eq:maxwell} with data $f$, $g$, and $u_0$ 
 has a solution $u$ on $[t_0, t_0 + \tau]$ which belongs to $G^m_\Sigma((t_0, t_0 + \tau) \times G)$. 
 It is unique among those solutions with $\|B_1 u\|_{L^\infty((t_0, t_0 + \tau) \times \Sigma)} < \wt{\kappa}$.
 \end{thm}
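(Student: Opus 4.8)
The plan is to construct the solution by Banach's fixed point theorem, freezing the nonlinearities and invoking the linear regularity result Theorem~\ref{thm:reg}, in the spirit of \cite{Sp2}, but carefully tracking the constants so that the genuinely nonlinear boundary term can be absorbed. I would first fix, \emph{in this order}: a compact set $V\Subset\cU$ containing $\{u_0(x)\,|\,x\in G\}$ in its interior with $\dist(V,\partial\cU)>\kappa/2$; the number $r_0=r_0(\chi,\sigma,\zeta,V,\eta,m,r)$ bounding $\|\chi(u_0)\|_{F^{\tilde m-1,0}}$, $\|\sigma(u_0)\|_{F^{\tilde m-1,0}}$ and, via \eqref{est:S-nl} and the chain rule, $\|\partial_t^j[\chi(\hat u)](t_0)\|_{\cH^{\tilde m-1-j}}$, $\|\partial_t^j[\sigma(\hat u)](t_0)\|_{\cH^{\tilde m-1-j}}$ for any $\hat u$ carrying the prescribed Taylor data at $t_0$; the constants $C_{m,0}=C_{m,0}(\eta,r_0,\|\zeta\|_{L^\infty(\Sigma\times B(0,\wt\kappa))})$ from \eqref{est:mainG} and $C_0=C_0(\eta,\|\zeta\|_{L^\infty(\Sigma\times B(0,\wt\kappa))})$ from Proposition~\ref{prop:L2}; a radius $R$, a fixed multiple of $\sqrt{C_{m,0}}\,r$ large enough also to exceed the norm of a canonical $\tilde G^m_\Sigma$-extension of the Taylor data; and only then the constant $\ol C=\ol C(R,\zeta,T')$ of \eqref{est:rho}--\eqref{est:rho1}. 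With these constants the smallness hypothesis \eqref{ass:z} is imposed without circularity. For small $\tau\in(0,T]$, set $J_\tau=(t_0,t_0+\tau)$ and let $E$ consist of all $\hat u\in\tilde G^m_\Sigma(J_\tau\times G)$ with $\|\hat u\|_{\tilde G^m_\Sigma}\le R$, $\hat u(t)\in V$ for all $t$, $\partial_t^j\hat u(t_0)=S_{m,j,\chi,\sigma}(t_0,u_0,f)$ for $j\in\{0,\dots,m-1\}$, and, if \eqref{ass:main} holds, $\|B_1\hat u\|_{L^\infty(J_\tau\times\Sigma)}\le\wt\kappa$. For $\tau$ small this set is nonempty, and equipped with the metric of $G^0_\Sigma(J_\tau\times G)$ it is a complete metric space: a $G^0_\Sigma$-Cauchy sequence in $E$ has a weak$^*$-limit in $\tilde G^m_\Sigma$ which, by interpolation with $\cH^{m-1}\hra C_b(\ol G)$, is also a uniform limit, so the norm, range and $B_1$-constraints pass to the limit while the Taylor constraints hold trivially.

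For $\hat u\in E$ I would set $A_0=\chi(\hat u)$, $D=\sigma(\hat u)$, $b=\zeta(B_1\hat u)$ (which lie in the $F$-spaces required by Theorem~\ref{thm:reg} by Lemma~2.1 of \cite{Sp2}, using Remarks~\ref{rem:bdd} and~\ref{rem:bdd1} when $G$ is unbounded), and define $\Psi(\hat u)$ to be the solution of the linear problem \eqref{eq:maxwell-lin} with these coefficients and data $u_0,f,g$. Here $A_0,b\ge\eta I$ by \eqref{ass:main}, and \eqref{est:rho} with $\ol\kappa=\wt\kappa$ yields $[b]_{\cH^{\tilde m}(J_\tau\times\Sigma)}^2\le\ol C(\tau+z_0(\wt\kappa)^2)=:\rho^2$. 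The crucial point is that the linear compatibility conditions \eqref{eq:cc-lin} of the frozen problem coincide with the nonlinear ones \eqref{eq:cc-nl}, which hold by hypothesis: since $\partial_t^j\hat u(t_0)=S_{m,j}^{\nl}$, expanding $\partial_t^l[\chi(\hat u)](t_0)$, $\partial_t^l[\sigma(\hat u)](t_0)$, $\partial_t^k[\zeta(B_1\hat u)](t_0)$ by the chain rule (as in the computation leading to \eqref{eq:cc-reg}) gives $S_{m,p,A_j,D}(t_0,u_0,f)=S_{m,p}^{\nl}$ and $\partial_t^k b(t_0)=M_3^k$, and $B(t_0)=B_2-\nu\times\zeta(B_1u_0)B_1=B(u(t_0))$, so \eqref{eq:cc-lin} turns into \eqref{eq:cc-nl}. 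Thus Theorem~\ref{thm:reg} applies and $\Psi(\hat u)\in G^m_\Sigma(J_\tau\times G)$ with estimate \eqref{est:mainG}.

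It remains to verify $\Psi(E)\subseteq E$ and that $\Psi$ contracts in the $G^0_\Sigma$-metric. For $v=\Psi(\hat u)$, equation \eqref{eq:S-lin} together with $S_{m,p,A_j,D}(t_0,u_0,f)=S_{m,p}^{\nl}$ gives $\partial_t^p v(t_0)=S_{m,p}^{\nl}$ for $p\le m-1$; in particular $v(t_0)=u_0$, so $\|B_1 v\|_{L^\infty(J_\tau\times\Sigma)}\le\|B_1u_0\|_{L^\infty(\Sigma)}+\tau C_S R<\wt\kappa$ and $v(t)\in V$ for $\tau$ small. In \eqref{est:mainG} the prefactor $(C_{m,0}+\tau C_m)\e^{mC_1\tau}\le 2C_{m,0}$ for $\tau$ small, the data term is $\le d_m(J_\tau)\le r^2$, the extra summand is $\delta_{m>2}\rho^2\|B_1 v\|_{L^\infty_\gamma}^2\le\ol C\tau\wt\kappa^2+\ol C\,z(\wt\kappa)^2$, which is small by \eqref{ass:z}, and $\tfrac{C_m}{\gamma}\|f\|_{\cH^m_\gamma}^2$ is absorbed by taking $\gamma$ large; choosing $R^2$ a suitable multiple of $C_{m,0}r^2$ gives $\|\Psi(\hat u)\|_{G^m_\Sigma}\le R$. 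For the contraction, given $\hat u_1,\hat u_2\in E$ with $v_i=\Psi(\hat u_i)$, the difference $w=v_1-v_2$ solves a linear problem with zero initial value, interior source $(\chi(\hat u_2)-\chi(\hat u_1))\partial_tv_2+(\sigma(\hat u_2)-\sigma(\hat u_1))v_2$, and boundary datum $(\zeta(B_1\hat u_2)-\zeta(B_1\hat u_1))B_1 v_2\times\nu$; applying \eqref{est:apriori-0}, bounding the source in $L^2_\gamma(J_\tau\times G)$ by $c(\chi,\sigma,V,R)\sqrt\tau\,\|\hat u_1-\hat u_2\|_{G^0_\gamma}$ (Lipschitz continuity of $\chi,\sigma$ on $V$) and, using $|B_1v_2|\le\wt\kappa$ and \eqref{def:z}, the boundary datum in $L^2_\gamma(J_\tau\times\Sigma)$ by $z(\wt\kappa)\,\|\hat u_1-\hat u_2\|_{G^0_{\Sigma,\gamma}}$, we get $\|w\|_{G^0_{\Sigma,\gamma}}^2\le C_0\big(c(\chi,\sigma,V,R)^2\tau+z(\wt\kappa)^2\big)\|\hat u_1-\hat u_2\|_{G^0_{\Sigma,\gamma}}^2$. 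Since $C_0 z(\wt\kappa)^2\le\tfrac12$ by \eqref{ass:z}, $\Psi$ contracts for $\tau$ small. Banach's theorem then gives a unique fixed point $u\in E$, which by construction solves \eqref{eq:maxwell} and satisfies $u=\Psi(u)\in G^m_\Sigma(J_\tau\times G)$; uniqueness among solutions with $\|B_1\tilde u\|_{L^\infty((t_0,t_0+\tau)\times\Sigma)}<\wt\kappa$ follows from Lemma~\ref{lem:unique} with $\ol\kappa_1=\ol\kappa_2=\wt\kappa$, since \eqref{ass:z} gives $z_0(\wt\kappa)^2\wt\kappa^2\le(2C_0)^{-1}$.

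The main obstacle is precisely this constant bookkeeping. One must exploit that $C_{m,0}$ in \eqref{est:mainG} depends only on the low-order bound $r_0$ and on $\|b\|_\infty$, and not on the full coefficient norm (hence not on $R$), so that $C_{m,0}$, then $R$, and then $\ol C$ and $C_0$ can be fixed in a consistent order before \eqref{ass:z} is invoked. The genuinely nonlinear point is the term $\delta_{m>2}\rho^2\|B_1 v\|_{L^\infty}^2$ in the higher-order estimate: by \eqref{est:rho0}--\eqref{est:rho} the factor $\rho^2$ retains the contribution $\ol C\,z_0(\wt\kappa)^2$ and therefore does not become small merely by shrinking the time interval; it can only be controlled through the smallness of $z(\wt\kappa)$ together with $\|B_1 v\|_{L^\infty}\le\wt\kappa$. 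This is exactly where hypothesis \eqref{ass:z} is forced, as anticipated in the introduction, and it is the only essentially new difficulty compared with \cite{Sp2}.
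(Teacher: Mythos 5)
Your proposal follows essentially the same strategy as the paper: freeze the nonlinearities at $\hat u$, invoke the linear regularity Theorem~\ref{thm:reg}, check that the nonlinear compatibility conditions \eqref{eq:cc-nl} turn into the linear ones \eqref{eq:cc-lin} through the prescribed Taylor data, fix the constants in the order $r_0\to C_{m,0}\to R\to\ol C$ so that \eqref{ass:z} is non-circular, and close with Banach's fixed point theorem plus Lemma~\ref{lem:unique}. The one genuine deviation is the contraction metric: you contract in $G^0_\Sigma$ using only Proposition~\ref{prop:L2} and the order-zero Lipschitz bound $\|\tilde g\|_{L^2_\gamma}\le z(\wt\kappa)\|\hat u_1-\hat u_2\|_{G^0_{\Sigma,\gamma}}$, whereas the paper contracts in the $\tilde G^{m-1}_\Sigma$-metric via Theorem~\ref{thm:reg} and \eqref{est:rho1} (see \eqref{est:local-contr}). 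Your weaker-metric contraction suffices for this theorem (completeness of $E$ in the $G^0_\Sigma$-metric follows by the same Banach--Alaoglu/interpolation argument, and the constraints, including the added one $\|B_1\hat u\|_{L^\infty}\le\wt\kappa$, are closed under it); the paper's stronger metric is what later feeds the continuous-dependence machinery but is not needed here.

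One step is written in a circular order and needs the paper's absorption argument to be repaired. You first conclude $\|B_1\Psi(\hat u)\|_{L^\infty(J_\tau\times\Sigma)}\le\|B_1u_0\|_{L^\infty(\Sigma)}+\tau C_S R<\wt\kappa$, which presupposes $\|\Psi(\hat u)\|_{G^m_\Sigma}\le R$, and then use this $\wt\kappa$-bound inside \eqref{est:mainG} to prove $\|\Psi(\hat u)\|_{G^m_\Sigma}\le R$. Since the extra term in \eqref{est:mainG} involves $B_1\Psi(\hat u)$ (the solution, not the frozen $\hat u$), you cannot invoke the membership constraint of $E$ for it either. The correct (and standard) fix is exactly the paper's computation \eqref{est:local-inv2}: insert
\[
\|B_1 \Psi(\hat u)\|_{L^\infty_\gamma(J_\tau\times\Sigma)}^2\le 2\|B_1u_0\|_{L^\infty(\Sigma)}^2+2\tau^2C_S^2\,\|\Psi(\hat u)\|_{G^m_\Sigma(J_\tau\times G)}^2
\]
into \eqref{est:mainG}, and absorb the resulting small multiple of $\|\Psi(\hat u)\|_{G^m_\Sigma}^2$ into the left-hand side using the smallness of $\tau$ (this is what the choice \eqref{def:local-tau} encodes); only afterwards does the bound $\|B_1\Psi(\hat u)\|_{L^\infty}<\wt\kappa$ follow. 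With this reordering your argument goes through and coincides in substance with the paper's proof.
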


\begin{proof}
1) We focus on the assumption \eqref{ass:main} of a nonlinear boundary condition, since the linear one
in \eqref{ass:main1} can be treated as in Theorem~3.3 of \cite{Sp2}.
Without loss of generality we assume $t_0 = 0$ and, if $G$ is unbounded, that 
$\chi$ and $\sigma$ satisfy \eqref{ass:f-var}, cf.\ Remark~\ref{rem:bdd1}. Moreover, the quantities $d_k(J)$ can be chosen to 
be positive since $u=0$ is the unique solution of~\eqref{eq:maxwell} for $(u_0, f,g)=0$ by Lemma~\ref{lem:unique}. 

Let $\tau \in (0,T]$ and $R>0$. We introduce $J_\tau = (0, \tau)$ and
\[ V_{\kappa} = \{y \in \mathcal{U}\,|\,  \dist(y, \partial \mathcal{U}) \geq \kappa \} 
       \cap \overline{B}(0,C_S r).\]
Note that $\ran(u_0)$ is contained in the compact set $V_\kappa$. 
Our fixed point space is
 \begin{align*}
  E(R,\tau) &= \{v \in \tilde{G}^m_\Sigma(J_\tau\times G)\,|\, \|v\|_{G^m_\Sigma(J_\tau\times G)} \leq R, 
    \; \|v - u_0\|_{L^\infty(J_\tau \times \ol{G})} \leq \kappa/2, \\
  &\qquad\qquad\qquad\qquad\qquad\partial_t^j v(0) = S_{m,j,\chi,\sigma}(0,u_0,f) 
   \text{ for } 0\le j\le m-1\}
 \end{align*}
 endowed  with the metric induced by the norm of  $\tilde{G}^{m-1}_\Sigma(J_\tau\times G)$.  We have
 \beq \label{est:ERtau}
 \ran(v)\subseteq \tilde{V}_\kappa := V_\kappa + \ol{B}(0,\kappa/2)\sub \cU  
 \eeq  
for $v\in E(R,\tau)$. As in Lemma~2.6 in \cite{Sp2}, starting from Lemma~\ref{lem:nonempty}
below one can construct a function  $w\in\tilde{G}^m_\Sigma(J_\tau\times G)$ satisfying the initial conditions in $E(R,\tau)$. 
Using Lemma~\ref{lem:nonempty} and the estimates on $S_{m,j,\chi,\sigma}$ from Lemma~2.4 in \cite{Sp2}, one obtains a constant 
$C_{\ref{lem:nonempty}}=C_{\ref{lem:nonempty}}( \chi, \sigma, m, T,r, \kappa)$ such that 
$\|w\|_{\tilde{G}^m_\Sigma}\le  C_{\ref{lem:nonempty}} r$. Take $R > C_{\ref{lem:nonempty}} r$.  Since 
\[w(t)-u_0=\int_0^t \partial_s w(s)\dd s,\]
we can bound $\|w - u_0\|_{L^\infty(J_\tau \times \ol {G})}\le C_SR\tau$.
As a result, $E(R,\tau)$ is non-empty, if we choose $R > C_{\ref{lem:nonempty}} r$ and $\tau\in(0, \kappa /(2C_SR)]$.
It is straightforward to show the completeness of $E(R,\tau)$ 
for its metric by means of the Banach-Alaoglu theorem, cf.\ the proof of Theorem~3.3 in \cite{Sp2}.

\smallskip
 
2)  Let $\hat{u} \in E(R,\tau)$. Take $\eta > 0$ from \eqref{ass:main}. Then $A_0:=\chi(\hat{u})$ is contained in 
$F^{m}_{\eta} (J_\tau \times G)$, $b:=\zeta(B_1\hat{u})$ in $F^{m}_{\cH,\eta} (J_\tau \times \Sigma)$,
and $D:=\sigma(\hat{u})$ in $F^{m}(J_\tau \times G)$  by Lemma~2.1 of \cite{Sp2}, Remark~\ref{rem:bdd1},
 Sobolev's embedding, and the remarks before \eqref{est:rho}. The tuple 
 $(0,\chi(\hat{u}), A_1^{\co}, A_2^{\co}, A_3^{\co},\sigma(\hat{u}),\zeta(\hat{u}),u_0,f,g)$ 
satisfies the linear compatibility conditions~\eqref{eq:cc-lin} due to Lemma~2.6 of \cite{Sp2},
the initial conditions in $E(R,\tau)$, and formula \eqref{eq:zeta(v)}.
Theorem~\ref{thm:reg} yields a solution $u \in G^m_\Sigma(J_\tau\times G)$ of the system~\eqref{eq:maxwell-lin}
with the coefficients $A_0$, $D$, $b$ and the data $u_0$, $f$, $g$. In this way one defines
a mapping $\Phi \colon \hat{u} \mapsto u$ from $E(R,\tau)$ to  $G^m_\Sigma(J_\tau\times G)$. We want to prove 
that $\Phi$ is a strict contraction on $E(R,\tau)$ for a suitable radius $R$ and a sufficiently small time step $\tau$.
 
To this aim, take numbers $\tau \in (0,T]$ with $\tau\le \kappa/(2C_SR)$ and $R > C_{\ref{lem:nonempty}} r$ 
which will be fixed below. Let $\hat{u} \in E(R,\tau)$. Because of \eqref{est:ERtau},
 the map  $\zeta(\hat{u})$ is bounded by a constant $c(\tilde{V}_\kappa)$.
As in step~II) of the proof of Theorem~3.3 in \cite{Sp2}, one finds radii
$r_0 = r_0(\chi,\sigma,\zeta,m,r,\kappa)$ and $R_1 = R_1(\chi, \sigma,\zeta, m, R, \kappa,T)$  such that
\begin{align}\label{est:local-coeff-init}
 &\max\{\|\chi(\hat{u})(0)\|_{F^{m-1,0}(G)}, \ \max_{1 \leq l \leq m-1} \|\partial_t^l \chi(\hat{u})(0)\|_{\cH^{m-l-1}(G)}\} \leq r_0, \nonumber \\
 &\max\{\|\sigma(\hat{u})(0)\|_{F^{m-1,0}(G)},\ \max_{1 \leq l \leq m-1} \|\partial_t^l \sigma(\hat{u})(0)\|_{\cH^{m-l-1}(G)}\} \leq r_0,\\
 \label{est:local-coeff1}
 &\|\chi(\hat{u})\|_{F^m(J\times G)},\  \|\sigma(\hat{u})\|_{F^m(J\times G)},\  \|\zeta(\hat{u})\|_{F^{m-1}(J\times G)}  \leq R_1.
\end{align}
Moreover, the relations \eqref{est:rho}  and \eqref{est:ERtau} imply the bound
\beq\label{est:local-coeff2}
\sum_{|\alpha|=m}\|\partial^\alpha \zeta(B_1 \hat{u})\|_{L^2(J\times \Sigma)}^2  \le \ol{C}(\zeta,R,T,\kappa)(\tau + z_0(\wt{\kappa})^2).
\eeq

Let the constant 
\begin{equation}
\label{def:Cm0-local-existence}
  C_{m,0} = C_{m,0}(\chi,\sigma,\zeta,r,\kappa) = C_{m,0}(\eta(\chi,\zeta), r_0(\chi,\sigma,\zeta,m,r,\kappa),c(\tilde{V}_\kappa))
\end{equation}
be given by Theorem~\ref{thm:reg}. The radius $R=R(\chi,\sigma,\zeta, m,r,\kappa,T)$  for $E(R,\tau)$ is now defined as
\begin{align} \label{def:local-R}
 R= \max\Big\{\sqrt{32 C_{m,0}}\, r, \, C_{\ref{lem:nonempty}} r + 1\Big\}.
\end{align}
Let $\gamma_m = \gamma_m(\chi, \sigma,\zeta, T,r,\kappa)$ and $C_m = C_m(\chi,\sigma,\zeta,T,r,\kappa)$ be the
constants from Theorem~\ref{thm:reg} with $\eta(\chi,\zeta)$ and $R_1(\chi, \sigma, \zeta, m, R, \kappa,T)$.
Lemma~2.1 in \cite{Sp1} yields product rules and Corollary~2.2 in \cite{Sp2} Lipschitz bounds of composition 
operators. We write $C_{2.1,\text{\cite{Sp1}}}$ for the maximum of the constants in Lemma~2.1 in \cite{Sp1} and
$C_{2.2, \text{\cite{Sp2}}}$ for that of Corollary~2.2 in \cite{Sp2}  applied to our material laws and with
the numbers $m$ and $R$ and the set  $\tilde{V}_\kappa$. 
We finally introduce the parameter $\gamma = \gamma(\chi, \sigma, \zeta,m, T, r,\kappa)$ and the time step 
$\tau = \tau(\chi, \sigma, \zeta,m, T, r,\kappa,\wt{\kappa})$  by
\begin{align}
 \gamma &= \max\Big\{\gamma_m, \,C_{m,0}^{-1} C_m \Big\} ,  \label{def:local-gamma} \\
  \tau &= \min \Big\{T, \frac{\kappa}{2C_SR}\,, \frac{\ln 2}{2 \gamma + m C_1}\,, \frac{C_{m,0}}{C_m}, 
    [16 C_{m,0}\ol{C}(4 \wt{\kappa}^2 + C_S^2 T (T + z_0(\wt{\kappa})^2))]^{-1}\,, \notag\\
       & \hspace*{2cm} (16 C_{m,0} \ol{C})^{-1}, [32  R^2 C_{m,0} C_{2.1,\text{\cite{Sp1}}}^2 C_{2.2, \text{\cite{Sp2}}}^2 ]^{-1}\Big\}.  \label{def:local-tau}
       \end{align}
3) With the definitions and  notations of step~2), Theorem~\ref{thm:reg}, \eqref{est:local-coeff2}, and 
Sobolev's embedding
yield
\begin{align} \label{est:local-inv2}
&\|\Phi(\hat u)\|^2_{G^{m}_\Sigma(J_\tau \times G)} \le  \e^{2\gamma\tau}\,\|\Phi(\hat u)\|^2_{G^{m}_{\Sigma,\gamma}(J_\tau \times G)}\notag\\
  &\leq  (C_{m,0}+\tau C_m) \e^{(mC_1+2\gamma)\tau} \Big(\sum_{j = 0}^{m-1}\|\partial_t^j f(0)\|^2_{\cH^{m-1-j}(G)} 
  + \|u_0\|^2_{\cH^m(G)}\notag \\
  &\quad + \|g\|^2_{\cH^m_\gamma(J_\tau \times \Sigma)} + \ol{C}(\tau + z_0(\wt{\kappa}^2)) \|B_1 \Phi(\hat{u})\|_{L^\infty_\gamma(J_\tau \times \Sigma)}^2 \Big) +\frac{C_m}{\gamma} \e^{2\gamma\tau} \,\|f\|_{\cH^m_{\gamma}(J_\tau \times G)}^2\notag \\
    &\le 2C_{m,0} \cdot 2 \Big(r^2 + \ol{C}(\tau + z_0(\wt{\kappa})^2)(2\|B_1 u_0\|_{L^\infty(\Sigma)}^2 + 2 \tau^2 \|\partial_t (B_1 \Phi(\hat{u}))\|_{L^\infty(J_\tau \times \Sigma)}^2 )\Big) \nonumber \\
    &\le 8 C_{m,0}\Big(r^2 + \ol{C}(\tau + z_0(\wt{\kappa})^2)(\wt{\kappa}^2 + \tau^2 C_S^2 \|\Phi(\hat{u})\|^2_{G^{m}_\Sigma(J_\tau \times G)})\Big).
\end{align}
Employing~\eqref{ass:z} and $R \geq 1$, we thus obtain
\begin{align*}
 \|\Phi(\hat u)\|^2_{G^{m}_\Sigma(J_\tau \times G)} 
 &\le 16 C_{m,0}\Big(r^2 + \ol{C}(\tau + z_0(\wt{\kappa})^2)\wt{\kappa}^2\Big) \nonumber \\
 &\le \frac{R^2}{2} + 16 C_{m,0} \ol{C} \, \wt{\kappa}^2 \tau + 16 C_{m,0} \ol{C} z(\wt{\kappa})^2 
 \le R^2.
\end{align*}
Step~III) of the proof of Theorem~3.3 in \cite{Sp2} shows that the map $\Phi(\hat u)$ satisfies the 
initial and sup-norm conditions in $E(R,\tau)$. So we have shown that $\Phi$ maps  $E(R,\tau)$ into itself.

Take $\hat{u},\hat{v}\in  E(R,\tau)$. Set $u=\Phi(\hat{u})$ and $v=\Phi(\hat{v})$. As above, we look at the linear
system~\eqref{eq:maxwell-lin} with coefficients $A_0=\chi(\hat{u})$, $D=\sigma(\hat{u})$, and $b=\zeta(B_1\hat{u})$. The difference $v-u$
solves this system with inhomogeneities 
\begin{align*}
\tilde{f} &= (\chi(\hat{u}) - \chi(\hat{v})) \partial_t v + (\sigma(\hat{u}) - \sigma(\hat{v})) v,\qquad
\tilde{g} = B_0  (\zeta(B_1\hat{u}) - \zeta(B_1\hat{v})) B_1 v,
\end{align*}
cf.\ in  step~IV) of the proof of Theorem~3.3 in \cite{Sp2}. Proceeding as in this step and in \eqref{est:local-inv2},  from Theorem~\ref{thm:reg} we deduce  the estimate  
\begin{align}\label{est:local-contr}
\|\Phi(\hat v) &-\Phi(\hat u)\|^2_{G^{m-1}_\Sigma(J_\tau\times G)} 
\le  \e^{2\gamma\tau}\,\|\Phi(\hat v)- \Phi(\hat u)\|^2_{G^{m-1}_{\Sigma,\gamma}(J_\tau\times G)}\notag \\
 & \le \frac14\, \|\hat v -\hat u\|^2_{G^{m-1}_\Sigma(J_\tau \times G)} 
       + 4C_{m,0}\ol{C}(\tau + z(\wt{\kappa})^2)\|\hat{u}-\hat{v}\|_{G^{m-1}_\Sigma(J_\tau \times G)}^2\notag\\
 &\le \frac34\, \|\hat v -\hat u\|^2_{G^{m-1}_\Sigma(J_\tau \times G)}\,,
\end{align}
employing also \eqref{est:rho1}. (The last part of \eqref{def:local-tau} enters when  using the arguments of \cite{Sp2}.)
Together with Lemma~\ref{lem:unique}, the result is proven.
\end{proof}

We add a lemma used in the proof of  Theorem~\ref{thm:local}.

\begin{lem}\label{lem:nonempty}
  Let $m \in \NN$ and $k \in \{0, \ldots, m-1\}$. Take maps $h_k$ in $\cH^{m-k}(\RR^3_+)$.
  Then there is a function $u \in G^m(\RR \times \RR^3_+)$ such that $\partial_t^k u(0) = h_k$ for all $k$,
the trace of $u$ on $\partial \RR^3_+$ belongs to $\cH^m(\RR \times \partial \RR^3_+)$, and 
  \begin{align}
  \label{EquationEstimatesForu}
   \|u&\|_{G^m(\RR \times \RR^3_+)} + \|\partial_t^j u\|_{L^2(\RR, \cH^{m+1/2 - j}(\RR^3_+))}
   +\|\operatorname{tr}_{\partial \RR^3_+} u\|_{\cH^m(\RR \times \partial \RR^3_+)}\notag\\
   &\leq c \sum_{k = 0}^{m-1} \|h_k\|_{\cH^{m-k}(\RR^3_+)}, 
  \end{align}
  for all $j \in \{0, \ldots, m\}$ and a constant $c=c(m)$.  
 \end{lem}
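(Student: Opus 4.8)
The plan is to reduce everything to the full space $\RR\times\RR^3$ by extending the data, to build the lifting there via a partial Fourier transform in the spatial variable, and to restrict at the end; all the stated norm estimates will then follow from one elementary pointwise bound on the Fourier side.

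First I would fix a universal extension operator $\mathfrak{E}\colon\cH^s(\E)\to\cH^s(\RR^3)$, bounded for every $s\ge0$ with operator norm depending only on $s$, and put $\wt h_k=\mathfrak{E}h_k\in\cH^{m-k}(\RR^3)$, so that $\sum_{k}\|\wt h_k\|_{\cH^{m-k}(\RR^3)}\le c(m)\sum_{k}\|h_k\|_{\cH^{m-k}(\E)}$. Then I choose $\chi\in C_c^\infty(\RR)$ with $\supp\chi\sub[-1,1]$ and $\chi\equiv1$ on $[-\tfrac12,\tfrac12]$, write $\langle\xi\rangle=(1+|\xi|^2)^{1/2}$ for $\xi\in\RR^3$, and define $\wt u$ through its Fourier transform in $x\in\RR^3$ by
\[
  \wh{\wt u}(t,\xi)=\sum_{k=0}^{m-1}\frac{t^k}{k!}\,\chi(t\langle\xi\rangle)\,\wh{\wt h}_k(\xi),\qquad t\in\RR,\ \xi\in\RR^3.
\]
Since $\chi^{(b)}(0)=0$ for $b\ge1$, differentiating $k$ times in $t$ and evaluating at $t=0$ leaves only the summand in which all derivatives hit $t^k/k!$, so $\partial_t^k\wt u(0,\cdot)=\wt h_k$ for $0\le k\le m-1$; setting $u:=\wt u|_{\RR\times\E}$ gives $\partial_t^k u(0)=h_k$. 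Note also that $\wt u$ is supported in $\{|t|\le1\}$, since $\langle\xi\rangle\ge1$ on $\supp\chi(t\langle\xi\rangle)$.

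The crux is the pointwise estimate for $\partial_t^j\wh{\wt u}$. By the Leibniz rule each summand of $\partial_t^j\wh{\wt u}(t,\xi)$ is a finite combination of terms $c\,t^{a}\langle\xi\rangle^{b}\chi^{(b)}(t\langle\xi\rangle)\wh{\wt h}_k(\xi)$ with $a,b\ge0$ and $a-b=k-j$, i.e.\ $t^{a}\langle\xi\rangle^{b}=(t\langle\xi\rangle)^{a}\langle\xi\rangle^{j-k}$; hence on $\supp\chi^{(b)}(t\langle\xi\rangle)\sub\{|t|\langle\xi\rangle\le1\}$ one has $|\partial_t^j\wh{\wt u}(t,\xi)|\le C(j,m)\sum_{k=0}^{m-1}\langle\xi\rangle^{j-k}|\wh{\wt h}_k(\xi)|$, supported in $|t|\le\langle\xi\rangle^{-1}$. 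Integrating in $t$ over this set of measure $\le2\langle\xi\rangle^{-1}$, then in $\xi$ against $\langle\xi\rangle^{2s}$, and using $\wh{\wt h}_k\in\langle\xi\rangle^{-(m-k)}L^2(\RR^3)$, I obtain for every $j\in\NN_0$ and every $s\le m-j+\tfrac12$
\[
  \|\partial_t^j\wt u\|_{L^2(\RR,\cH^{s}(\RR^3))}^2\le C(j,m)\sum_{k=0}^{m-1}\|\wt h_k\|_{\cH^{m-k}(\RR^3)}^2.
\]
In particular $\partial_t^j\wt u\in L^2(\RR,\cH^{m-j+1/2}(\RR^3))$ for all $j\le m$, which after restriction to $\E$ is the half-derivative-gain term in \eqref{EquationEstimatesForu}.

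It remains to read off the other two assertions. For the $G^m$-regularity, for $j\le m-1$ both $\partial_t^j\wt u\in L^2(\RR,\cH^{m-j+1/2})$ and $\partial_t^{j+1}\wt u\in L^2(\RR,\cH^{m-j-1/2})$ hold, so the standard trace embedding $\cH^1(\RR;X_0)\cap L^2(\RR;X_1)\hra C(\RR;(X_0,X_1)_{1/2,2})$ with $(X_0,X_1)=(\cH^{m-j-1/2},\cH^{m-j+1/2})$ and $(X_0,X_1)_{1/2,2}=\cH^{m-j}$ gives $\partial_t^j\wt u\in C(\RR,\cH^{m-j}(\RR^3))$; the case $j=m$ is identical using $\partial_t^m\wt u\in L^2(\RR,\cH^{1/2})$ and $\partial_t^{m+1}\wt u\in L^2(\RR,\cH^{-1/2})$. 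Hence $\wt u\in G^m(\RR\times\RR^3)$ and $u\in G^m(\RR\times\E)$, with the $G^m$-norm controlled by the $L^2$-in-time bounds above. For the boundary trace, the spatial trace theorem $\cH^{m-j+1/2}(\RR^3)\to\cH^{m-j}(\RR^2)$ commutes with $\partial_t$, so applying it to $\partial_t^j\wt u\in L^2(\RR,\cH^{m-j+1/2}(\RR^3))$ yields $\partial_t^j(\tr_{\partial\E}\wt u)\in L^2(\RR,\cH^{m-j}(\RR^2))$ for $j\in\{0,\dots,m\}$, whence $\tr_{\partial\E}u=\tr_{\partial\E}\wt u\in\bigcap_{j=0}^m\cH^j(\RR,\cH^{m-j}(\RR^2))=\cH^m(\RR\times\partial\E)$. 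Every step is linear and has been tracked back to $\sum_k\|h_k\|_{\cH^{m-k}(\E)}$, so \eqref{EquationEstimatesForu} follows with $c=c(m)$. The only point requiring genuine care is the bookkeeping in the pointwise bound: it is precisely this computation that simultaneously produces the gain $\cH^{m+1/2-j}$, the loss-free time regularity, and the boundary trace — everything else is routine.
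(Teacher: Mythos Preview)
Your construction is the same as the paper's: you build $\wt u$ via $\wh{\wt u}(t,\xi)=\sum_k\frac{t^k}{k!}\chi(t\langle\xi\rangle)\wh{\wt h}_k(\xi)$ and read off the estimates on the Fourier side. The pointwise bound and the resulting inequality $\partial_t^j\wt u\in L^2(\RR,\cH^{m-j+1/2}(\RR^3))$ are correct, and deducing $\wt u\in G^m$ from these via the time-trace embedding is a perfectly acceptable alternative to the paper's direct $L^\infty(\RR,\cH^{m-j})$ computation.

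There is, however, a genuine gap in the boundary-trace step. You invoke the spatial trace $\cH^{m-j+1/2}(\RR^3)\to\cH^{m-j}(\RR^2)$ for all $j\in\{0,\dots,m\}$, but at $j=m$ this is the endpoint map $\cH^{1/2}(\RR^3)\to L^2(\RR^2)$, which is \emph{not} bounded. So from $\partial_t^m\wt u\in L^2(\RR,\cH^{1/2}(\RR^3))$ alone you cannot conclude $\partial_t^m(\tr_{\partial\E}\wt u)\in L^2(\RR\times\RR^2)$, and without this you do not get $\tr_{\partial\E}u\in\cH^m(\RR\times\partial\E)$.

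The fix is already implicit in your own bookkeeping. In your Leibniz expansion of $\partial_t^m\big(\tfrac{t^k}{k!}\chi(t\langle\xi\rangle)\big)$ every term carries $\chi^{(b)}(t\langle\xi\rangle)$ with $b=m-l\ge m-k\ge1$ (since $k\le m-1$), and $\chi^{(b)}\equiv0$ on $[-\tfrac12,\tfrac12]$ because $\chi$ is constant there. Hence $\partial_t^m\wh{\wt u}(t,\xi)$ is supported in $\tfrac12\le|t|\langle\xi\rangle\le1$, not merely in $|t|\langle\xi\rangle\le1$. This extra vanishing near $t=0$ is exactly what makes $s\mapsto|\partial_t^m(s^k\chi(s))|^2/|s|$ integrable, and the paper exploits it to estimate $\|\partial_t^m\wt u(\cdot,\cdot,0)\|_{L^2(\RR\times\RR^2)}$ directly on the Fourier side (writing the trace as $\int_\RR\partial_t^m\wh{\wt u}(t,\xi',\xi_3)\,d\xi_3$, using Cauchy--Schwarz over the $\xi_3$-interval of length $\le 4/|t|$, and then integrating in $t$). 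You should replace the appeal to the $\cH^{1/2}\to L^2$ trace by this explicit computation; the rest of your argument is fine.
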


 \begin{proof}
  Let $k \in \{0, \ldots, m-1\}$. Take $g_k \in \mathcal{S}(\RR^3)$. Fix a map $\psi \in C_c^\infty(\RR)$ which
 equals $1$ in $(-\frac12,\frac12)$ and vanishes on $\RR\setminus (-2,2)$. We define the function $v$ by
 \begin{equation*} 
  v(t,x) = \mathcal{F}^{-1}\Big(\psi((1+ |\cdot|^2)^{1/2} t) \sum_{k = 0}^{m-1} \hat{g}_k \frac{t^k}{k!} \Big)(x), \qquad (t,x) \in \RR^4,
 \end{equation*}
  where $\mathcal{F}$ and the hat denote the spatial Fourier transform. 
 Observe that we apply $\cF^{-1}$ to a function in $\mathcal{S}(\RR^4)$.
 The dominated convergence theorem  yields
 \begin{equation*} 
  \partial_t^k v(0) = g_k
 \end{equation*}
 for all $k$. To show~\eqref{EquationEstimatesForu} for $v$ and $g_k$, 
  we take $j \in \{0, \ldots, m\}$ and compute
 \begin{align*} 
  &\|\partial_t^j v\|_{L^\infty(\RR, \cH^{m-j}(\RR^3))}^2 = \sup_{t \in \RR} \int_{\RR^3} (1 + |\xi|^2)^{m-j} \,
             |\mathcal{F}(\partial_t^j v)(t,\xi)|^2 \dd\xi \nonumber\\
  &\leq C \sup_{t \in \RR} \int_{\RR^3} (1 + |\xi|^2)^{m-j} \sum_{k = 0}^{m-1} 
       \Big|\partial_t^j \Big(\psi((1 + |\xi|^2)^{1/2} t)\hat{g}_k(\xi) \frac{t^k}{k!}\Big)\Big|^2 \dd\xi \nonumber\\
  &\leq C \sum_{k = 0}^{m-1} \sup_{t \in \RR} \int_{\RR^3} (1+ |\xi|^2)^{m-j-k} |\hat{g}_k(\xi)|^2 \,
  \big|\partial_t^j[\psi((1 + |\xi|^2)^{\frac12} t) ((1 + |\xi|^2)^{\frac12} t)^k)]\big|^2 \dd\xi \nonumber\\
  &\leq C \sum_{k = 0}^{m-1}  \int_{\RR^3} (1+ |\xi|^2)^{m-k} |\hat{g}_k(\xi)|^2 \sup_{s \in \RR}|\partial_t^j(\psi( s) s^k)|^2 \dd\xi \nonumber\\
  &= C \sum_{k = 0}^{m-1}  \int_{\RR^3} (1+ |\xi|^2)^{m-k} |\hat{g}_k(\xi)|^2  \dd\xi = C \sum_{k = 0}^{m-1} \|g_k\|_{\cH^{m-k}(\RR^3)}^2.
 \end{align*}
 So the first estimate in~\eqref{EquationEstimatesForu} has been shown.
 
 For the second one we proceed similarly, now abbreviating $\psi_{l,k}(s) := \partial_t^l (\psi(s) s^k)$
 for $s \in \RR$ and each  $l \in \{0, \ldots, m\}$. We then derive
 \begin{align*} 
  &\|\partial_t^j v\|_{L^2(\RR, \cH^{m +1/2 -j}(\RR^3))}^2 = \int_{\RR} \int_{\RR^3} (1 + |\xi|^2)^{m+1/2-j}\,
    |\mathcal{F}(\partial_t^j v)(t,\xi)|^2 \dd\xi\dd t \\
  &\leq C \sum_{k = 0}^{m-1}  \int_{\RR^3} (1+ |\xi|^2)^{m-k} \,|\hat{g}_k(\xi)|^2 
  \int_{\RR}|\psi_{j,k}((1+|\xi|^2)^{1/2} t)|^2\, (1 + |\xi|^2)^{1/2} \dd t \dd\xi \nonumber\\
  &= C \sum_{k = 0}^{m-1} \int_{\RR} |\psi_{j,k}(s)|^2 \dd s \int_{\RR^3} (1+ |\xi|^2)^{m-k} \,|\hat{g}_k(\xi)|^2 \dd\xi 
  \leq C \sum_{k = 0}^{m-1} \|g_k\|_{\cH^{m-k}(\RR^3)}^2.\nonumber
 \end{align*}
 
 Denoting $(x_1, x_2)$ by $x'$ and $(\xi_1, \xi_2)$ by $\xi'$, we finally compute
 \begin{align*}
  &\partial_t^m v(t,x',0) = \partial_t^m \mathcal{F}^{-1}\Big(\psi((1+ |\cdot|^2)^{1/2} t) \sum_{k = 0}^{m-1} \hat{g}_k \frac{t^k}{k!} \Big)(x',0) \\
  &= \frac{1}{2\pi}\int_{\RR^2} e^{\ii x'\cdot \xi'} \frac{1}{\sqrt{2\pi}}\int_{\RR} \partial_t^m \Big(\psi((1+ |\xi|^2)^{1/2} t) 
   \sum_{k = 0}^{m-1} \hat{g}_k(\xi) \frac{t^k}{k!} \Big) \dd\xi_3 \dd\xi'.
 \end{align*}
 The spatial Fourier transform on $\RR^2$ of $\partial_t^m v(t,x',0)$ is thus given by
 \begin{align*}
  \mathcal{F}( \partial_t^m v(t,\cdot,0))(\xi') = \frac{1}{\sqrt{2\pi}}\int_{\RR} \partial_t^m \Big(\psi((1+ |\xi|^2)^{1/2} t)
    \sum_{k = 0}^{m-1} \hat{g}_k(\xi) \frac{t^k}{k!} \Big) \dd\xi_3.
 \end{align*}
 We next fix a time $t \in \RR \setminus \{0\}$. Since $\psi$ vanishes on the complement of $(-2,2)$, the 
 integrand above vanishes if $|\xi_3| > 2 / |t|$. This fact yields the estimate
 \begin{align*}
  &|\mathcal{F}( \partial_t^m v(t,\cdot,0))(\xi')|^2 \leq C \sum_{k=0}^{m-1} \Big[\int_{\RR} (1+ |\xi|^2)^{\frac{m-k}{2}}\, |\hat{g}_k(\xi)|\,
    \psi_{m,k}((1+ |\xi|^2)^{1/2} t) \dd \xi_3 \Big]^2 \\
  &\leq C \sum_{k = 0}^{m-1} \int_{-2/|t|}^{2/|t|} 1 \,\dd\xi_3 \int_{\RR} (1+ |\xi|^2)^{m-k}\, |\hat{g}_k(\xi)|^2\, |\psi_{m,k}((1+ |\xi|^2)^{1/2} t)|^2 \dd \xi_3 \\
  &= C \sum_{k = 0}^{m-1} \int_{\RR} (1+ |\xi|^2)^{m-k} \,|\hat{g}_k(\xi)|^2\, 
    \frac{|\psi_{m,k}((1+ |\xi|^2)^{1/2} t)|^2}{|(1+ |\xi|^2)^{1/2} t|} (1+ |\xi|^2)^{1/2}  \dd \xi_3.
 \end{align*}
 Since $k < m$, at least one derivative falls onto $\psi$ in  $\psi_{m,k}(t) = \partial_t^m (\psi(t) t^k)$. As $\psi$ is constant on $(-1/2,1/2)$, 
 the function $\psi_{m,k}$ vanishes on this interval, and hence the map $s \mapsto \frac{|\psi_{m,k}(s)|^2}{|s|}$ 
 belongs to $C_c^\infty(\RR)$.  We  infer
 \begin{align*}
  &\|\partial_t^m v(\cdot, 0)\|_{L^2(\RR \times \RR^2)}^2 = \int_{\RR} \int_{\RR^2} |\mathcal{F}( \partial_t^m v(t,\cdot,0))(\xi')|^2 \dd\xi' \dd t \nonumber\\
  &\le C \sum_{k = 0}^{m-1} \int_{\RR} \frac{|\psi_{m,k}(s)|^2}{|s|} \dd s \int_{\RR^3} (1+ |\xi|^2)^{m-k} |\hat{g}_k(\xi)|^2 \dd \xi 
      \leq C \sum_{k = 0}^{m-1} \|g_k\|_{\cH^{m-k}(\RR^3)}^2.
 \end{align*}
 Also employing the trace theorem, we obtain~\eqref{EquationEstimatesForu} 
 for the functions $v$ and $g_k$. The assertion now follows 
by approximation.
 \end{proof}

 We assume that the conditions of Theorem~\ref{thm:local} concerning the data are valid and that 
 the inhomogeneities
$f$ and $g$ belong to the spaces $\mathcal{H}^m((t_0,T) \times G)$ respectively 
$\mathcal{H}^m((t_0,T) \times \Sigma)$, for all $T > 0$. 
For the assumption~\eqref{ass:z} we take the quantity $d_m((t_0,t_0+1))$ unless something else is specified.
We then define the \emph{maximal existence time} by 
\begin{align}\label{def:max-time}
 T_+ &=T_+(m, t_0, u_0,f,g) \nonumber \\
 &= \sup \{\tau \geq t_0 \,|\, \exists \,\text{unique } \mathcal{G}^m_\Sigma  \text{-solution } u \text{ of } \eqref{eq:maxwell} 
      \text{ on } [t_0,  \tau] \}.
\end{align}
 The interval $(t_0, T_+) =: J_{\mathrm{max}}$ is called 
 the \emph{maximal interval of existence}. These notions are modified in a straightforward way if the inhomogeneities are given only on 
 a bounded interval $(t_0,T)$.  By standard methods we can extend the solution 
from Theorem~\ref{thm:local} to a  \emph{maximal solution} $u$  of~\eqref{eq:maxwell} 
 on $J_{\mathrm{max}}$ which belongs to $G^m_\Sigma((t_0,T)\times G)$ for all $T< T_+$ and cannot 
 be extended beyond this interval by a positive time span. 
 More precisely, we obtain the following basic blow-up criterion, cf.\ Lemma~4.1 of \cite{Sp2}.

 \begin{prop}\label{prop:max-sol}
Let $t_0 \in \RR$ and $m \in \NN$ with $m \geq 3$.
 Assume that either \eqref{ass:main} or  \eqref{ass:main1} is valid. 
 Choose data $(u_0, f,g)$ such that $u_0 \in \cH^m(G)^6$,  $f \in \cH^m((t_0,T)\times G)^6$,  $g \in \cH^m((t_0,T)\times \Sigma)^3$ 
 for all $T>t_0$, $g\cdot \nu =0$, and   the tuple $(t_0, \chi, \sigma,\zeta,u_0,f,g)$ 
 fulfills the compatibility conditions~\eqref{eq:cc-nl} of order $m$. 
 If assumption \eqref{ass:main} is valid, we require condition  \eqref{ass:z}.
Let $u$ be the maximal solution of~\eqref{eq:maxwell} on $J_{\mathrm{max}}$   introduced above. If 
   $T_+  < \infty$, then  one of the following blow-up properties
   \begin{enumerate}
    \item 
    $\liminf_{t \nearrow T_+} \dist(\{u(t,x) \,|\, x \in G\}, \partial \mathcal{U}) = 0$, 
    \item 
    $\lim_{t \nearrow T_+} \|u(t)\|_{\cH^m(G)} = \infty$,
	\item $\limsup_{t \nearrow T_+} \|B_1 u(t)\|_{L^\infty(\Sigma)} \geq \wt{\kappa}$ for any $\wt{\kappa}$ satisfying~\eqref{ass:z},
   \end{enumerate}
 occurs, where the last item is removed  if  \eqref{ass:main1} is satisfied.
In (c), we assume that (a) and (b) do not occur and define the constants in  \eqref{ass:z} 
for the quantities $\kappa:= \dist(\{u(t,x) \,|\, x \in G, t\in (t_0, T_+)\}, \partial \cU) >0$ and  $r^2= d_m(T_+-\delta,T_+ +\delta)$ for some $\delta  \in (0,T_+-t_0)$ 
and with $\|u_0\|_{\cH^m(G)}$ replaced by $\liminf_{t \nearrow T_+} \|u(t)\|_{\cH^m(G)}$.
  \end{prop}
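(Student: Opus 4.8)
The plan is to mimic the argument of Lemma~4.1 in \cite{Sp2}. First I would construct the maximal solution by gluing the local ones from Theorem~\ref{thm:local}. Let $\mathcal{T}$ be the set of times $\tau>t_0$ for which \eqref{eq:maxwell} has a unique $G^m_\Sigma$--solution on $[t_0,\tau]$ with the data $(u_0,f,g)$. Theorem~\ref{thm:local} shows $\mathcal{T}\neq\emptyset$, and the uniqueness Lemma~\ref{lem:unique} (whose smallness hypothesis is supplied by \eqref{ass:z} in the case \eqref{ass:main}) forces two such solutions for $\tau_1<\tau_2$ to agree on $[t_0,\tau_1]$. Hence they patch together to a maximal solution $u$ on $J_{\mathrm{max}}=(t_0,T_+)$ which lies in $G^m_\Sigma((t_0,T)\times G)$ for every $T<T_+$; in particular $u(t)\in\cH^m(G)$ and $\partial_t^j u(t)=S_{m,j,\chi,\sigma}(t,u(t),f)$ in $\cH^{m-j}(G)$ for all $t\in J_{\mathrm{max}}$ by \eqref{eq:S-nl}, and $u$ cannot be continued by the definition of $T_+$ in \eqref{def:max-time}.

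Next, assuming $T_+<\infty$, I would argue by contradiction that at least one of (a)--(c) must occur, supposing none does. Because (a) fails and each $u(t)$ has range in a closed subset of the open set $\cU$, the number $\kappa:=\dist(\{u(t,x)\mid x\in G,\ t\in(t_0,T_+)\},\partial\cU)$ is positive. Because (b) fails, $\liminf_{t\nearrow T_+}\|u(t)\|_{\cH^m(G)}<\infty$, and since $f$ and $g$ are available on bounded intervals I may fix $\delta\in(0,T_+-t_0)$ and a radius $r$ with $r^2\ge d_m((T_+-\delta,T_++\delta))$ computed with this $\liminf$ in place of $\|u_0\|_{\cH^m(G)}$ in \eqref{def:d}; these $\kappa$ and $r$ then determine a $\wt{\kappa}$ obeying \eqref{ass:z}, so that $C_0$, $C_{m,0}$, $\ol{C}$ are well defined. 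Since (c) fails for this $\wt{\kappa}$, there is $t_1<T_+$ with $\|B_1 u(t)\|_{L^\infty(\Sigma)}<\wt{\kappa}$ on $[t_1,T_+)$. Picking $t_n\nearrow T_+$ in $[t_1,T_+)$ realizing the $\liminf$, I obtain for $n$ large all the hypotheses of Theorem~\ref{thm:local} at initial time $t_n$ with uniform constants: $d_m((t_n,t_n+\delta))\le r^2$, $\dist(\{u(t_n,x)\mid x\in G\},\partial\cU)>\kappa$, and $\|B_1 u(t_n)\|_{L^\infty(\Sigma)}<\wt{\kappa}$. Moreover the tuple $(t_n,\chi,\sigma,\zeta,u(t_n),f,g)$ automatically fulfils the compatibility conditions \eqref{eq:cc-nl} of order $m$, obtained by differentiating \eqref{eq:maxwell} and taking the time trace at $t_n$, using \eqref{eq:cc-nl0} and \eqref{eq:S-nl}.

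Then I would apply Theorem~\ref{thm:local} at initial time $t_n$ with data $(u(t_n),f,g)$ to get a solution $v_n\in G^m_\Sigma((t_n,t_n+\tau)\times G)$, where the existence time $\tau>0$ depends only on $\chi,\sigma,\zeta,m,\delta,r,\kappa,\wt{\kappa}$, hence is independent of $n$. Lemma~\ref{lem:unique} — applicable since both $u$ and $v_n$ have $B_1$--traces of sup-norm below $\wt{\kappa}$ and \eqref{ass:z} gives the needed smallness — yields $v_n=u$ on $[t_n,T_+)$, so $u$ extends to a $G^m_\Sigma$--solution on $(t_0,t_n+\tau]$. Choosing $n$ with $t_n>T_+-\tau$ then contradicts the definition of $T_+$. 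This proves the claim; in the linear case \eqref{ass:main1} the smallness plays no role, item (c) is removed, and the same steps go through verbatim with the simpler uniqueness statement.

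The step I expect to be the main obstacle is securing the uniformity of the existence time $\tau$ along $t_n\nearrow T_+$: this is precisely what the three alternatives are designed to handle, since ruling out (a), (b), (c) is exactly what delivers uniform lower bounds for the distance of $\ran u(t_n)$ to $\partial\cU$, uniform upper bounds for the $\cH^m$--size $d_m$ of the data, and uniform control of the tangential trace $\|B_1 u(t_n)\|_{L^\infty}$ governing the smallness condition \eqref{ass:z}. A secondary care point is the mildly self-referential choice of constants in (c): one must first extract $\kappa$ and $r$ from the failure of (a) and (b) and only afterwards fix $\wt{\kappa}$ via \eqref{ass:z}, as well as allow a little slack in $r$ so that $d_m$ over the short interval $(t_n,t_n+\delta)$ is still dominated by $r^2$ for all large $n$.
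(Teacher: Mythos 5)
Your proposal is correct and follows essentially the same route the paper intends: the paper omits the proof, referring to ``standard methods'' and Lemma~4.1 of \cite{Sp2}, i.e.\ exactly the continuation argument you give (glue local solutions, negate (a)--(c) to obtain uniform $\kappa$, $r$, $\wt{\kappa}$, re-apply Theorem~\ref{thm:local} at times $t_n\nearrow T_+$ with a uniform time step, and contradict maximality via Lemma~\ref{lem:unique}). The only cosmetic point is the quantifier in (c): its failure gives \emph{some} $\wt{\kappa}$ obeying \eqref{ass:z} with $\limsup\|B_1u(t)\|_{L^\infty(\Sigma)}<\wt{\kappa}$, which you should then use (any smaller $\wt{\kappa}$ still obeys \eqref{ass:z} since $z$ is nondecreasing), but this does not affect the argument.
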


\section{Local wellposedness}\label{sec:lwp}
In this section we improve the blow-up criterion of  Proposition~\ref{prop:max-sol} and show the 
continuous dependence on the data. For various quasilinear hyperbolic systems, one has established such criteria
in terms of  Lipschitz norms. (See Section~4 of \cite{Sp2} for references.) These results rely on Moser-type estimates
as stated in Lemma~4.2 of \cite{Sp2}. They will imply in partcular that the maximal existence time is independent of $m\ge 3$
in the case of linear boundary conditions.
The next proposition  is the key step in  this direction, where we recall \eqref{def:d}.

\begin{prop}\label{prop:lip}
	Let $m \in \NN$ with $m \geq 3$ and $t_0 \in \RR$.  Assume that either \eqref{ass:main} or  \eqref{ass:main1} is valid. 
	Choose data $u_0 \in \cH^m(G)^6$,  $f \in \cH^m((t_0,T)\times G))^6$,  and $g \in \cH^m((t_0,T)\times \Sigma)^3$ with 
$g\cdot \nu =0$ for $T>t_0$ such that the tuple $(t_0, \chi, \sigma,\zeta, u_0,f,g)$
fulfills the compatibility conditions~\eqref{eq:cc-nl} of order $m$.
   Let $u$ be the maximal solution of~\eqref{eq:maxwell}
    provided by Proposition~\ref{prop:max-sol} on $J_{\mathrm{max}}=(t_0,T_+)$. We introduce the quantity 
    \begin{align*}
     \omega(T) = \sup_{t \in (t_0,T)} \| u(t) \|_{W^{1,\infty}(G)}
    \end{align*}
    for every $T \in (t_0,T_+)$. We further take $r > 0$ with $d_m(J_{\mathrm{max}})\le r^2$.
   We set $T^* = T_+$ if $T_+ < \infty$ and pick any $T^* > t_0$ if $T_+ = \infty$. Take $\omega_0 > 0$ and a compact subset
    $\mathcal{U}_1$ of $\mathcal{U}$ such that  $\omega(T) \leq \omega_0$ and 
    $\ran u(t) \subseteq \mathcal{U}_1$ for all $t \in [t_0,T]$ and some $T \in (t_0,T^*)$.
    If \eqref{ass:main} is true, we also assume that~\eqref{ass:z} is valid for $\ol{\kappa}$ and that
    \beq \label{ass:z1}
   z (\ol{\kappa})^2\le 1/(2\tilde{C}_m),
    \eeq
    where $\ol{\kappa}= \|B_1 u\|_{L^\infty((t_0,T)\times \Sigma)}$
    and $\tilde{C}_m=\tilde{C}_m (\chi,\sigma,\zeta, m,r,\omega_0,\mathcal{U}_1,T^* - t_0)$ is defined as 
    $\max_{1 \leq k \leq m} \sum_{|\alpha| = k} C_{k,\alpha}$ with $C_{k,\alpha}$ appearing in~\eqref{est:ind-lip1}.

    Then there exists a constant $C = C(\chi,\sigma,\zeta, m,r,\omega_0,\mathcal{U}_1,T^* - t_0)$ such that
    \begin{align*}
     \|u\|_{G^m_\Sigma((t_0,T)\times G)}^2 &\leq C\, d_m((t_0,T)).
     \end{align*}
\end{prop}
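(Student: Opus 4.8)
Since $u$ is the maximal solution, Proposition~\ref{prop:max-sol} gives $u\in G^m_\Sigma((t_0,T)\times G)$, and (taking $t_0=0$) $u$ solves the linear system~\eqref{eq:maxwell-lin} with the frozen coefficients $A_0=\chi(u)$, $D=\sigma(u)$, $b=\zeta(B_1u)$, $A_j=A_j^\co$. By the localization procedure of Section~\ref{sec:aux} — whose coefficients inherit the relevant $F$‑norms of $\chi(u),\sigma(u),\zeta(B_1u)$ — it suffices to treat the half‑space system~\eqref{eq:maxwell-lin-i}; write $\Omega=(0,T)\times\E$, $\Gamma=(0,T)\times\partial\E$ and drop the superscript $i$. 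One cannot simply invoke Theorem~\ref{thm:reg}, because its constants $C_m,C_{m,0}$ and the seminorm $\rho$ all involve the $F^{\tilde m}$‑norms of the coefficients, hence $\|u\|_{\cH^m}$ — precisely the quantity to be bounded.

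The plan is therefore to run through the proofs of Proposition~\ref{prop:L2}, Lemma~\ref{lem:tang-est} and Theorem~\ref{thm:est} again, replacing the crude coefficient estimates by Moser‑type ones (Lemma~4.2 of~\cite{Sp2}) combined with the trace theorem and the computations around~\eqref{est:rho}–\eqref{est:rho1}. For $|\alpha|\le m$ this gives: the interior commutators $[\partial^\alpha,\chi(u)]\partial_ju$, $[\partial^\alpha,\sigma(u)]u$ and the time‑derivative terms in the $S$‑maps are bounded in $L^2(\Omega)$ by $C(\chi,\sigma,m,\omega_0,\mathcal U_1)\|u\|_{\cH^m_\ta(\Omega)}$ (the full $\cH^m$‑norm entering only linearly, everything else controlled by $\omega_0$ and $\mathcal U_1$), while the data maps $S_{m,p,\chi(u),\sigma(u)}(0,u_0,f)$ are controlled by $d_m(J)^{1/2}$ with a constant depending only on $r,\mathcal U,m$. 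For the boundary terms $\partial^\beta\zeta(B_1u)\,\tr_t\partial^{\alpha-\beta}u^1$ in $g_\alpha$ one splits $\partial^\alpha\zeta(B_1u)=S_{\textrm{main}}+S_{\textrm{rem}}$ as in~\eqref{eq:zeta(v)}: the remainder $S_{\textrm{rem}}$ and all products with $\beta\neq\alpha$ are estimated via Gagliardo–Nirenberg/Moser interpolation on $\Sigma$ and the trace theorem by $C(\zeta,m,\ol\kappa,\omega_0,\mathcal U_1)$ times $\|\tr_\tau u\|_{\cH^m(\Sigma)}$ resp.\ $\|u\|_{\cH^m_\ta(\Omega)}$ (linearly); the top‑order piece appears in $g_\alpha$ as $\partial_\xi\zeta(B_1u)\,(\partial^\alpha B_1u)\,\tr_t u^1$, and since the extra factor $\tr_tu^1$ is bounded in $L^\infty(\Gamma)$ by $\ol\kappa$ and $\|\partial_\xi\zeta(B_1u)\|_{L^\infty(\Gamma)}\le z_0(\ol\kappa)$, it obeys $\|\partial_\xi\zeta(B_1u)(\partial^\alpha B_1u)\tr_tu^1\|_{L^2(\Gamma)}\le z_0(\ol\kappa)\,\ol\kappa\,\|\tr_\tau\partial^\alpha u\|_{L^2(\Gamma)}=z(\ol\kappa)\|\tr_\tau\partial^\alpha u\|_{L^2(\Gamma)}$. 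The same structure occurs for the seminorm term $\delta_{m>2}\rho^2\|B_1u\|_{L^\infty}^2$ of~\eqref{est:tang},~\eqref{est:main}, with $\rho=[\zeta(B_1u)]_{\cH^m(\Sigma)}$ dominated by $z_0(\ol\kappa)\|\tr_\tau u\|_{\cH^m(\Sigma)}$ plus Moser‑controlled lower‑order terms.

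Feeding these bounds into the inductive scheme of Theorem~\ref{thm:est} and running the induction over $k\in\{1,\dots,m\}$ verbatim yields, for all $\gamma\ge\gamma_k$, an estimate
\begin{align}\label{est:ind-lip1}
\|u\|_{G^k_\gamma(\Omega)}^2+\gamma\|u\|_{\cH^k_{\ta,\gamma}(\Omega)}^2+\|\tr_\tau u\|_{\cH^k(\Gamma)}^2
&\le C_{k,0}\,d_k(J)+\sum_{|\alpha|=k}C_{k,\alpha}\,z(\ol\kappa)^2\,\|\tr_\tau u\|_{\cH^k(\Gamma)}^2\notag\\
&\quad+\frac{C_k}{\gamma}\,\|u\|_{G^k_\gamma(\Omega)}^2,
\end{align}
where $C_{k,0},C_{k,\alpha},C_k,\gamma_k$ depend only on $\chi,\sigma,\zeta,m,r,\omega_0,\mathcal U_1,T^*-t_0$ (the factors $\e^{kC_1T}$ being harmless since $T\le T^*$) and not on $\|u\|_{\cH^m}$; here all genuinely lower‑order contributions have been absorbed via the induction hypothesis and all remaining non‑lower‑order $\|u\|_{G^k}$‑terms collected into the $C_k/\gamma$‑term. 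Fixing $\gamma=\max_{1\le k\le m}\{\gamma_k,2C_k\}$ absorbs that last term by the left‑hand side, and by~\eqref{ass:z1} one has $z(\ol\kappa)^2\sum_{|\alpha|=k}C_{k,\alpha}\le z(\ol\kappa)^2\tilde C_m\le\tfrac12$, so the boundary term is absorbed too. Taking $k=m$ gives $\|u\|_{G^m_\gamma(\Omega)}^2\le 2C_{m,0}\,d_m(J)$, hence $\|u\|_{G^m(\Omega)}^2\le\e^{2\gamma(T^*-t_0)}\|u\|_{G^m_\gamma(\Omega)}^2\le C\,d_m((0,T))$; undoing the localization as in steps IV–VI of the proof of Theorem~5.6 in~\cite{Sp0} (resp.\ Theorem~3.1 in~\cite{SS}) yields the claim on $G$.

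The hard part will be the bookkeeping in the second step: one must check that \emph{every} constant multiplying a top‑order norm of $u$ is of Moser type, i.e.\ independent of $\|u\|_{\cH^m}$, which is delicate for the highest‑order boundary contributions. The trace terms have to be split so that the sole surviving non‑absorbable part is the one carrying the factor $z(\ol\kappa)$ (not merely $z_0(\ol\kappa)$); that this factor is $z(\ol\kappa)$ — and hence small under~\eqref{ass:z1} — rests on the structural fact that the boundary nonlinearity $\zeta(B_1u)B_1u$ always supplies an extra factor $B_1u$ of size $\ol\kappa$. The remaining verifications (the interior commutators, the $S$‑maps, the de‑localization) are routine adaptations of Section~\ref{sec:est} and of~\cite{Sp0,SS}.
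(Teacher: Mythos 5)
Your overall mechanism is the right one and coincides with the paper's: differentiate up to order $m$, estimate the commutators by Moser-type bounds (Lemma~4.2 of \cite{Sp2}) so that all constants depend only on $\omega_0$, $\mathcal U_1$, $r$, isolate the single top-order boundary contribution $\partial_\xi\zeta(B_1u)(\partial^\alpha B_1u)B_1u$, which carries the factor $z(\ol{\kappa})$, and absorb it using \eqref{ass:z1}. Where you diverge is the bookkeeping: you propose to rerun the weighted linear machinery (Lemma~\ref{lem:tang-est}, Proposition~3.3 of \cite{Sp1}, Theorem~\ref{thm:est}) with Moser-refined constants and to close by choosing $\gamma$ large, paying $\e^{2\gamma(T^*-t_0)}$ at the end. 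The paper instead applies the $L^2$ result (Proposition~\ref{prop:L2}) directly to each differentiated \emph{nonlinear} system for $\partial^\alpha u$, runs an inner induction over the number of normal derivatives $\alpha_3$ (quoted from Proposition~4.4 of \cite{Sp2}, resp.\ Proposition~7.20 of \cite{Sp0}, where the admissible constants are already established), absorbs the $z(\ol{\kappa})^2$-terms after summing over $|\alpha|=k$, and closes with Gronwall rather than with a large weight. Both routes buy the same thing; the paper's derivative-by-derivative $L^2$ scheme avoids having to revisit the proofs of Section~\ref{sec:est} and of \cite{Sp1} at all.

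Two points in your outline need repair before it is a proof. First, your estimate of the boundary products with $\beta\neq\alpha$ "by $C\,\|\tr_\tau u\|_{\cH^m(\Sigma)}$ linearly" is not usable as stated: that constant is not small, and a term $C^2\|\tr_\tau u\|_{\cH^m(\Gamma)}^2$ on the right cannot be absorbed by the trace term on the left (nor by sending $\gamma\to\infty$, since the boundary norm carries no $\gamma$-gain). These intermediate terms must be pushed into \emph{interior} norms via the trace inequality \eqref{est:trace-kappa} with $\kappa=\ep\gamma$, exactly as in the proof of Lemma~\ref{lem:tang-est}, or reduced to the $f_\alpha$-type terms by the trace theorem as in the paper's bound for $g_\alpha$; only the $\beta=\alpha$ piece may survive at the boundary, and it does so with the factor $z(\ol{\kappa})$. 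You acknowledge this in your closing paragraph, but the estimate you actually write down is the wrong one. Second, the claim that the induction of Theorem~\ref{thm:est} goes through "verbatim" is an overstatement for the normal direction: the bounds (3.36), (3.37) and (3.42) of \cite{Sp1} that enter \eqref{est:main1} have constants depending on the $F^{\tilde m}$-norms of the coefficients, hence on $\|u\|_{\cH^m}$ once the coefficients are $\chi(u)$, $\sigma(u)$; they would have to be re-derived with Moser-type constants as well. This is exactly the work the paper sidesteps by differentiating \eqref{eq:maxwell} itself and invoking the nonlinear normal-derivative induction of \cite{Sp2}, which needs no boundary condition. With these two repairs your argument closes and yields the same constant structure $\tilde C_m$ as in \eqref{est:ind-lip1}.
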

\begin{proof}
1) We focus on the assumption \eqref{ass:main} of a nonlinear boundary condition, since the linear one
in \eqref{ass:main1} is easily treated as in Proposition~4.4 of \cite{Sp2}.
Without loss of generality we assume $t_0 = 0$ and that, if $G$ is unbounded, the nonlinearities 
$\chi$ and $\sigma$ satisfy \eqref{ass:f-var}, cf.\ Remark~\ref{rem:bdd1}.  We fix a number $T'\in (0,T^*)$
such  that $\omega(T') \leq \omega_0$ and $\ran u(t) \subseteq \mathcal{U}_1$ for all $t\in[0,T']$. Let 
$T\in(0,T']$ and set $J=(0,T)$. As in the proof of Proposition~4.4 of \cite{Sp2}, we have to work with the localized nonlinear problem
on $G=\RR^3_+$ and coefficents $A_1,A_2\in F^{\tilde{m}}_{\cf}(\E)$ and $A_3 = A_3^{\co}$. The full space case has already been treated in Proposition~7.20 in \cite{Sp0}.
We do not repeat the localization procedure itself, cf.\ Section~\ref{sec:aux}. As in (4.3) of \cite{Sp2} we obtain a constant
$c=c(\chi,\sigma,r,\omega_0,\mathcal{U}_1,T^*)$ such that
\[ \|u\|_{W^{1,\infty}(\Omega)}\le \|\partial_t u\|_{L^\infty(\Omega)} +\omega(T) \le c.\]

We put $L(u)=  L(\chi(u),A_1, A_2, A_3, \sigma(u))$. Let  $\alpha \in \NN_0^4$ with $|\alpha| \leq m$. In view of differentiated versions 
of \eqref{eq:maxwell}, we define 
\begin{align*}
      f_\alpha &= \partial^\alpha f - \sum_{0 < \beta \leq \alpha} \binom{\alpha}{\beta} \partial^\beta \chi(u)  \partial_t \partial^{\alpha - \beta} u 
      - \sum_{j = 1}^2 \sum_{0 < \beta \leq \alpha} \binom{\alpha}{\beta} \partial^\beta A_j \partial_j \partial^{\alpha - \beta} u  \nonumber \\
     &\quad - \sum_{0 <\beta \leq \alpha} \binom{\alpha}{\beta} \partial^\beta \sigma(u) \partial^{\alpha - \beta} u, \\
     g_\alpha &= \partial^\alpha g + \nu\times \sum_{0 < \beta \leq \alpha} \binom{\alpha}{\beta} \partial^\beta \zeta(B_1^\co u)  
        \partial^{\alpha - \beta} B_1^\co u 
\end{align*}
As $u$ solves~\eqref{eq:maxwell}, the function $v=\partial^\alpha  u$ satisfies the system
    \begin{align}
       \label{eq:ivp-u-alpha}
       \begin{aligned}
       L(u) v& = f_{\alpha}, \qquad &&x \in \RR^3_+, \quad &t \in (0,T), \\
	 v(0) &= \partial^{(0,\alpha_1,\alpha_2,\alpha_3)} S_{ m, \alpha_0,\chi,\sigma}(0,u_0,f), &&x \in \RR^3_+. 
	\end{aligned}
    \end{align}
If additionally $\alpha_3 = 0$, it is a solution of the boundary value problem 
    \begin{align}\label{eq:ibvp-u-alpha}
    \begin{aligned}
	L(u)v &= f_{\alpha}, \qquad  &&x \in \RR^3_+, \quad &t \in (0,T), \\
  B(u)v&= g_\alpha,  &&x \in \partial \RR^3_+, &t \in (0,T), \\
	v(0) &= \partial^{(0,\alpha_1,\alpha_2,\alpha_3)} S_{ m, \alpha_0,\chi,\sigma}(0,u_0,f), &&x\in \RR^3_+.
      \end{aligned}
      \end{align} 
Here we used that $\partial_t^j u(0) = S_{m, j,\chi,\sigma }(0,u_0,f)$
    for all $j \in \{0, \ldots, m\}$ by~\eqref{eq:S-nl}.
    
Let $|\alpha'|\le m-1$. Step~I) of the proof of Proposition~4.4 of \cite{Sp2} shows that
\begin{align*}
\|f_\alpha\|_{L^2(\Omega)}&\le \|f\|_{\cH^{|\alpha|}(\Omega)}  + c\, \|u\|_{\cH^{|\alpha|}(\Omega)},\\
\|f_{\alpha'}\|_{\cH^1(\Omega)}+ \|f_{\alpha'}(0)\|_{L^2(\E)}&\le cd_{|\alpha'|+1}(J) + c \|u\|_{\cH^{|\alpha'|+1}(\Omega)}
\end{align*}
with a constant $c=c(\chi,\sigma,m,r,\omega_0,\mathcal{U}_1)$. The above results rely on Lemma~4.2 of \cite{Sp2}
which is actually true for $u\in \cH^m(\Omega)$, cf.\ Lemma~7.19 of \cite{Sp0}. 

As in \eqref{est:rho}, we reduce most terms in $g_\alpha$ to those appearing in $f_\alpha$ by means of the trace theorem. 
The main ones then lead to a summand involving $z(\ol{\kappa})$. So we arrive at
\begin{align*}
\|g_\alpha\|_{L^2(\Gamma)}&\le \|g\|_{\cH^{|\alpha|}(\Gamma)}  + c\, (\|u\|_{\cH^{|\alpha|}(\Omega)} 
    + z(\ol{\kappa})\,\|B_1^\co \partial^\alpha u\|_{L^2(\Gamma)} \big),\\
\|g_{\alpha'}\|_{\cH^1(\Gamma)}&\le \|g\|_{\cH^{|\alpha'|+1}(\Gamma)}  + c\,( \|u\|_{\cH^{|\alpha|}(\Omega)} 
  + z(\ol{\kappa})\,\sum_{|\beta|= |\alpha'| + 1}\|B_1^\co \partial^\beta u\|_{L^2(\Gamma)}\big),
\end{align*}
where $c = c(\chi, \sigma, \zeta, m, r, \omega_0, \cU_1)$.

 2) We next show  that there are constants 
    $C_{k} \!=\! C_k(\chi,\sigma,\zeta, m,r,\omega_0,\mathcal{U}_1,\!T^*\!)$ with
    \begin{align}\label{est:ind-lip}
     \|\partial^\alpha u\|_{G^0_\Sigma(\Omega) }^2 \leq C_k d_k(J)
    \end{align}
for all $\alpha \in \NN_0^4$ with $|\alpha| = k$ and $k \in \{0,\ldots,m\}$. Proposition~\ref{prop:L2}
yields the case  $k=0$ as in the proof 
of Proposition~4.4 of \cite{Sp2}. So let \eqref{est:ind-lip} be true for all $j\in \{0,\dots, k-1 \}$ and some $k \in \{1,\dots,m\}$. 
Take $\alpha \in \NN_0^4$ with $|\alpha| =k$. 
We first show that there is a constant $C_{k,\alpha}\ = C_{k,\alpha}(\chi,\sigma,\zeta, m,r,\omega_0,\cU_1,T^*)$ with
  \begin{align}\label{est:ind-lip1}
     &\|\partial^\alpha u\|_{G^0_\Sigma(\Omega) }^2 \\
     &\leq\! C_{k,\alpha} \Big[d_k(J) 
       + \!\!\sum_{|\beta| = k} \!\int_0^T \!\!\big[\|\partial^\beta u(s)\|_{L^2(\E)}^2\! + \!
       z(\ol{\kappa})^2\|B_1^\co\partial^\beta u(s)\|_{L^2(\partial \E)}^2\big]\D s \Big] \nonumber
    \end{align}
 for each $\alpha \in \NN_0^4$ with $|\alpha| =k$.   This claim is shown via induction over $\alpha_3$.
 
 So let $\alpha_3=0$. Since $\partial^\alpha u$ solves \eqref{eq:ibvp-u-alpha},
Proposition~\ref{prop:L2}, the bounds on $f_\alpha$ and $g_\alpha$, and estimate~\eqref{est:S-nl} yield a constant 
$c=c(\chi,\sigma,\zeta, k,r,\omega_0,\mathcal{U}_1,T^*)$ such that
\begin{align*}
\|\partial^\alpha u\|_{G^0_\Sigma(\Omega) }^2 \leq c\,\big(d_k(J) +\|u\|_{\cH^{|\alpha|}(\Omega)}^2
    + z(\ol{\kappa})^2\,\sum_{|\beta| = k} \|B_1^\co \partial^\beta u\|_{L^2(\Gamma)}^2\big).
\end{align*}  
The derivatives of $u$ of order up to $k-1$ can be bounded by  the induction hypothesis \eqref{est:ind-lip}. 
So we have shown \eqref{est:ind-lip1} for $k$ and $\alpha_3=0$. The other induction steps then only involve the
initial value problem \eqref{eq:ivp-u-alpha} without a boundary condition so that we can argue exactly as in
Proposition~4.4 of \cite{Sp2} to derive \eqref{est:ind-lip1} for all $\alpha_3\le k$.

We now sum in \eqref{est:ind-lip1} over all $\alpha\in \NN^4_0$ with $|\alpha| = k$. Assumption~\eqref{ass:z1} 
then allows to absorb the boundary terms in the left-hand side. Afterwards, we use Gronwall's inequality 
to control $\sum_{|\alpha| = k} \|\partial^\alpha u\|_{G^0(\Omega)}$ as in~(4.14) of~\cite{Sp2}.
Combining these two estimates, we finally obtain a constant $C_{k} \!=\! C_k(\chi,\sigma,\zeta, m,r,\omega_0,\mathcal{U}_1,\!T^*\!)$
such that
\begin{align}\label{est:tildeC}
\sum_{|\alpha|=k} \|\partial^\alpha u\|_{G^0_\Sigma(\Omega) }^2 \leq  C_{k} d_k(J).
\end{align} 
We have thus  shown \eqref{est:ind-lip}. The assertion now follows by induction.
\end{proof}

The blow-up criterion for \eqref{eq:maxwell} will be established in the local wellposedness Theorem~\ref{thm:lwp} below.
Before, we provide auxiliary results needed to show the continuous dependence on data, starting with an approximation
lemma in lowest order. Its proof is omitted since it is a minor modification of that of Lemma~5.1 in \cite{Sp2}.

\begin{lem}\label{lem:cont-dep1}
   Let $J \subset \RR$ be an open interval and $t_0 \in \ol{J}$.
   Take coefficients $A_{0,n}, A_0 \in F^3_{\eta}(\Omega)$, $A_1,A_2\in F^3_ {\cf}(\RR^3_+)$, $A_3 = A_3^{\co}$,
   $D_n, D \in F^3(\Omega)$, and $b_n,b\in  F^3_{\cH,\eta}(\Gamma)$ for all $n \in \NN$ 
   such that $(A_{0,n})_n$, $(D_n)_n$ respectively $(b_n)_n$ are bounded in $W^{1,\infty}(\Omega)$ respectively 
    $W^{1,\infty}(\Gamma)$ and converge to $A_0$,  $D$ respectively $b$ uniformly. 
   Let $B_j = B^{\operatorname{co}}_j$ for $j\in\{1,2\}$  and $G=\E$.
   Choose $u_0 \in L^2(\E)$,  $f  \in L^2(\Omega)$, and $g \!\in\! L^2(\Gamma)$ with 
$g\cdot \nu =0$. Let $u_n,u \in  G_\Sigma^0(\Omega)$ solve  the linear Maxwell system~\eqref{eq:maxwell-lin}
   with the above coefficients and data. Then  $(u_n)_n$ tends to $u$ in $G_\Sigma^0(\Omega)$.
\end{lem}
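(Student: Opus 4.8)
The plan is to reduce everything to the a priori estimate in Proposition~\ref{prop:L2}, exploiting its linearity in the data. First I would write $w_n = u_n - u$ and note that $w_n\in G^0_\Sigma(\Omega)$ solves the linear Maxwell system~\eqref{eq:maxwell-lin-i} with coefficients $A_{0,n}$, $A_1$, $A_2$, $A_3^\co$, $D_n$ and boundary coefficient $b_n$, but with modified data
\begin{align*}
  f_n &= (A_0 - A_{0,n})\partial_t u + (D - D_n) u, \qquad
  g_n = \nu\times\big((b_n - b)\,\tr_t u^1\big),
\end{align*}
and zero initial value. Indeed, subtracting the equation satisfied by $u$ (with coefficients $A_0, D, b$) from the one for $u_n$ and regrouping the terms produces exactly these right-hand sides; the initial conditions cancel since both $u_n$ and $u$ have initial value $u_0$. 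Here one has to observe that $\partial_t u$ lies in $L^2(\Omega)$ and $\tr_t u^1$ in $L^2(\Gamma)$ because $u\in G^0_\Sigma(\Omega)$, so that $f_n\in L^2(\Omega)$ and $g_n\in L^2(\Gamma)$ with $g_n\cdot\nu = 0$.

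Next I would apply Proposition~\ref{prop:L2} to $w_n$. Since the coefficients $A_{0,n}$, $D_n$, $b_n$ are uniformly bounded in $W^{1,\infty}$ (and $A_{0,n}, b_n\ge\eta I$), the constants $c$, $\gamma_0$ and $r$ in~\eqref{est:apriori-0} can be chosen uniformly in $n$. Fixing such a $\gamma\ge\gamma_0$, estimate~\eqref{est:apriori-0} with vanishing initial value gives
\begin{align*}
  \|w_n\|_{G^0_{\Sigma,\gamma}(\Omega)}^2 \le c\Big(\tfrac1\gamma\,\|f_n\|_{L^2_\gamma(\Omega)}^2 + \|g_n\|_{L^2_\gamma(\Gamma)}^2\Big).
\end{align*}
It then remains to show $f_n\to 0$ in $L^2(\Omega)$ and $g_n\to0$ in $L^2(\Gamma)$. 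For $f_n$ this is immediate: $\|f_n\|_{L^2(\Omega)}\le \|A_0 - A_{0,n}\|_{L^\infty(\Omega)}\|\partial_t u\|_{L^2(\Omega)} + \|D - D_n\|_{L^\infty(\Omega)}\|u\|_{L^2(\Omega)}\to 0$ by the assumed uniform convergence. Similarly $\|g_n\|_{L^2(\Gamma)}\le \|b_n - b\|_{L^\infty(\Gamma)}\|\tr_t u^1\|_{L^2(\Gamma)}\to 0$. Hence $w_n\to0$ in $G^0_{\Sigma,\gamma}(\Omega)$, and since $\Omega = J\times\E$ with $J$ bounded (or, if $J$ is unbounded, after passing to the $\gamma$-weighted norm which is what $G^0_{\Sigma,\gamma}$ already is) this is equivalent to convergence in $G^0_\Sigma(\Omega)$, giving the claim.

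The only genuinely delicate point is bookkeeping rather than analysis: one must check carefully that the difference $u_n - u$ really solves~\eqref{eq:maxwell-lin} with the $n$-dependent coefficients and the stated data — in particular that the boundary term $B(t)$, which is affine in $b$, produces precisely the extra source $\nu\times((b_n - b)\tr_t u^1)$ when one writes $B_n(t) w_n = B_n(t) u_n - B_n(t) u = g - (B_n(t) - B(t))u$ and unfolds $B_n(t)-B(t) = -\nu\times(b_n - b)B_1^\co$. Since this is a minor modification of the argument in Lemma~5.1 of \cite{Sp2}, and the paper explicitly says the proof is omitted for that reason, I would likewise only indicate these steps. No regularity beyond the $L^2$-level is involved, so there is no loss-of-derivative issue here.
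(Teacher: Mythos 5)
Your reduction to Proposition~\ref{prop:L2} via $w_n=u_n-u$ is the natural first idea, but the step in which you assert that $\partial_t u$ lies in $L^2(\Omega)$ ``because $u\in G^0_\Sigma(\Omega)$'' is false, and the argument breaks exactly there. A $G^0_\Sigma$-solution with data merely in $L^2$ satisfies only $u\in C(\ol{J},L^2(\E))$ and $\tr_\tau u\in L^2(\Gamma)$; the evolution equation then gives $\partial_t u\in L^2(J,\cH^{-1}(\E))$ and nothing better (this is stated at the beginning of Section~\ref{sec:est}). Consequently $f_n=(A_0-A_{0,n})\partial_t u+(D-D_n)u$ is in general not an $L^2(\Omega)$ function -- uniform convergence of $A_{0,n}$ does not compensate the missing derivative -- so $w_n$ does not solve \eqref{eq:maxwell-lin-i} with $L^2$ data and the estimate \eqref{est:apriori-0} cannot be applied to it. (The boundary source is unproblematic, since $\tr_t u^1\in L^2(\Gamma)$ does hold.)

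The repair, and this is what the proof modelled on Lemma~5.1 of \cite{Sp2} does, is a regularization combined with a three-term argument. Approximate $(u_0,f,g)$ in $L^2$ by data $(u_0^k,f^k,g^k)$ of class $\cH^1$ with $g^k\cdot\nu=0$ which satisfy the order-one compatibility condition $B(t_0)u_0^k=g^k(t_0)$ for the limit coefficient $b$ (this can be arranged, e.g., by adjusting $g^k$ near $t=t_0$; here the inhomogeneous, $b$-dependent boundary condition is the ``minor modification'' compared to \cite{Sp2}). By Theorem~\ref{thm:reg} with $m=1$ (the coefficients are in $F^3$ by hypothesis) the limit problem with these data has a solution $u^k\in G^1_\Sigma(\Omega)$, so that $\partial_t u^k\in L^2(\Omega)$ and $\tr_t (u^k)^1\in L^2(\Gamma)$, while Proposition~\ref{prop:L2} gives $G^0_\Sigma$-solutions $u_n^k$ of the $n$-th problem with the same data. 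Your difference computation now applies verbatim to $v_n^k=u_n^k-u^k$ (only $A_0$, $D$ and $b$ depend on $n$, so no spatial derivatives of $u^k$ occur), and \eqref{est:apriori-0}, whose constants are uniform in $n$ by the uniform $W^{1,\infty}$ bounds and the common lower bound $\eta$, yields $v_n^k\to0$ in $G^0_\Sigma(\Omega)$ for each fixed $k$. The remaining pieces $u_n-u_n^k$ and $u-u^k$ are controlled, again by \eqref{est:apriori-0} and uniformly in $n$, by the $L^2$-distance of $(u_0^k,f^k,g^k)$ to $(u_0,f,g)$; a $3\varepsilon$-argument then gives $u_n\to u$ in $G^0_\Sigma(\Omega)$. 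Without this regularization step your proof does not go through.
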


The next result is the core of the proof of continuous dependence. It improves the norm in which solutions converge by
one regularity level, provided one has appropriate apriori information.

\begin{lem}\label{lem:cont-dep2}
Let $J \subseteq \RR$ be an open  bounded interval, $t_0 \in \overline{J}$, and $m \in \NN$ with $m \geq 3$.
	Assume that either \eqref{ass:main} or  \eqref{ass:main1} is valid.
	Choose data $u_0, u_{0,n}\in \cH^m(G)$,  $f,f_n \in \cH^m(J\times G)$,  and $g,g_n \in \cH^m(J\times \Sigma)$ with 
$g\cdot \nu =0$ and $g_n\cdot \nu =0$ for all $n\in\NN$ such that
 \begin{align*}
    \|u_{0,n} - u_0\|_{\cH^m(G)} \longrightarrow 0, \quad \|f_n - f\|_{\cH^m(J\times G)} \longrightarrow 0, 
    \quad   \|g_n - g\|_{\cH^m(J\times \partial G)}\longrightarrow 0,
   \end{align*}
   as $n \rightarrow \infty$.  We further assume that~\eqref{eq:maxwell} 
   with data $(t_0, u_{0,n}, f_n, g_n)$ and $(t_0, u_0, f, g)$ have $G^m_\Sigma(J\times G)$-solutions
   $u_n$ and $u$ for all $n \in \NN$, that there is a compact subset $\mathcal{U}_1$ of $\mathcal{U}$ 
   with $\ran u(t) \subseteq \mathcal{U}_1$ for all $t \in J$, that $(u_n)_n$ is bounded in 
   $G^m_\Sigma(J\times G)$, and that $(u_n)_n$ converges to $u$ in $G^{m-1}_\Sigma(J \times G)$. If \eqref{ass:main} is valid, we require that 
   \beq\label{ass:z2}
    z(\ol{\kappa})^2\le 1/(2\hat{C}_m)
   \eeq 
    for a fixed number $\ol{\kappa}>\|B_1 u\|_{L^\infty(\Gamma)} $, where    $\hat{C}_m=\hat{C}_m(\chi,\sigma,\zeta,r,\mathcal{U}_1,T')$ appears in \eqref{est:un-u},
    $|J|\le T'$, and $r$ only depends on $d_m(J)$, $\|u\|_{G^m_\Sigma(J\times G)}$ and $\Omega$.
   Then the solutions $u_n$ converge to $u$ in $G^m_\Sigma(J\times G)$.
\end{lem}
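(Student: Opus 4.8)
The plan is to reduce the nonlinear problem to the linear one for differences, iterating the gain-of-regularity argument from Lemma~\ref{lem:cont-dep1} up to level $m$. First I would localize as in Section~\ref{sec:aux}, so that it suffices to prove convergence on $\E$ with coefficients $A_1,A_2\in F^{\tilde m}_\cf(\E)$, $A_3=A_3^\co$; the interior piece $i=0$ is covered by the full-space results of \cite{Sp2}. Fix $J=(t_0,T)$ and write $A_{0,n}=\chi(u_n)$, $D_n=\sigma(u_n)$, $b_n=\zeta(B_1 u_n)$, and analogously $A_0,D,b$ for $u$. Since $(u_n)_n$ is bounded in $G^m_\Sigma$ and converges in $G^{m-1}_\Sigma$, Lemma~2.1 of \cite{Sp1} and Corollary~2.2 of \cite{Sp2} give that $(A_{0,n},D_n,b_n)$ are bounded in the relevant $F$-spaces, converge to $(A_0,D,b)$ in $W^{1,\infty}$, and in fact in $F^{m-1}$, while only boundedness (not convergence) is available in $F^m$ and $F^m_\cH$.

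Next I would set up the differenced system. The difference $w_n:=u_n-u$ satisfies the linear system \eqref{eq:maxwell-lin} with coefficients $A_{0,n}$, $D_n$, $b_n$ and data
\begin{align*}
 w_n(t_0)&=u_{0,n}-u_0,\\
 \tilde f_n &= (f_n-f) + (\chi(u)-\chi(u_n))\partial_t u + (\sigma(u)-\sigma(u_n))u,\\
 \tilde g_n &= (g_n-g) + B_0\big(\zeta(B_1 u)-\zeta(B_1 u_n)\big)B_1 u .
\end{align*}
By hypothesis $\|u_{0,n}-u_0\|_{\cH^m}\to 0$, $\|f_n-f\|_{\cH^m}\to0$, $\|g_n-g\|_{\cH^m}\to0$; for the remaining terms in $\tilde f_n,\tilde g_n$ I would use the Moser-type estimates of Lemma~2.1 of \cite{Sp1} together with the Lipschitz bounds of Corollary~2.2 of \cite{Sp2} applied to $\chi,\sigma,\zeta$ on $\tilde V_\kappa$. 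The crucial point, exactly as in \eqref{est:rho1}, is that the highest-order part of $\big(\zeta(B_1 u)-\zeta(B_1 u_n)\big)B_1 u$ on the boundary produces a term of the shape $z(\ol\kappa)\,\|B_1^\co\partial^\alpha w_n\|_{L^2(\Gamma)}$ for $|\alpha|=m$, which cannot be treated as data and must be absorbed. So I would keep these boundary terms on the left-hand side and estimate everything else in $\cH^{m-1}$ by $\|w_n\|_{G^{m-1}_\Sigma}\to0$ and $\cH^m$-data norms times $o(1)$.

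The heart of the proof is then an induction on the order of derivatives, mirroring the proof of Proposition~\ref{prop:lip} and of Theorem~5.3 of \cite{Sp2}. At level $k$, for $\alpha\in\NN_0^4$ with $|\alpha|=k$, one differentiates the differenced system; when $\alpha_3=0$ one has a boundary value problem for $\partial^\alpha w_n$ of the form \eqref{eq:ibvp-u-alpha} and applies Proposition~\ref{prop:L2}, collecting (i) data terms that go to $0$ from the lower-order inductive hypothesis and the $\cH^m$-convergence of $(u_{0,n},f_n,g_n)$, (ii) commutator terms bounded by $\|w_n\|_{G^{k-1}_\Sigma}$ hence $\to0$ by induction, and (iii) the bad boundary term $z(\ol\kappa)^2\sum_{|\beta|=k}\|B_1^\co\partial^\beta w_n\|_{L^2(\Gamma)}^2$. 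For $\alpha_3\ge1$ one uses instead the boundary-free initial value problem \eqref{eq:ivp-u-alpha} and the normal-derivative estimates from \cite{Sp1}, as in Proposition~\ref{prop:lip}. Summing over $|\alpha|=k$ and invoking \eqref{ass:z2} — which says $z(\ol\kappa)^2\le 1/(2\hat C_m)$, with $\hat C_m$ the sum of the constants $C_{k,\alpha}$ from the analogue of \eqref{est:ind-lip1} — lets one absorb the boundary term into the left-hand side; Gronwall then closes the estimate, giving a bound
\begin{align*}
\sum_{|\alpha|=k}\|\partial^\alpha w_n\|_{G^0_\Sigma(\Omega)}^2 \le \hat C_m\Big(\text{data}_n \ +\ \|w_n\|_{G^{k-1}_\Sigma(\Omega)}^2\Big)=:\varepsilon_n\to0=:\eqref{est:un-u}
\end{align*}
by the inductive hypothesis at level $k-1$ and the convergence of the data. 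After $m$ steps this yields $\|u_n-u\|_{G^m_\Sigma(J\times G)}\to0$ on $\E$, and the localization procedure (as in Section~5 of \cite{Sp0} or Section~3 of \cite{SS}) transfers it to $G$. The main obstacle is precisely the bookkeeping of the boundary terms coming from the highest derivatives of $b_n=\zeta(B_1 u_n)$: one only has $F^m_\cH$-boundedness, not convergence, of $b_n$, so these terms genuinely need the smallness assumption \eqref{ass:z2} and must be carried along through every induction step rather than dismissed as lower-order; a secondary technical nuisance is ensuring the constant $\hat C_m$ in \eqref{ass:z2} depends only on $r$, $\mathcal{U}_1$, $T'$ and the fixed material laws, uniformly in $n$, which is where the uniform bounds on $(u_n)_n$ in $G^m_\Sigma$ and Remark~\ref{rem:bdd1} for unbounded $G$ enter.
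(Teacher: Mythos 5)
Your overall architecture (localization to $\E$, the role of the highest-order boundary terms of $\zeta(B_1u_n)$, absorption via \eqref{ass:z2}, Gronwall) matches the paper, but the central step has a genuine gap: you cannot run the argument by differentiating the differenced system up to order $m$. Your difference $w_n=u_n-u$ solves the linear system with inhomogeneity $\tilde f_n$ containing $(\chi(u)-\chi(u_n))\partial_t u$ (and similarly on the boundary). Applying $\partial^\alpha$ with $|\alpha|=m$ produces, via Leibniz, the term $(\chi(u)-\chi(u_n))\,\partial^\alpha\partial_t u$ — equivalently, writing the equation for $\partial^\alpha(u_n-u)$ with operator $L_n$ forces the term $(L_\infty-L_n)\partial^\alpha u=(\chi(u)-\chi(u_n))\partial_t\partial^\alpha u+\dots$ into the data. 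This involves derivatives of $u$ of order $m+1$, which do not exist for $u\in G^m_\Sigma(J\times G)$; the same problem appears if you freeze the coefficients at $u$ instead of $u_n$, since then $m+1$ derivatives of $u_n$ are needed. Hence Proposition~\ref{prop:L2} cannot be applied to the differentiated difference equation at the top order, and your item (i) ("data terms that go to $0$") breaks down exactly at the level you need to gain. (For $k\le m-1$ your induction is harmless but also unnecessary, since convergence in $G^{m-1}_\Sigma$ is already a hypothesis of the lemma.)

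The paper circumvents this by never forming a difference equation at top order. For $|\alpha|=m$, $\alpha_3=0$, it writes the equation \eqref{eq:ibvp-un-alpha} satisfied by $\partial^\alpha u_n$ itself (coefficients $\chi(u_n),\sigma(u_n),\zeta(B_1^\co u_n)$, data $f_{\alpha,n},g_{\alpha,n}$) and splits $\partial^\alpha u_n=w_n+z_n$: the function $w_n$ solves the system with the $n$-dependent coefficients but the \emph{fixed} limit data $(f_{\alpha,\infty},g_{\alpha,\infty},w_{0,\infty})$, which only involve $m$ derivatives of $u$, so $w_n\to\partial^\alpha u$ in $G^0_\Sigma$ by the $L^2$-perturbation Lemma~\ref{lem:cont-dep1} (varying coefficients, fixed data — precisely the device that replaces your difference equation); the remainder $z_n$ carries the data differences $f_{\alpha,n}-f_{\alpha,\infty}$, $g_{\alpha,n}-g_{\alpha,\infty}$, which are controlled by \eqref{est:fg-n-infty} in terms of the data convergence, $\|u_n-u\|_{G^{m-1}}$, the sup-norm differences $h_n$ of the nonlinearities, the time integral of the top-order differences, and the $z(\ol{\kappa})$-weighted boundary term. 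Only then does one sum, absorb via \eqref{ass:z2}, and apply Gronwall as in \eqref{est:un-u}. To make your proof work you would need to add this splitting (or an equivalent regularization of $u$) rather than estimating $\partial^\alpha(u_n-u)$ directly.
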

\begin{proof}
1) We focus on the assumption \eqref{ass:main} of a nonlinear boundary condition, since the linear one
in \eqref{ass:main1} is easily treated as in Lemma~5.2 of \cite{Sp2}.
Without loss of generality we take $t_0 = 0$, $J=(0,T)$ and that, if $G$ is unbounded, the nonlinearities 
$\chi$ and $\sigma$ satisfy \eqref{ass:f-var}, cf.\ Remark~\ref{rem:bdd1}. Moreover, $T$ is less or equal  
than a fixed time $T'<\infty$. As in Proposition~\ref{prop:lip} we have to work with the localized 
nonlinear problem on $G=\RR^3_+$ and coefficients $A_1,A_2\in F^{\tilde{m}}_{\cf}(\E)$ and $A_3 = A_3^{\co}$. 
We do not repeat the localization procedure itself, cf.\ Section~\ref{sec:aux}.

Throughout, we let $\alpha\in \NN^4_0$ with $|\alpha| \le m$ and $n\in\NN\cup\{\infty\}$, where we put $u_\infty=u$ etc.
Due to our assumptions, we can fix a number $r>0$ that only depends on $d_m(J)$, $\|u\|_{G^m_\Sigma(J\times G)}$, and $\Omega$
and  that dominates the quantities $d_m^n(J)$ for the data $(u_{0,n}, f_n, g_n)$
and the norms of $u_n$ in $G^{m}_\Sigma(\Omega)$ and $L^\infty(\Omega)$ as well as  $A_1$ and $A_2$ in $F^m(\Omega)$.
Here and in the next statement we may omit $n\le n_0$ for some $n_0\in\NN$.
Let $\kappa=\dist (\cU_1,\partial \cU)>0$ and $\cU_1'= \cU_1 +\ol{B}(0,\kappa/2).$  Then we obtain
 $\ol{\kappa}>\|B_1 u_n\|_{L^\infty(\Gamma)} $ and $\ran u_n(t)\sub \cU_1'$ for all $t\in\ol{J}$ and $n\in\NN$.
There is another radius $R=R(\chi, \sigma,\zeta,m, r,\mathcal{U}_1)$ dominating the functions $\chi(u_n)$ and $\sigma(u_n)$ in $F^m(\Omega)$
and $\zeta(B_1^\co u_n)$ in $F^m_{\cH}(\Gamma)$.

Let $L_n$ and $B_n$ be the differential and boundary operator from \eqref{eq:maxwell-lin} with coefficients $A_0=\chi(u_n)$ and $D=\sigma(u_n)$ 
respectively $b=\zeta(B_1^\co u_n)$. We use the modified inhomogeneities 
\begin{align*}
      f_{\alpha,n} &= \partial^\alpha f - \sum_{0 < \beta \leq \alpha} \binom{\alpha}{\beta} \partial^\beta \chi(u_n)  \partial_t \partial^{\alpha - \beta} u_n 
      - \sum_{j = 1}^2 \sum_{0 < \beta \leq \alpha} \binom{\alpha}{\beta} \partial^\beta A_j \partial_j \partial^{\alpha - \beta} u_n  \nonumber \\
     &\quad - \sum_{0 <\beta \leq \alpha} \binom{\alpha}{\beta} \partial^\beta \sigma(u_n) \partial^{\alpha - \beta} u_n, \\
     g_{\alpha,n} &= \partial^\alpha g + \nu\times \sum_{0 < \beta \leq \alpha} \binom{\alpha}{\beta} \partial^\beta \zeta(B_1^\co u_n)  
     \partial^{\alpha - \beta} B_1^\co u_n,
\end{align*}
where we assume that $\alpha_3=0$ when considering $g_{\alpha,n}$ here and below.
Exploiting that $A_3$ and $B_j^\co$  are constant, we see that the function $v=\partial^\alpha u_n$  
solves the linear initial boundary value problem
   \begin{align}
   \label{eq:ibvp-un-alpha}
   \begin{aligned}
L_n v &= f_{\alpha,n}, \qquad &&x \in \RR^3_+, \quad &t \in J, \\
B_nv &=  g_{\alpha,n}, &&x \in \partial\RR^3_+, &t \in J,\\
   			v(0) &= \partial^{(0,\alpha_1,\alpha_2,\alpha_3)} S_{m,\alpha_0,\chi,\sigma}(0,u_{0,n},f_{n}), &&x \in \RR^3_+,
   \end{aligned}
   \end{align}
 if $\alpha_3 = 0$. We further introduce the auxiliary map
   \begin{align*}
    h_n(t) &= \sum_{i=1}^3 \sum_{0 \leq j \leq m} \, \sum_{\substack{0 \leq \gamma \leq \alpha, \gamma_0 = 0 \\ |\gamma| = m-j}} \,
\sum_{l_1,\ldots,l_{j} = 1}^6 \|(\partial_{y_{l_{j}}} \ldots \partial_{y_{l_1}} \partial_x^{(\gamma_1,\gamma_2,\gamma_3)}\theta_i)(u_n(t)) \notag\\
  &\hspace{14em} - (\partial_{y_{l_{j}}} \ldots \partial_{y_{l_1}} \partial_x^{(\gamma_1,\gamma_2,\gamma_3)}\theta_i)(u(t))\|_{L^\infty(\RR^3_+)}\\
  &\quad+ \sum_{0 \leq j \leq m} \, \sum_{\substack{0 \leq \gamma \leq \alpha', \gamma_0 = 0 \\ |\gamma| = m-j}} \,
\sum_{l_1, \ldots, l_{j} = 1}^3 \|(\partial_{y_{l_{j}}} \ldots \partial_{y_{l_1}} \partial_x^{(\gamma_1,\gamma_2)}\zeta)(B^\co_1 u_n(t)) \nonumber \\
  &\hspace{12em} - (\partial_{y_{l_{j}}} \ldots \partial_{y_{l_1}} \partial_x^{(\gamma_1,\gamma_2)}\zeta)(B^\co_1u(t))\|_{L^\infty(\partial\RR^3_+)},
   \end{align*}
where $t \in \overline{J}$, $n \in \NN$, $\theta_1 = \chi$, $\theta_2 = \sigma$, $\theta_3=\chi^{-1}$, and $\alpha'\in\NN^3_0$ with $|\alpha'|=m$.
Observe that the functions $h_n$ tend to 0 uniformly as $n\to\infty$.

Using the calculus results Lemma~2.1 of \cite{Sp1} and Corollary~2.2 of \cite{Sp2}, one can show that all maps $f_{\alpha,n}$ 
and $g_{\alpha,n}$ are bounded in  $L^2(\Omega)$ respectively $L^2(\Gamma)$ by a constant $c=c(\chi,\sigma,\zeta,m,r,\mathcal{U}_1,T')$.
If $|\alpha|\le m-1$, then we have analogous bounds in $\cH^1(\Omega)$, $G^0(\Omega)$ respectively,  $\cH^1(\Gamma)$. We further derive the inequalities
\begin{align}\label{est:fg-n-infty}
\|f_{\alpha,n} \! -  f_{\alpha,\infty}\|^2_{\cH^k(\Omega)} 
   &\leq c\Big[\|f_n \!- f\|_{\cH^m(\Omega)}^2 + \|u_n\! - u\|_{G^{m-1}(\Omega)}^2 + \delta_{|\alpha| (m-k)} \|h_n\|_\infty^2 \nonumber\\
&\hspace{5em} +\int_0^T\sum_{\tilde{\alpha}\in\NN_0^4,|\tilde{\alpha}| = m} \|\partial^{\tilde{\alpha}} (u_n(s)-u(s))\|_{L^2(\E)}^2\dd s\Big],\notag\\
 \|f_{\alpha,n} \! -  f_{\alpha,\infty}\|^2_{G^0(\Omega)} 
   &\leq c\,\big(\|f_n - f\|_{G^{m-1}(\Omega)}^2 + \|u_n - u\|_{G^{m-1}(\Omega)}^2   \big),\\
   \|g_{\alpha,n} \! -  g_{\alpha,\infty}\|^2_{\cH^k(\Omega)} 
   &\leq c\Big[\|g_n \!- g\|_{\cH^m(\Gamma)}^2 + \|u_n\! - u\|_{G^{m-1}(\Omega)}^2 + \delta_{|\alpha| (m-k)} \|h_n\|_\infty^2 \nonumber\\
   &\quad +\int_0^T\sum_{\tilde{\alpha}\in\NN_0^4,|\tilde{\alpha}| = m} \|\partial^{\tilde{\alpha}} (u_n(s)-u(s))\|_{L^2(\E)}^2\dd s \nonumber \\
   &\quad \ +z(\ol{\kappa})^2\int_0^T\sum_{\tilde{\alpha}\in\NN_0^3,|\tilde{\alpha}|=m}
          \|B_1^\co \partial^{\tilde{\alpha}}(u_n(s) - u(s))\|_{L^2(\partial\E)}^2\dd s\Big]\notag
 \end{align}
 for $k\in\{0,1\}$ and $|\alpha| \leq m-1$, using also \eqref{est:rho} and \eqref{est:rho1}.
 Here the first and the last estimate are also true 
 for $|\alpha| = m$ in the case $k = 0$.
 
 \smallskip
 
2)  We first treat tangential derivatives with $\alpha_3=0$. We set $w_{0,n}=\partial^{(0,\alpha_1,\alpha_2,0)} S_{m,\alpha_0,\chi,\sigma}(0,u_{0,n},f_{n})$.
To decompose $\partial^\alpha u_n= w_n+ z_n$, we use the solution $w_n\in G^0_\Sigma (\Omega)$ of the linear system 
 \begin{align}
 \label{eq:ibvp-wn-alpha}
 \begin{aligned}
L_n v &= f_{\alpha,\infty}, \qquad &&x \in \RR^3_+, \quad &t \in J,\\
B_nv &=  g_{\alpha,\infty}, &&x \in \partial\RR^3_+, &t \in J,\\
v(0) &= w_{0,\infty}, &&x \in \RR^3_+,
  \end{aligned}
  \end{align}
with fixed data, and $z_n\in G^0_\Sigma (\Omega)$ of the linear problem
\begin{align}
  \label{eq:ibvp-zn-alpha}
  \begin{aligned}
L_n v &= f_{\alpha,n}-f_{\alpha,\infty}, \qquad &&x \in \RR^3_+, \quad &t \in J,\\
B_nv &=  g_{\alpha,n}- g_{\alpha,\infty}, &&x \in \partial\RR^3_+, \quad &t \in J,\\
v(0) &=w_{0,n}-w_{0,\infty}, &&x \in \RR^3_+,
\end{aligned}
   \end{align} 		
with data tending to 0 as we show below. These solutions exist due to   Proposition~\ref{prop:L2}, and we have $w_\infty= \partial^\alpha u$ by uniqueness
and \eqref{eq:ibvp-un-alpha}. By our assumptions, the coefficients $\chi(u_n)$, $\sigma(u_n)$  and $\zeta(B_1^\co u_n)$ converge uniformly
to $\chi(u)$, $\sigma(u)$  respectively $\zeta(B_1^\co u)$. In view of the estimates in step~1), Lemma~\ref{lem:cont-dep1} shows 
\beq\label{est:wn-winfty}
\|w_n- \partial^\alpha u\|_{G^0_\Sigma(\Omega)} = \|w_n- w_\infty\|_{G^0_\Sigma(\Omega)}\ \longrightarrow \ 0, \qquad n\to\infty.
\eeq
Let  $\gamma= \gamma(\chi, \sigma,\zeta,m, r,\mathcal{U}_1, T') \geq 1$ be the parameter $\gamma_0(\eta, R)$ from Proposition~\ref{prop:L2}.
We now apply this result to \eqref{eq:ibvp-zn-alpha} and argue as in (5.22) of \cite{Sp2}. By means of \eqref{est:fg-n-infty}, we thus obtain
\begin{align}\label{est:zn}
&\|z_n\|_{G^0_\Sigma(\Omega)}^2 \le c\Big[\tilde{d}_m^n(J)+\|u_n - u\|_{G^{m-1}(\Omega)}^2+ \|h_n\|_\infty^2 \\
  & +\! \sum_{\tilde{\alpha},\alpha'}\int_0^T \!\!\!\big(\|\partial^{\tilde{\alpha}} (u_n(s) - u(s))\|_{L^2(\E)}^2 
   +z(\ol{\kappa})^2\|\partial^{\alpha'}B_1^\co(u_n(s) - u(s))\|_{L^2(\partial\E)}^2\big)\dd s\Big] \notag
\end{align}
for a constant $c=c(\chi, \sigma,\zeta,m, r,\mathcal{U}_1, T')$, where we sum over all multi-indices $\tilde\alpha\in \NN^4_0$ and
$\alpha'\in \NN^3_0$ with $|\tilde{\alpha}|,|\alpha'| = m$ and the quantity
$\tilde{d}_m^n(J)$ is defined as in \eqref{def:d} for $u_{0,n}-u_0$, $f_n-f$ and $g_n-g$. We write $I_n(T)$ for the above sum of integrals.
Since $\partial^\alpha(u_n-u)= w_n-\partial^\alpha u + z_n$ by uniqueness again, estimates 
\eqref{est:wn-winfty} and \eqref{est:zn} imply  the bound
\begin{align*}
\|\partial^\alpha(u_n-u)\|_{G^0_\Sigma(\Omega)} &\le a_{\alpha,n} + c I_n(T)
\end{align*}
for numbers $a_{\alpha,n}$ tending  to 0 as $n\to\infty$. As in step~III) of Lemma~5.2 of \cite{Sp2} an induction 
extends the above estimate to the case of all $\alpha_3\le m$. These arguments do not involve  the boundary conditions, so that
there is no need to repeat them here. We obtain as in  \cite{Sp2} the inequality
\begin{align}\label{est:un-u}
&\sum_{|\tilde{\alpha}|,|\alpha'| = m}\big(\|\partial^{\tilde{\alpha}} (u_n(t)-u(t))\|_{L^2(\E)}^2 + \|\partial^{\alpha'} \tr_\tau (u_n-u)\|_{L^2(\Gamma)}^2 \big)\\
 &\le a_n + \hat{C}_m \int_0^t \|D^{\tilde{\alpha}} (u_n(s) - u(s))\|_{L^2(\E)}^2 \dd s
   +z(\ol{\kappa})^2 \hat{C}_m  \|D^{\alpha'}B_1^\co(u_n- u)\|_{L^2(\Gamma)}^2\notag
\end{align}
for $t\in \ol{J}$, a null sequence $(a_n)$ and a constant $\hat{C}_m=\hat{C}_m(\chi,\sigma,\zeta,r,\mathcal{U}_1,T')$. The notation $D^{\alpha}$ also includes
the summation over $|\alpha| = m$, where $\tilde\alpha\in \NN^4_0$ and $\alpha'\in \NN^3_0$.
Finally, we first use the smallness assumption on $z(\ol{\kappa})$ and then Gronwall's inequality to conclude the assertion.
\end{proof}

We finally establish the full local wellposedness theorem. 
 For times $t_0 < T$ we introduce the data space 
  \begin{align*}
     &M_{\chi,\sigma,\zeta,m}(t_0,T) = \{( \tilde{u}_0,\tilde{f},\tilde{g}) \in \cH^m(G)\times\cH^m((t_0,T) \times G) \times \cH^m((t_0,T) \times \Sigma)  \,|\, \\
	&\hspace{3.5cm} (\chi, \sigma, \zeta, t_0, \tilde{f}, \tilde{g}, \tilde{u}_0) \text{ is compatible of order } m, \; \tilde{g}\cdot \nu=0\}
    \end{align*}
  and endow it  with its natural norm.
  
\begin{thm}\label{thm:lwp}
Let $m \!\in\! \NN$ with $m \!\geq\! 3$ and $t_0 \!\in \!\RR$. 
Assume that either \eqref{ass:main} or  \eqref{ass:main1} is valid.
	Choose data $u_0 \in \cH^m(G)$,  $f\in \cH^m((t_0,T)\times G))$,  and $g\in \cH^m(((t_0,T)\times \Sigma)$ with 
$g\cdot \nu =0$ for all $T>t_0$ such that $\overline{\ran(u_0)} \!\subseteq \!\mathcal{U}$ and 
   the tuple $(\chi,\sigma,\zeta, t_0,u_0,f,g)$ fulfills the compatibility 
   conditions~\eqref{eq:cc-nl} of order $m$. If assumption \eqref{ass:main} is true, we 
   pick $\wt{\kappa} > 0$ satisfiying~\eqref{ass:z} and we require $\|B_1 u_0\|_{L^\infty(\Sigma)} < \wt{\kappa}/4$.
   
   Then the maximal existence times $T_+(k, t_0, u_0,f,g)$ from~\eqref{def:max-time}
   do not depend on $k\in\{3,\dots,m\}$ if  \eqref{ass:main1} is true. Moreover, the following assertions hold.
   \smallskip
  
 {\rm (1)}    There exists a unique maximal solution $u$ of~\eqref{eq:maxwell}
    which belongs to the function space $G^m_\Sigma((t_0,T)\times G)$ for all $T<T_+$.
    
   \smallskip 
   
 {\rm   (2)} If $T_+ < \infty$, then
	\begin{enumerate}
	 \item the solution $u$ leaves every compact subset of $\mathcal{U}$, or
	 \item  $\limsup_{t \nearrow T_+} \|\nabla u(t)\|_{L^{\infty}(G)} = \infty$, or
	 \item  condition (c) from  Proposition~\ref{prop:max-sol} occurs  or \eqref{ass:z1} fails as $T\to T_+$.
	\end{enumerate}
	If \eqref{ass:main1} is valid, the last condition can be dropped.
	\smallskip
	
{\rm (3)} Let $T \in (t_0, T_+)$. Fix $T'\in(T,T_+)$. If assumption \eqref{ass:main} is true, 
let \eqref{ass:z2} hold on $(t_0, T')$ and assume that $\|B_1 u\|_{L^\infty((t_0,T') \times \Sigma)} < \wt{\kappa}/4$ 
and that~\eqref{est:hat-kappa} is valid for $\wt{\kappa}$. (The constants in these conditions depend on 
$r$ from \eqref{est:lwp-data}, $(T'-t_0)$, and $\kappa=\tfrac12 \dist(\mathcal{U}_1, \partial \mathcal{U})$ for
a compact subset $\mathcal{U}_1 \sub \mathcal{U}$  with $\ran (u(t)) \subseteq \mathcal{U}_1$ 
for all $t \in [t_0, T']$.) 
Then there is a number 
    $\delta > 0$ such that for all data  $(\tilde{u}_0, \tilde{f}, \tilde{g})\in M_{\chi,\sigma,\zeta,m}(t_0,T)$  fulfilling
    \begin{equation*}
   \|\tilde{u}_0 - u_0\|_{\cH^m(G)}  < \delta, \quad  \|\tilde{f} - f\|_{\cH^m((t_0,T')\times G)} < \delta, \quad
    \|\tilde{g} - g\|_{\cH^m((t_0, T') \times \Sigma)} < \delta
    \end{equation*}
    the maximal existence time satisfies $T_+(m, t_0, \tilde{f},\tilde{g}, \tilde{u}_0) > T$. 
    Let $u(\cdot; \tilde{u}_0, \tilde{f}, \tilde{g})$ be the corresponding maximal 
    solution of~\eqref{eq:maxwell}. The flow map
    \begin{align*}
     \Psi\colon B_{M_{\chi,\sigma,\zeta,m}(t_0, T')}((u_0,f,g), \delta)  &\rightarrow G^m_\Sigma((t_0,T) \times G) , \quad
     ( \tilde{u}_0,\tilde{f},\tilde{g}) \mapsto u(\cdot;  \tilde{u}_0,\tilde{f}, \tilde{g}), 
    \end{align*}
    is continuous. Moreover, there is a constant 
    $C = C(\chi, \sigma,\zeta, m,r,T' - t_0, \kappa)$ with
    \begin{align}\label{est:lwp-lip}
     \|\Psi(\tilde{u}_{0,1},&\tilde{f}_1, \tilde{g}_1) - \Psi( \tilde{u}_{0,2},\tilde{f}_2, \tilde{g}_2)\|_{G^{m-1}_\Sigma((t_0,T) \times G)} \\
&\leq C \|\tilde{u}_{0,1} - \tilde{u}_{0,2}\|_{\cH^{m}(G)} + C \sum_{j = 0}^{m-1}\|\partial_t^j \tilde{f}_1(t_0) - \partial_t^j \tilde{f}_2(t_0)\|_ {\cH^{m-j-1}(G)} \notag\\
       &\qquad   +C \,\| \tilde{f}_1 - \tilde{f}_2 \|_{H^{m-1}((t_0,T) \times G)}
       + C \,\|\tilde{g}_1 - \tilde{g}_2\|_{\cH^{m-1}((t_0,T) \times \Sigma)} \nonumber
    \end{align}
    for all $(\tilde{u}_{0,j},\tilde{f}_j, \tilde{g}_j) \in B_{M_{\chi,\sigma,\zeta,m}(t_0, T')}((u_0,f,g), \delta)$.
\end{thm}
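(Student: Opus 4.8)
The proof follows the scheme of Section~5 and Theorem~1.1 of~\cite{Sp2}, combining Theorem~\ref{thm:local}, Proposition~\ref{prop:max-sol}, the apriori bound of Proposition~\ref{prop:lip}, and the stability results Lemmas~\ref{lem:cont-dep1} and~\ref{lem:cont-dep2}. Throughout we discuss the nonlinear boundary condition~\eqref{ass:main}; under~\eqref{ass:main1} one has \(z(\ol{\kappa})=0\) and all smallness hypotheses are vacuous, so the argument simplifies. \emph{Items (1), (2) and the \(m\)-independence.} Existence, uniqueness and the maximal solution are the content of Proposition~\ref{prop:max-sol}, so only the refinement of the blow-up alternative has to be shown. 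Assume \(T_+<\infty\) and that neither (a) nor (b) holds, that condition (c) of Proposition~\ref{prop:max-sol} does not occur, and that~\eqref{ass:z1} does not fail as \(T\to T_+\). Since (a) fails, \(\ran u(t)\) stays in a fixed compact \(\mathcal{U}_1\Subset\mathcal{U}\); since (b) fails and \(\mathcal{U}_1\) is bounded, \(\omega(T)=\sup_{(t_0,T)}\|u(t)\|_{W^{1,\infty}(G)}\le\omega_0\) for all \(T<T_+\). Hence Proposition~\ref{prop:lip} applies on each \([t_0,T]\) and gives \(\|u\|_{G^m_\Sigma((t_0,T)\times G)}^2\le C\,d_m(J_{\mathrm{max}})\) uniformly in \(T<T_+\), so \(\limsup_{t\nearrow T_+}\|u(t)\|_{\cH^m(G)}<\infty\); together with the failure of (a) and of condition (c) of Proposition~\ref{prop:max-sol}, this contradicts that proposition. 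For the \(m\)-independence under~\eqref{ass:main1} one has \(T_+(m)\le\dots\le T_+(3)\) because \(G^k_\Sigma\subset G^{k-1}_\Sigma\) and, by uniqueness, the maximal solutions agree; conversely, if \(T_+(m)<T_+(3)\), the maximal \(G^3_\Sigma\)-solution is of class \(G^3_\Sigma\) on an interval strictly containing \([t_0,T_+(m)]\), so on \([t_0,T_+(m)]\) its range lies in a compact subset of \(\mathcal{U}\) and its \(W^{1,\infty}\)-norm is bounded; Proposition~\ref{prop:lip} at level \(m\) (no smallness is needed in the linear case) then bounds the \(\cH^m(G)\)-norm of the maximal \(G^m_\Sigma\)-solution up to \(T_+(m)\), so by Proposition~\ref{prop:max-sol} at level \(m\) (which under~\eqref{ass:main1} reduces to alternatives (a)/(b)) it cannot blow up at \(T_+(m)\), contradicting the definition of \(T_+(m)\). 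Thus all \(T_+(k)\), \(k\in\{3,\dots,m\}\), coincide.

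\emph{Part (3): the flow map is defined and \(T_+>T\) near \((u_0,f,g)\).} Fix \(T<T'<T_+\), a compact \(\mathcal{U}_1\supseteq\ran u([t_0,T'])\), \(\kappa=\tfrac12\dist(\mathcal{U}_1,\partial\mathcal{U})\), \(\mathcal{U}_1'=\mathcal{U}_1+\ol{B}(0,\kappa/2)\Subset\mathcal{U}\), \(M=\|u\|_{G^m_\Sigma((t_0,T')\times G)}\), and set \(\omega_0=2C_{\mathrm{emb}}(M+1)\) with \(C_{\mathrm{emb}}\) the norm of \(\cH^m(G)\hra W^{1,\infty}(G)\). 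For data \((\tilde u_0,\tilde f,\tilde g)\) at \(\cH^m\)-distance \(<\delta\le1\) from \((u_0,f,g)\), Theorem~\ref{thm:local} gives a local \(G^m_\Sigma\)-solution \(\tilde u\). Let \(T_{**}\) be the supremum of those \(s\in(t_0,\min\{T',T_+(\tilde u)\})\) for which on \([t_0,s]\) one has \(\ran\tilde u(t)\subseteq\mathcal{U}_1'\), \(\|B_1\tilde u\|_{L^\infty((t_0,s)\times\Sigma)}\le\tilde\kappa/2\), and \(\omega(\tilde u;s)\le\omega_0\). On \((t_0,T_{**})\), Proposition~\ref{prop:lip} applies to \(\tilde u\) with the data \((\tilde u_0,\tilde f,\tilde g)\), the set \(\mathcal{U}_1'\), the bound \(\omega_0\), the smallness conditions~\eqref{ass:z} and~\eqref{ass:z1} (which hold for our \(\tilde\kappa\)), and a radius \(r\) depending only on \(d_m((t_0,T'))\); it yields a \emph{fixed} bound \(\|\tilde u\|_{G^m_\Sigma((t_0,s)\times G)}\le R_0\). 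Writing \(\tilde u-u\) as the solution of~\eqref{eq:maxwell-lin} with coefficients \(\chi(u),\sigma(u),\zeta(B_1 u)\) and the inhomogeneities given by the differences of the nonlinearities, cf.\ step~IV) of the proof of Theorem~3.3 in~\cite{Sp2}, estimating these by the product rules of Lemma~2.1 of~\cite{Sp1}, the Lipschitz bounds for composition operators (Corollary~2.2 of~\cite{Sp2}),~\eqref{est:rho1} and the uniform bound \(R_0\), and invoking Theorem~\ref{thm:reg} on level \(m-1\) together with Gronwall's inequality, we obtain \(\|\tilde u-u\|_{G^{m-1}_\Sigma((t_0,T_{**})\times G)}\le\ep(\delta)\) with \(\ep(\delta)\to0\). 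Since \(\cH^{m-1}(G)\hra L^\infty(G)\) and \(\cH^{m-1}(G)\to\cH^{m-3/2}(\Sigma)\hra L^\infty(\Sigma)\) for \(m\ge3\), this gives \(\sup_t\|\tilde u(t)-u(t)\|_{L^\infty(G)}\to0\) and \(\sup_t\|B_1(\tilde u-u)(t)\|_{L^\infty(\Sigma)}\to0\), while interpolation between \(\cH^{m-1}(G)\) and \(\cH^m(G)\) combined with \(\|\tilde u\|_{G^m_\Sigma},\|u\|_{G^m_\Sigma}\le R_0+M\) gives \(\sup_t\|\tilde u(t)-u(t)\|_{W^{1,\infty}(G)}\le c\,\ep(\delta)^{\theta}(R_0+M)^{1-\theta}\to0\) for some \(\theta\in(0,1)\) (this step genuinely uses the \(G^m_\Sigma\)-bound when \(m=3\), since then \(\cH^{m-1}(G)\not\hra W^{1,\infty}(G)\)). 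For \(\delta\) small all three defining conditions of \(T_{**}\) hold with strict inequality; as \(\tilde u\) is bounded in \(G^m_\Sigma\) it does not blow up at \(T_{**}\), so \(T_{**}=\min\{T',T_+(\tilde u)\}\), whence \(T_+(\tilde u)>T'>T\) and \(\|\tilde u\|_{G^m_\Sigma((t_0,T')\times G)}\le R_0\), with range in \(\mathcal{U}_1'\) and \(\|B_1\tilde u\|_{L^\infty}\le\tilde\kappa/2\).

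\emph{Part (3): Lipschitz estimate and continuity of \(\Psi\).} For two data tuples in \(B_{M_{\chi,\sigma,\zeta,m}(t_0,T')}((u_0,f,g),\delta)\) the solutions \(u_1,u_2\) satisfy the bounds of the previous paragraph, and \(u_1-u_2\) solves~\eqref{eq:maxwell-lin} with coefficients from \(u_1\) and the inhomogeneities \((\chi(u_1)-\chi(u_2))\partial_t u_2+(\sigma(u_1)-\sigma(u_2))u_2\) on \(\Omega\) and \(\nu\times(\zeta(B_1 u_1)-\zeta(B_1 u_2))B_1 u_2\) on the boundary. Applying Theorem~\ref{thm:reg} on level \(m-1\), controlling the inhomogeneities by the product and composition bounds together with the Lipschitz estimate for the maps \(S_{m,p,\chi,\sigma}\) from Lemma~2.4 of~\cite{Sp2} (which is where the \(\cH^m(G)\)-norm of the difference of the initial data enters) and~\eqref{est:rho1} (absorbing the \(z(\tilde\kappa)\)-term by the smallness of \(\tilde\kappa\) and the remaining term for small \(T-t_0\)), and using Gronwall's inequality, yields~\eqref{est:lwp-lip}. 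Finally, continuity of \(\Psi\) into \(G^m_\Sigma((t_0,T)\times G)\) follows sequentially: if the data converge in \(M_{\chi,\sigma,\zeta,m}(t_0,T')\) to \((u_0,f,g)\), then by the previous paragraph the solutions \(u_n\) are defined on \([t_0,T']\), bounded in \(G^m_\Sigma\), have ranges in the fixed compact \(\mathcal{U}_1'\) and satisfy \(\|B_1 u_n\|_{L^\infty}\le\tilde\kappa/2\), and by~\eqref{est:lwp-lip} they converge to \(u\) in \(G^{m-1}_\Sigma\); Lemma~\ref{lem:cont-dep2}, whose smallness hypothesis~\eqref{ass:z2} holds for our \(\tilde\kappa\), then upgrades this to convergence in \(G^m_\Sigma((t_0,T)\times G)\).

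The main obstacle is to close the continuation argument of the second paragraph without circularity: the apriori bound of Proposition~\ref{prop:lip} needs a priori control of \(\omega(\tilde u;s)\) and of the range, which is gained from the difference estimate, which itself uses that bound. This is resolved by fixing \(\omega_0\), \(\mathcal{U}_1'\) and \(r\) in terms of \(u\) alone \emph{before} invoking Proposition~\ref{prop:lip}, so that the resulting \(R_0\) is a fixed number, and then exploiting the \emph{strict} improvement of the three \(T_{**}\)-conditions for small \(\delta\); the latter rests on turning \(G^{m-1}_\Sigma\)-closeness (plus the \(G^m_\Sigma\)-bound) into \(W^{1,\infty}\)-closeness by interpolation, which is the one genuinely delicate point when \(m=3\). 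A secondary nuisance is to select a single threshold \(\tilde\kappa\) meeting the various smallness conditions~\eqref{ass:z},~\eqref{ass:z1},~\eqref{ass:z2} (with their different constants) and the bound \(\|B_1 u_0\|_{L^\infty(\Sigma)}<\tilde\kappa/4\) simultaneously.
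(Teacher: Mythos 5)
Parts (1), (2) and the $m$-independence in your first paragraph are correct and follow the paper's (standard) route via Theorem~\ref{thm:local} and Propositions~\ref{prop:max-sol} and~\ref{prop:lip}. The problem is part (3), where you replace the paper's stepwise construction by a continuous-induction argument anchored on Proposition~\ref{prop:lip}, and this route has a genuine gap. To obtain the uniform bound $R_0$ you apply Proposition~\ref{prop:lip} to the perturbed solution $\tilde u$; this requires \eqref{ass:z1} with the constant $\tilde C_m(\chi,\sigma,\zeta,m,r,\omega_0,\mathcal U_1',T'-t_0)$ attached to $\tilde u$, and your continuation past $T_{**}$ via the blow-up alternative of Proposition~\ref{prop:max-sol} additionally needs item (c) of that proposition to be excluded for $\tilde u$, i.e.\ \eqref{ass:z} with constants built from $\tilde u$ near $T_{**}$. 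Neither smallness condition is granted by the hypotheses of the theorem, which for part (3) only provide \eqref{ass:z}, \eqref{ass:z2} and \eqref{est:hat-kappa} with constants determined by $u$, $r$, $\kappa$ and $T'-t_0$; your closing remark acknowledges the "nuisance" but does not resolve it. The paper's proof deliberately avoids Proposition~\ref{prop:lip} in part (3): the uniform $G^m_\Sigma$-bound for the perturbed solutions is the radius $R$ of \eqref{def:local-R} produced by Theorem~\ref{thm:local} on each subinterval of length $\tau$, which needs only \eqref{ass:z}.

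Second, the difference bound $\|\tilde u-u\|_{G^{m-1}_\Sigma((t_0,T_{**})\times G)}\le\varepsilon(\delta)$ cannot be obtained by one application of Theorem~\ref{thm:reg} plus Gronwall on the whole interval. The nonlinear boundary term, estimated via \eqref{est:rho1}, contributes a summand of the form $(C_{m-1,0}+TC_{m-1})\,\e^{(m-1)C_1T}\,\ol C\,\bigl(T+z(\wt\kappa)^2\bigr)\|\tilde u-u\|^2_{G^{m-1}_\Sigma}$ on the right-hand side; this is a full-interval quantity (including the tangential trace norm) of the very unknown being estimated, so Gronwall does not apply to it and it must be absorbed into the left-hand side, which forces both $z(\wt\kappa)^2$ and the length of the time interval to be small relative to the constants --- exactly what \eqref{est:hat-kappa} and the time step $\hat\tau$ encode. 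Consequently a time-stepping iteration over subintervals of length $\tau$ is unavoidable, and at each restart time $t_k$ one must re-verify the hypotheses of Theorem~\ref{thm:local} (distance of the range to $\partial\cU$, $\|B_1\tilde u(t_k)\|_{L^\infty(\Sigma)}<\wt\kappa/2$, $G^m$-bound $2r$); the paper secures this by invoking Lemma~\ref{lem:cont-dep2}, i.e.\ $G^m_\Sigma$-continuity, already inside the iteration, not only at the end. Your parenthetical "absorbing \dots the remaining term for small $T-t_0$" concedes the point, but $T-t_0$ is a fixed, possibly large, number; the same objection applies to your one-shot derivation of \eqref{est:lwp-lip}. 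Your interpolation device for converting $G^{m-1}_\Sigma$-closeness plus a $G^m_\Sigma$-bound into $W^{1,\infty}$-closeness is fine in itself, but it sits downstream of the two steps above and cannot repair the argument as written.
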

\begin{proof}
Except for (3) the assertions follow in a standard way from Theorem~\ref{thm:local} and Propositions~\ref{prop:max-sol} and \ref{prop:lip},
cf.\ the proof of Theorem~~5.3 in \cite{Sp2}. 

1) To show (3), let $t_0< T< T'<T_+$ be as in the statement and $J'=(t_0,T')$. Again, we focus on assumptions
\eqref{ass:main} concerning nonlinear boundary conditions since the linear ones from \eqref{ass:main1} can be treated 
very similar to the proof of  Theorem~5.3 in \cite{Sp2}. Let $C_S'$ be the norm of the embedding of $\cH^m(J' \times G)$  
in $G^{m-1}(J'\times G)$ and $C_S$ of $\cH^2(G)$ into $C_b(\ol{G})$. We take radii $r>0$ such that
\begin{align}\label{est:lwp-data}
&\|u_0\|_{\cH^m(G)} +  \|f\|_{G^{m-1}(J' \times G)} + \|f\|_{\cH^m(J' \times G)} 
  +\|g\|_{\cH^m(J' \times\Sigma)}  < r/(mC_S'), \nonumber\\
    & \|u\|_{G^m_\Sigma(J' \times G)} < r, 
\end{align} 
Let  $\mathcal{U}_1\sub \mathcal{U}$ and $\wt{\kappa}>4\|B_1 u\|_{L^\infty((t_0,T') \times \Sigma)}$ be given as in the statement.
As in the proof of  Theorem~~5.3 in \cite{Sp2} one finds a radius
$\tilde{r} = \tilde{r}(\chi,\sigma,m,r, \mathcal{U}_1)$  larger than the norms of $\theta(u)$ 
in $F^m((t_0, T') \times G)$, of $\zeta(B_1u)$ in $F^m_{\cH}((t_0, T') \times \Sigma)$, of $\theta(u(t_0))$ in $F^{m-1,0}(G)$
and that of $\partial_t^j\theta(u)(t_0)$ in $\cH^{m-1-j}(G)$ for $j\in\{1,\dots, m-1\}$ and $\theta\in\{\chi,\sigma\}$.
We fix a number $\kappa< \tfrac12\dist(\mathcal{U}_1, \partial \mathcal{U})$  and set
\[ V_{\kappa} = \{y \in \mathcal{U}\,|\,  \dist(y, \partial \mathcal{U}) \geq \kappa \}  \cap \overline{B}(0,2C_S r) 
\quad \text{ and } \quad \tilde{V}_\kappa = V_\kappa +\ol{B}(0,\kappa/2)\sub \cU.  \]
We take 
$R=R(\chi,\sigma,\zeta,m,4r,\kappa,T') > 4r$ of \eqref{def:local-R} in the proof of  Theorem~\ref{thm:local}.

Choose a number $\hat{T}\in(t_0,T')$ and data  $( \hat{u}_0,\hat{f},\hat{g}) \in M_{\chi,\sigma,\zeta,m}(J')$
such that $\hat{u}_0$ maps into $V_\kappa$ and the data satisfy the bounds \eqref{est:lwp-data} with $2r$ instead of $r$.
Let $\hat{J}=(t_0, \hat{T})$.
We assume that a solution $\hat{u}\in G^m_\Sigma(\hat{J}\times G)$ of \eqref{eq:maxwell} exists for these data
with norm less or equal $R$ in this space and taking values in  $\tilde{V}_\kappa$.
 Let $\hat{\kappa}\ge \ol{\kappa}$ bound the supnorm of $B_1 \hat{u}$
on $(t_0, \hat{T})\times \Sigma$. There then exists a constant  
$\hat{C}= \hat{C}(\chi,\sigma,\zeta, m,2r,R,\tilde{V}_\kappa, T')$ and a time step 
$\hat{\tau}= \hat{\tau}(\chi,\sigma,\zeta, m,2r,R,\tilde{V}_\kappa, \hat{\kappa}, T')$ such that the difference
of $u$ and $\hat{u}$ is controlled by
\begin{align}\label{est:lwp-dep0}
\|u-\hat{u}\|_{G^{m-1}_\Sigma(\hat{J}\times G)}^2 &\leq \hat{C}\Big( \|u_0 - \hat{u}_{0}\|_{\cH^{m-1}(G)} ^2
  + \sum_{j = 0}^{m-1}\|\partial_t^j f(t_0) - \partial_t^j \hat{f}(t_0)\|_ {\cH^{m-j-1}(G)}^2 \notag \\
       &\qquad \quad  +\| f - \hat{f} \|_{H^{m-1}(\hat{J} \times G)}^2
       +\|\hat{g}_1 - \hat{g}_2\|_{\cH^{m-1}(\hat{J} \times \Sigma)}^2 \Big),
\end{align}
where   $\hat{J}=(t_0,t_0+\hat{\tau})$ and we assume that 
\beq \label{est:hat-kappa}
\hat{\tau}, z(\hat{\kappa})^2\le (4\hat{C}\ol{C}(R))^{-1}
\eeq
with $\ol{C}(R)$ from \eqref{est:rho1}.
Using Proposition~\ref{prop:L2} and \eqref{est:rho1}, this fact can be shown as (5.31) in \cite{Sp2} with modifications
 analogous  to \eqref{est:local-contr}.  
 
 \smallskip

2) We take as a time step $\tau$ the minimum of  $\hat{\tau}$ in step~1), of $\wt{\kappa}/(2C_S R)$, and of
$ \tau(\chi, \sigma, \zeta,m, T', 2r,\kappa, \wt{\kappa})$ from \eqref{def:local-tau}.  There is an  index $N \in \NN$ 
   with $t_0 + (N-1) \tau < T \leq t_0 + N \tau.$
We set $t_k = t_0 + k \tau$ for $k \in \{1, \ldots, N-1\}$. If
    $t_0 + N \tau < T'$, we put $t_N = t_0 + N \tau$; else we take any $t_N$ from $(T, T')$.
Next, we choose a radius $\delta_0>0$ which is less than $r/(4mC_S')$, $\wt{\kappa}/(4C_S)$, and $\kappa/C_S$.
Let $( \tilde{u}_0,\tilde{f},\tilde{g}) \in B_{M_{\chi,\sigma,\zeta,m}(t_0, T')}((u_0,f,g), \delta_0)=:B_M(\delta_0)$. 
As in (5.37) and (5.38) of \cite{Sp2} one sees that these data satisfy the bounds \eqref{est:lwp-data} with $2r$ 
instead of $r$, that $\|B_1 \tilde{u}_0\|_{L^\infty(\Sigma)} < \wt{\kappa}/2$, and that the range of $\tilde{u}_0$
is contained in $V_\kappa$.

As a result, Theorem~\ref{thm:local} yields a solution $\tilde{u}\in G^m_\Sigma((t_0,t_1)\times G)$ of \eqref{eq:maxwell}
with data $( \tilde{u}_0,\tilde{f},\tilde{g})$ instead of $(u_0,f,g)$. The proof of this theorem also shows that $\tilde{u}$
is bounded by $R$ in $G^m_\Sigma((t_0,t_1)\times G)$ and thus $\Psi$ maps $B_M(\delta_0)$ into the ball in 
$G^m_\Sigma((t_0,t_1)\times G)$ with  center 0 and radius $R$.  
Moreover, both $\|B_1 u\|_{L^\infty((t_0,t_1) \times \Sigma)}$ and $\|B_1 \tilde{u}\|_{L^\infty((t_0,t_1) \times \Sigma)}$ are smaller 
than $\wt{\kappa}$ by the choice of $\tau$. It follows that  estimate \eqref{est:lwp-dep0} is true for $\tilde{u}$ instead of $\hat u$, 
with time step $\tau$ and constant $\hat{C}$, because $\wt{\kappa}$ satisfies~\eqref{est:hat-kappa}. 

Next take a sequence $(u_{0,n}, f_n, g_n)_n$ in $B_{M_{\chi,\sigma,\zeta,m}(t_0,T')}((u_0,f,g),\delta_0)$ 
which converges to $(u_0,f,g)$ in this space.    Since 
\[\sum\nolimits_{j = 0}^{m-1} \|\partial_t^j f_n(t_0) - \partial_t^j f(t_0)\|_{\cH^{m-j-1}(G)}^2 
     \leq m C'_S \|f_n - f\|_{\cH^m((t_0, T') \times G)}^2 \ \longrightarrow \ 0     \]
    as $n \rightarrow \infty$, estimate~\eqref{est:lwp-dep0}    yields the limit
    \begin{align*}
     \|\Psi(u_{0,n},f_n,g_n) - \Psi(u_0,f,g)\|_{G^{m-1}_\Sigma((t_0,t_1)\times G)} \ \longrightarrow \ 0.
    \end{align*}
Lemma~\ref{lem:cont-dep2}   thus shows that $(\Psi(u_{0,n}, f_n, g_n))_n$ converges to
$\Psi(f,g,u_0)$ in $G^{m}_\Sigma((t_0,t_1) \times G)$.   
We conclude that $\Psi$ is continuous in $(u_0,f,g)$.
Using \eqref{est:lwp-data} and the choice of $\kappa$,
we then find a number $\delta_1 \in (0, \delta_0]$ such that 
    for all data $(\tilde{u}_0,\tilde{f}, \tilde{g} ) \in B_{M_{\chi,\sigma,\zeta,m}(t_0,T')}((u_0,f,g),\delta_1)$
    the solution $\Psi(\tilde{u}_0,\tilde{f}, \tilde{g})$ exists on $[t_0,t_1]$ and 
    satisfies~\eqref{est:lwp-dep0} on $(t_0,t_1)$ and
    \begin{align*}
     &\| \Psi( \tilde{u}_0, \tilde{f}, \tilde{g})\|_{G^m_\Sigma((t_0,t_1) \times G)} \\
     &\quad \leq \| \Psi( \tilde{u}_0, \tilde{f},\tilde{g},) - \Psi(u_0, f,g)\|_{G^m_\Sigma((t_0,t_1) \times G)} 
        + \|\Psi(u_0,f,g)\|_{G^m_\Sigma((t_0,t_1) \times G)} < 2r, \\
     &\|B_1 \tilde{u}(t_1)\|_{L^\infty(\Sigma)} < \wt{\kappa}/2, \\
     &\dist(\ran \Psi(\tilde{u}_0,\tilde{f}, \tilde{g})(t),\partial \mathcal{U}) >  \kappa,
    \end{align*}
    for all $t \in [t_0,t_1]$. In particular, $\Psi(\tilde{u}_0, \tilde{f}, \tilde{g})(t_1)$ 
    satisfies the assumptions of Theorem~\ref{thm:local} with the same parameters as used before.

    \smallskip
   
3) As in the proof of Theorem~5.3 in \cite{Sp2} we can iterate the above argument up to time $t_N\ge T$, arriving at a final 
radius $\delta:=\delta_N$ for the data. In particular,  the final existence time $T_+(m, t_0, \tilde{u}_0,\tilde{f},\tilde{g})$ 
is larger than $T$ if $(\tilde{u}_{0}, \tilde{f}, \tilde{g})$ belongs to $B_M(\delta)$.
Next fix  two tuples $( \tilde{u}_{0,j},\tilde{f}_j, \tilde{g}_j)$ from this ball.
Replacing $u$ by $\Psi(\tilde{u}_{0,2}, \tilde{f}_2, \tilde{g}_2)$ in step I), we deduce from~\eqref{est:lwp-dep0} that
\begin{align}\label{est:lwp-dep}
  &\|\Psi(\tilde{u}_{0,1}, \tilde{f}_1, \tilde{g}_1) - \Psi(\tilde{u}_{0,2},\tilde{f}_2, \tilde{g}_2)\|_{G^{m-1}_\Sigma((t_0,T) \times G)}^2 \\
     &\leq \hat{C}\Big(  \|\tilde{u}_{0,1} - \tilde{u}_{0,2}\|_{\cH^m(G)}^2 
       + \| \tilde{f}_1 - \tilde{f}_2 \|_{\cH^{m-1}((t_0,T) \times G)}^2 
       + \|\tilde{g}_1 - \tilde{g}_2 \|_{\cH^{m-1}((t_0, T) \times \Sigma)}^2 \notag\\
     &\qquad + \sum\nolimits_{j = 0}^{m-1} \|\partial_t^j \tilde{f}_1(t_0) - \partial_t^j \tilde{f}_2(t_0)\|_{\cH^{m-j-1}(G)}^2  \Big), \notag
    \end{align}
    where $\hat{C}= \hat{C}(\chi,\sigma,\zeta, m,2r,R,\tilde{V}_\kappa,T')$.
    This estimate implies~\eqref{est:lwp-lip}. 
    
Finally, take a sequence $(\tilde{u}_{0,n},\tilde{f}_n, \tilde{g}_n)_n$ in $B_M(\delta)$ with limit 
$(\tilde{u}_{0,1}, \tilde{f}_1, \tilde{g}_1)$ in this ball. Inequality \eqref{est:lwp-dep} shows that
 the solutions $\Psi( \tilde{u}_{0,n}, \tilde{f}_n, \tilde{g}_n)$ tend to $\Psi(\tilde{u}_{0,1},\tilde{f}_1, \tilde{g}_1)$ 
    in  $G^{m-1}_\Sigma((t_0,T)\times G)$ as $n \rightarrow \infty$. Lemma~\ref{lem:cont-dep2} thus shows that
    this convergence takes place in $G^{m}_\Sigma((t_0,T)\times G)$. So also part (3) is established.
 \end{proof}

\begin{rem}
Reversing time  and adapting coefficients, data and smallness assumptions accordingly, we can transfer
  the results of Theorem~\ref{thm:lwp} to the negative time direction, cf.\ Remark~3.3 in~\cite{Sp0}.
 \end{rem}

\end{document}